\documentclass{amsart}

\usepackage{amsopn}
\usepackage{amssymb,amscd,amssymb,amsfonts}
\usepackage{amsthm}
\usepackage{anysize}
\usepackage{array}
\usepackage{bbding}
\usepackage{bbm}
\usepackage{booktabs} 
\usepackage{calc}
\usepackage{CJK,fancyhdr,amscd}
\usepackage{color}
\usepackage{dynkin-diagrams}
\usepackage{enumerate}
\usepackage{enumitem}
\usepackage{epsfig,enumerate}
\usepackage{faktor}
\usepackage{float}
\usepackage{geometry}
\usepackage{graphicx, graphics, epsfig}

\usepackage{hyperref}

\usepackage{indentfirst, latexsym}
\usepackage{latexsym}
\usepackage{lie-hasse}
\usepackage[mathcal]{euscript}
\usepackage{mathrsfs,dsfont}
\usepackage{multirow}
\usepackage{pgfplots}
\usepackage{setspace}

\usepackage{tabto}
\usepackage{threeparttable}
\usepackage{tikz}
\usepackage{tikz-cd}
\usepackage{wrapfig}
\usepackage[all,poly,knot]{xy}

\usetikzlibrary{babel}
\usetikzlibrary{positioning}
\newcommand\restr[2]{
  \left.\kern-\nulldelimiterspace
  #1
  \vphantom{\big|}
  \right|_{#2}
}

\newcommand{\nc}{\newcommand}

\nc{\ag}{\mathfrak{a}}
\nc{\bg}{\mathfrak{b}}
\nc{\cg}{\mathfrak{c}}
\nc{\dg}{\mathfrak{d}} 
\nc{\eg}{\mathfrak{e}}
\nc{\fg}{\mathfrak{f}}
\nc{\ggo}{\mathfrak{g}} 
\nc{\hg}{\mathfrak{h}}
\nc{\ig}{\mathfrak{i}}
\nc{\jg}{\mathfrak{j}}
\nc{\kg}{\mathfrak{k}}
\nc{\lgo}{\mathfrak{l}}
\nc{\mg}{\mathfrak{m}}
\nc{\ngo}{\mathfrak{n}}
\nc{\og}{\mathfrak{o}}
\nc{\pg}{\mathfrak{p}}
\nc{\qg}{\mathfrak{q}}
\nc{\rg}{\mathfrak{r}}
\nc{\sg}{\mathfrak{s}}
\nc{\tg}{\mathfrak{t}}
\nc{\ug}{\mathfrak{u}}
\nc{\vg}{\mathfrak{v}}
\nc{\wg}{\mathfrak{w}}
\nc{\xg}{\mathfrak{x}}
\nc{\yg}{\mathfrak{y}}
\nc{\zg}{\mathfrak{z}}

\nc{\afr}{\mathfrak{a}}
\nc{\bfr}{\mathfrak{b}}
\nc{\cfr}{\mathfrak{c}}
\nc{\dfr}{\mathfrak{d}} 
\nc{\efr}{\mathfrak{e}}
\nc{\ffr}{\mathfrak{f}}
\nc{\gfr}{\mathfrak{g}} 
\nc{\hfr}{\mathfrak{h}}
\nc{\ifr}{\mathfrak{i}}
\nc{\jfr}{\mathfrak{j}}
\nc{\kfr}{\mathfrak{k}}
\nc{\lfr}{\mathfrak{l}}
\nc{\mfr}{\mathfrak{m}}
\nc{\nfr}{\mathfrak{n}}
\nc{\ofr}{\mathfrak{o}}
\nc{\pfr}{\mathfrak{p}}
\nc{\qfr}{\mathfrak{q}}
\nc{\rfr}{\mathfrak{r}}
\nc{\sfr}{\mathfrak{s}}
\nc{\tfr}{\mathfrak{t}}
\nc{\ufr}{\mathfrak{u}}
\nc{\vfr}{\mathfrak{v}}
\nc{\wfr}{\mathfrak{w}}
\nc{\xfr}{\mathfrak{x}}
\nc{\yfr}{\mathfrak{y}}
\nc{\zfr}{\mathfrak{z}}

\nc{\Afr}{\mathfrak{A}}
\nc{\Bfr}{\mathfrak{B}}
\nc{\Cfr}{\mathfrak{C}}
\nc{\Dfr}{\mathfrak{D}} 
\nc{\Efr}{\mathfrak{E}}
\nc{\Ffr}{\mathfrak{F}}
\nc{\Gfr}{\mathfrak{G}} 
\nc{\Hfr}{\mathfrak{H}}
\nc{\Ifr}{\mathfrak{I}}
\nc{\Jfr}{\mathfrak{J}}
\nc{\Kfr}{\mathfrak{K}}
\nc{\Lfr}{\mathfrak{L}}
\nc{\Mfr}{\mathfrak{M}}
\nc{\Nfr}{\mathfrak{N}}
\nc{\Ofr}{\mathfrak{O}}
\nc{\Pfr}{\mathfrak{P}}
\nc{\Qfr}{\mathfrak{Q}}
\nc{\Rfr}{\mathfrak{R}}
\nc{\Sfr}{\mathfrak{S}}
\nc{\Tfr}{\mathfrak{T}}
\nc{\Ufr}{\mathfrak{U}}
\nc{\Vfr}{\mathfrak{V}}
\nc{\Wfr}{\mathfrak{W}}
\nc{\Xfr}{\mathfrak{X}}
\nc{\Yfr}{\mathfrak{Y}}
\nc{\Zfr}{\mathfrak{Z}}

\nc{\ggob}{\overline{\mathfrak{g}}} \nc{\sog}{\mathfrak{so}}
\nc{\sug}{\mathfrak{su}}
\nc{\spg}{\mathfrak{sp}}
\nc{\slg}{\mathfrak{sl}}
\nc{\glg}{\mathfrak{gl}}  
\nc{\affg}{\mathfrak{aff}} 

\nc{\aca}{\mathcal{A}}
\nc{\bca}{\mathcal{B}}
\nc{\cca}{\mathcal{C}}
\nc{\dca}{\mathcal{D}}
\nc{\eca}{\mathcal{E}}
\nc{\fca}{\mathcal{F}}
\nc{\gca}{\mathcal{G}}
\nc{\hca}{\mathcal{H}}
\nc{\ica}{\mathcal{I}}
\nc{\jca}{\mathcal{J}}
\nc{\kca}{\mathcal{K}}
\nc{\lca}{\mathcal{L}}
\nc{\mca}{\mathcal{M}}
\nc{\nca}{\mathcal{N}}
\nc{\oca}{\mathcal{O}}
\nc{\pca}{\mathcal{P}}
\nc{\qca}{\mathcal{Q}}
\nc{\rca}{\mathcal{R}}
\nc{\sca}{\mathcal{S}}
\nc{\tca}{\mathcal{T}}
\nc{\uca}{\mathcal{U}}
\nc{\vca}{\mathcal{V}}
\nc{\wca}{\mathcal{W}}
\nc{\xca}{\mathcal{X}}
\nc{\yca}{\mathcal{Y}}
\nc{\zca}{\mathcal{Z}}

\nc{\Amb}{\mathbb{A}}
\nc{\Bmb}{\mathbb{B}}
\nc{\Cmb}{\mathbb{C}}
\nc{\Dmb}{\mathbb{D}}
\nc{\Emb}{\mathbb{E}}
\nc{\Fmb}{\mathbb{F}}
\nc{\Gmb}{\mathbb{G}}
\nc{\Hmb}{\mathbb{H}}
\nc{\Imb}{\mathbb{I}}
\nc{\Jmb}{\mathbb{J}}
\nc{\Kmb}{\mathbb{K}}
\nc{\Lmb}{\mathbb{L}}
\nc{\Mmb}{\mathbb{M}}
\nc{\Nmb}{\mathbb{N}}
\nc{\Omb}{\mathbb{O}}
\nc{\Pmb}{\mathbb{P}}
\nc{\Qmb}{\mathbb{Q}}
\nc{\Rmb}{\mathbb{R}}
\nc{\Smb}{\mathbb{S}}
\nc{\Tmb}{\mathbb{T}}
\nc{\Umb}{\mathbb{U}}
\nc{\Vmb}{\mathbb{V}}
\nc{\Wmb}{\mathbb{W}}
\nc{\Xmb}{\mathbb{X}}
\nc{\Ymb}{\mathbb{Y}}
\nc{\Zmb}{\mathbb{Z}}

\nc{\vp}{\varphi}
\nc{\ddt}{\tfrac{d}{dt}}
\nc{\dsdt}{\tfrac{d^2}{dt^2}}
\nc{\dds}{\frac{d}{ds}}
\nc{\dpar}{\frac{\partial}{\partial t}} \nc{\im}{\mathrm{i}}

\nc{\SO}{\mathrm{SO}}
\nc{\Spe}{\mathrm{Sp}}
\nc{\Sl}{\mathrm{SL}}
\nc{\SU}{\mathrm{SU}}
\nc{\Or}{\mathrm{O}}
\nc{\U}{\mathrm{U}}
\nc{\Gl}{\mathrm{GL}}
\nc{\Se}{\mathrm{S}}
\nc{\Cl}{\mathrm{Cl}}
\nc{\Spin}{\mathrm{Spin}}
\nc{\Pin}{\mathrm{Pin}}
\nc{\G}{\mathrm{GL}_n(\RR)} \nc{\g}{\mathfrak{gl}_n(\RR)}
\nc{\Eg}{\mathrm{E}}
\nc{\Fg}{\mathrm{F}}
\nc{\Gg}{\mathrm{G}}

\nc{\RR}{{\Bbb R}}
\nc{\HH}{{\Bbb H}}
\nc{\CC}{{\Bbb C}}
\nc{\ZZ}{{\Bbb Z}}
\nc{\FF}{{\Bbb F}}
\nc{\NN}{{\Bbb N}}
\nc{\QQ}{{\Bbb Q}}
\nc{\PP}{{\Bbb P}}
\nc{\OO}{{\Bbb O}}

\nc{\vs}{\vspace{.2cm}}
\nc{\vsp}{\vspace{1cm}} \nc{\ip}{\langle\cdot,\cdot\rangle}
\nc{\ipp}{(\cdot,\cdot)}
\nc{\la}{\langle}
\nc{\ra}{\rangle}
\nc{\unm}{\tfrac{1}{2}}
\nc{\unc}{\tfrac{1}{4}}
\nc{\und}{\frac{1}{16}}
\nc{\no}{\vs\noindent}
\nc{\lam}{\rho^2(\RR^n)^*\otimes\RR^n}
\nc{\tangz}{{\rm T}^{\rm Zar}}
\nc{\nor}{{\sf n}}
\nc{\mum}{/\!\!/}
\nc{\kir}{/\!\!/\!\!/}
\nc{\Ri}{\tfrac{4\Ric_{\mu}}{||\mu||^2}} \nc{\ds}{\displaystyle}
\nc{\ben}{\begin{enumerate}}
\nc{\een}{\end{enumerate}}
\nc{\benalp}{\begin{enumerate}[font = \normalfont, label = (\alph*)]}
\nc{\f}{\frac}
\nc{\lb}{[\cdot,\cdot]}
\nc{\isn}{\tfrac{1}{||v||^2}}
\nc{\gkp}{(\ggo=\kg\oplus\pg,\ip)} \nc{\ukh}{(\ug=\kg\oplus\hg,\ip)}
\nc{\tgkp}{(\tilde{\ggo}=\kg\oplus\pg,\ip)}
\nc{\wt}{\widetilde}
\nc{\iop}{\mathtt{i}}
\nc{\jop}{\mathtt{j}} 
\nc{\Hk}{H_{\kil}}
\nc{\gk}{g_{\kil}}

\nc{\abel}{\operatorname{ab}}
\nc{\ad}{\operatorname{ad}}
\nc{\Ad}{\operatorname{Ad}}
\nc{\Adj}{\operatorname{Adj}}
\nc{\Aff}{\operatorname{Aff}}
\nc{\Aut}{\operatorname{Aut}}
\nc{\cas}{\operatorname{C}}
\nc{\CH}{\operatorname{CH}}
\nc{\CCone}{\operatorname{CC}}
\nc{\Cone}{{\mathcal C}}
\nc{\CP}{{\mathcal P}}
\nc{\Cric}{\operatorname{P}}
\nc{\CRic}{\operatorname{PP}}
\nc{\Crit}{\operatorname{Crit}}
\nc{\Der}{\operatorname{Der}}
\nc{\dete}{\operatorname{det}}
\nc{\Diag}{\operatorname{Dg}} \nc{\Diagg}{\operatorname{Diag}}
\nc{\dif}{\operatorname{d}}
\nc{\Diff}{\operatorname{Diff}}
\nc{\E}{\operatorname{E}}
\nc{\End}{\operatorname{End}}
\nc{\grad}{\operatorname{grad}}
\nc{\herm}{\operatorname{herm}}
\nc{\Hess}{\operatorname{Hess}}
\nc{\id}{\operatorname{id}}
\nc{\Ima}{\operatorname{Im}}
\nc{\inj}{\operatorname{inj}}
\nc{\Inn}{\operatorname{Inn}}
\nc{\Irr}{\operatorname{Irr}}
\nc{\Iso}{\operatorname{Iso}}
\nc{\Ker}{\operatorname{Ker}}
\nc{\kil}{\operatorname{B}}
\nc{\Le}{\operatorname{L}}
\nc{\level}{\operatorname{level}}
\nc{\lic}{\operatorname{L}}
\nc{\Lie}{\operatorname{L}}
\nc{\mcc}{\operatorname{mcc}} 
\nc{\mm}{\operatorname{m}}
\nc{\Mm}{\operatorname{M}}
\nc{\modu}{\operatorname{mod}}
\nc{\Nor}{\operatorname{Norm}}
\nc{\Order}{\operatorname{O}}
\nc{\proy}{\operatorname{pr}}
\nc{\rad}{\operatorname{r}}
\nc{\rank}{\operatorname{rk}}
\nc{\Rea}{\operatorname{Re}}
\nc{\ricac}{\operatorname{Rc^{ac}}}
\nc{\Ricac}{\operatorname{Ric^{ac}}}
\nc{\ricc}{\operatorname{Rc^{c}}}
\nc{\Ricc}{\operatorname{Ric^{c}}}
\nc{\ricci}{\operatorname{Rc}}
\nc{\Ricci}{\operatorname{Ric}}
\nc{\riccig}{\operatorname{ric^{\gamma}}}
\nc{\Riem}{\operatorname{Rm}}
\nc{\scalar}{\operatorname{Sc}}
\nc{\Sec}{\operatorname{Sec}}
\nc{\sen}{\operatorname{sen}}
\nc{\spann}{\operatorname{span}}
\nc{\Spec}{\operatorname{Spec}}
\nc{\sym}{\operatorname{sym}}
\nc{\symac}{\operatorname{sym^{ac}}}
\nc{\symc}{\operatorname{sym^{c}}}
\nc{\tang}{\operatorname{T}}
\nc{\tr}{\operatorname{tr}}
\nc{\val}{\operatorname{val}}
\nc{\vol}{\operatorname{vol}} 

\nc{\DynkinF}{	\begin{figure}[h]
		\centering
		\begin{tikzpicture}[scale = 1, baseline={(0,-0.1)}]
			\tikzstyle{every node}=[circle, draw, inner sep=1.5pt];
			
			\node (a3) at (0,0) {};
			\node (a4) at (1,0) {};
			\node (a5) at (2,0) {};
			\node (a6) at (3,0) {};
			
			\node[draw = none] at ($(a3.south)+(0,-8pt)$) {$\al_1$};
			\node[draw = none] at ($(a4.south)+(0,-8pt)$) {$\al_2$};
			\node[draw = none] at ($(a5.south)+(0,-8pt)$) {$\al_3$};
			\node[draw = none] at ($(a6.south)+(0,-8pt)$) {$\al_4$};
			
			\draw (a3)--(a4) (a5)--(a6);
			\draw[double distance=2pt] (a4) -- (a5);
			\draw[double distance=2pt,-implies] (a4)-- ($(a5)!0.4!(a4)$);
		\end{tikzpicture}
\end{figure}}

\newcommand{\DynF}[4]{
	\begin{tikzpicture}[scale = 1, baseline={(0,-0.1)}]
		\tikzstyle{dynkinNode}=[circle, draw, inner sep=1.5pt];
		
		\node[dynkinNode, fill=#1] (a1) at (0,0) {};

		\node[dynkinNode, fill=#2] (a2) at (1,0) {};

		\node[dynkinNode, fill=#3] (a3) at (2,0) {};

		\node[dynkinNode, fill=#4] (a4) at (3,0) {};
		
		\node[draw = none] at ($(a1.south)+(0,-8pt)$) {\small $\al_1$};
		\node[draw = none] at ($(a2.south)+(0,-8pt)$) {\small $\al_2$};
		\node[draw = none] at ($(a3.south)+(0,-8pt)$) {\small $\al_3$};
		\node[draw = none] at ($(a4.south)+(0,-8pt)$) {\small $\al_4$};
		
		\draw (a1)--(a2) (a3)--(a4);
		\draw[double distance=2pt] (a2) -- (a3);
		\draw[double distance=2pt,-implies] (a2)-- ($(a3)!0.4!(a2)$);
	\end{tikzpicture}
}

\nc{\DynG}[2]{\begin{tikzpicture}[scale = 1, baseline={(0,-0.1)}]
		\tikzstyle{dynkinNode}=[circle, draw, inner sep=1.5pt];
		
		\node[dynkinNode, fill=#1] (a1) at (0,0) {};

		\node[dynkinNode, fill=#2] (a2) at (1,0) {};
		
		\node[draw = none] at ($(a1.south)+(0,-8pt)$) {\small $\al_1$};
		\node[draw = none] at ($(a2.south)+(0,-8pt)$) {\small $\al_2$};
			
		\draw[double distance=2pt] (a1) -- (a2);
		\draw[double distance=2pt, -implies] (a1)--($(a2)!0.4!(a1)$);
		\draw (a1)--(a2);	
		\end{tikzpicture}
}

\nc{\col}[1]{\mathcal{C}_{#1}}
\nc{\tri}[1]{\mathcal{T}_{#1}}

\nc{\al}{\alpha}

\nc{\seq}{\subseteq}
\nc{\sneq}{\subsetneq}

\renewcommand{\arraystretch}{1.3}

\theoremstyle{plain}
\newtheorem{theorem}{Theorem}[section]

\newtheorem{lemma}[theorem]{Lemma}

\theoremstyle{definition}
\newtheorem{definition}[theorem]{Definition}

\newtheorem{conjecture}[theorem]{Conjecture}

\theoremstyle{remark}
\newtheorem{remark}[theorem]{Remark}

\title[Parallel Bismut torsion and isotropy representation]{The parallel Bismut torsion condition and the isotropy representation of flag manifolds}

\author{Martiniano Faure}
\email{martiniano.faure@mi.unc.edu.ar}  

\address{FAMAF, Universidad Nacional de C\'ordoba and CIEM-CONICET, Av. Medina Allende s/n, Ciudad Universitaria, X5000HUA C\'ordoba (Argentina)}
\keywords{Isotropy representation, flag manifolds, parallel Bismut torsion, compact simple Lie groups}
\subjclass[2020]{53C55, 22E60, 32M10}

\thanks{We would like to acknowledge support from the ICTP (Italy) through the Associates Programme and from the Simons Foundation through grant number 284558FY19, and support from the Consejo Interuniversitario Nacional (Argentina) through an undergraduate research fellowship.}

\date{\today}

\begin{document}

\maketitle

\begin{abstract}
	A classification of all invariant Hermitian structures with parallel Bismut torsion on most flag manifolds is given.  More precisely, we prove that on a flag manifold $M=G/H$ such that $G$ is simple and different from $\Eg_6, \Eg_7, \Eg_8$ if $H$ is non-abelian, a $G$-invariant Hermitian structure on $M$ has parallel Bismut torsion if and only if the metric is either K\"ahler or a multiple of the Killing or standard metric. 
\end{abstract}

\section{Introduction}\label{Sec1}

On any Hermitian manifold, there exists a unique Hermitian connection $\nabla^B$ with skew-symmetric torsion called the {\it Bismut connection} (or {\it Strominger}).  The geometry of Hermitian manifolds with {\it parallel Bismut torsion}, i.e., $\nabla^BT^B=0$ ({\it BTP} for short) has attracted considerable attention in the last years (see \cite{ZZ1, ZZ2} and the references therein).    

The study of the BTP condition in the compact homogeneous case was initiated in \cite{PodZ}, where the following results were obtained:
\begin{enumerate}[font = \normalfont, label = (\alph*)]
	\item \cite[Theorem 1.3]{PodZ} Any $G$-invariant BTP metric on a flag manifold $G/H$ with $G$ a compact simple Lie group and two isotropy summands is either K\"ahler or {\it standard} (i.e., the restriction of $-\kil_\ggo$, where $\kil_\ggo$ is the Killing form of $\ggo$) up to scaling.  Furthermore, the same holds for the full flag $M=\SU(n+1)/T^n$ (see \cite[Theorem 1.4]{PodZ}).  
	
	\item \cite[Theorem 1.5]{PodZ} On a given compact simple Lie group $G$ with a maximal torus $T$, a left and $\Ad(T)$-invariant metric $g$ is BTP if and only if up to scaling, $g$ coincides with the restriction of $-\kil_\ggo$ on the orthogonal complement of $\tg$.  
\end{enumerate}

It is conjectured in \cite[Conjecture 1.6]{PodZ} that part (a) holds for any flag manifold. In this paper, our main objective is to confirm the validity of the previous conjecture for most flag manifolds.   

\begin{theorem}\label{thm}
	Let $M=G/H$ be a flag manifold such that $G$ is simple and different from $\Eg_6, \Eg_7, \Eg_8$ if $H$ is non-abelian.  Then a $G$-invariant Hermitian structure on $M$ is BTP if and only if the metric is either K\"ahler or a multiple of the standard metric. 
\end{theorem}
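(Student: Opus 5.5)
We will work with the standard algebraic description of invariant Hermitian structures on a flag manifold $M=G/H$. Let $\mg=\pg_1\oplus\dots\oplus\pg_r$ be the decomposition of the isotropy representation into pairwise inequivalent irreducible $\Ad(H)$-modules. These correspond to the $T$-roots, i.e.\ restrictions to the center $\tg$ of $\hg$ of the roots of $\ggo$ not in $H$. A $G$-invariant metric is then $g=\sum x_i(-\kil_\ggo)|_{\pg_i}$ with $x_i>0$. An invariant complex structure $J$ corresponds to a choice of positive $T$-roots, and after a Weyl group action we may assume it comes from a positive system of roots of $\ggo$. Following \cite{PodZ}, the Bismut torsion is $T^B(X,Y,Z)=-d\omega(JX,JY,JZ)$ and $\nabla^B=\nabla^{LC}+\tfrac12 T^B$. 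On a reductive homogeneous space, $\nabla^BT^B=0$ becomes a system of polynomial identities in the $x_i$, indexed by triples and quadruples of root vectors. The coefficients involve the structure constants $N_{\al,\beta}$ of a Chevalley basis.

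The first step is to write these identities explicitly. For root vectors $E_\al,E_\beta,E_\gamma$ with $\al+\beta+\gamma=0$ lying in $\mg$, the torsion component is, up to a universal factor, $N_{\al,\beta}\,(x_\al\pm x_\beta\pm x_\gamma)$. The signs are fixed by which of the roots are $J$-positive. In particular, for an ``ordered'' triple $\al+\beta=\gamma$ of positive $T$-roots the component is proportional to $x_\al+x_\beta-x_\gamma$, and it vanishes for all such triples exactly when $g$ is K\"ahler. The BTP equations are therefore quadratic relations. We will extract from them a key local lemma. Take two $T$-roots $\al,\beta$ such that $\al+\beta$ is a $T$-root and there is a fourth $T$-root $\delta$ interacting with the triple. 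The lemma should say that either $x_\al+x_\beta=x_{\al+\beta}$, or $x_\al=x_\beta=x_{\al+\beta}$. This is the dichotomy ``Kähler on this triple'' versus ``standard on this triple''. In \cite{PodZ} such relations appear for the two-summand case and for $\SU(n+1)/T^n$. We will derive them in general, using only the rank-two subsystems ($A_2$, $B_2$, $G_2$) spanned by the roots involved.

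The second step is to propagate this local dichotomy globally. We form a graph on the set of $T$-roots, joining two $T$-roots when their sum or difference is a $T$-root. Using connectedness of the $T$-root system, which holds because $G$ is simple, we show that a triple of Kähler type cannot share a root with a triple of standard type. If it did, the mixed equations from the quadruple terms would force some $x_i=0$ or a contradiction. Hence either every triple is Kähler type, giving a Kähler metric, or every triple is standard type, giving all $x_i$ equal, i.e.\ a multiple of the standard metric. The converse is known: Kähler metrics have $T^B=0$, and the standard metric is BTP because its torsion is, up to sign, the $\ggo$-bracket projected to $\mg$, which is $\nabla^B$-parallel. This case can be cited from \cite{PodZ}.

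The main obstacle is the second step for general $H$. When $H$ is non-abelian, the modules $\pg_i$ have dimension greater than one, and many roots restrict to the same $T$-root. Making the quadruple equations nondegenerate then needs specific nonvanishing sums of products $N_{\al,\beta}N_{\gamma,\delta}$. I would first handle $H=T$ (full flags) directly with the Chevalley basis. For non-abelian $H$ in the classical types and in $\Fg_4$, $\Gg_2$, I would use the explicit description of $T$-root systems via painted Dynkin diagrams and check the required configurations case by case. This casework is presumably why $\Eg_6,\Eg_7,\Eg_8$ with non-abelian $H$ are excluded.
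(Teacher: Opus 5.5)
Your overall shape matches the paper: a local dichotomy, then propagation, with the converse taken from \cite{PodZ}. The genuine gap is the second step, which carries essentially all of the content and has no working mechanism in your proposal. You assert that a K\"ahler-type triple cannot share a root with a standard-type triple because \emph{the mixed equations from the quadruple terms would force some $x_i=0$}, but you do not derive it. An equation $(\Lambda^B(E_\mu)\cdot T^B)(E_\nu,E_\rho)=0$ only involves $x$'s of roots formed from $\mu,\nu,\rho$ by partial sums. So two triples sharing a root are tied together only when they sit inside a specific root configuration. Connectedness of the ($T$-)root graph, which is all you extract from simplicity of $G$, does not supply such configurations. The contradiction you have in mind does exist, in a precise situation. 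Take $\alpha,\beta,\gamma$ with $\alpha+\beta,\beta+\gamma,\alpha+\beta+\gamma\in\Delta_{\mathfrak q}^+$ and $\alpha+\gamma\notin\Delta$. Then the identity \eqref{eq1}, the four pairwise dichotomies and positivity force the six coefficients to be either all additive or all equal. This is Lemma~\ref{MYLEMMA} in the $A_3$ case. It extends by induction to the triangle of any $A_n$-like set. It also has a Jacobi-identity variant, Lemma~\ref{MyLemmaForCn}, for the case where $\alpha+\gamma$ is also a root and $\gamma-\beta\in\Delta_{\mathfrak h}$.

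The global step then needs two further ingredients that your proposal lacks. First, the K\"ahler/Killing property can only be transferred between sets sharing a whole triple $\alpha,\beta,\alpha+\beta$ (Lemma~\ref{MyLemmaOverlap}), not merely a root. Second, you must exhibit, type by type, a family of such sets whose overlaps form a connected chain and whose union meets every isotropy class. The paper uses triangles of $A_k$-like sets and sub-root systems $\Delta'$ of type $B_{n-1}$, $C_{n-1}$, $D_{n-1}$ handled by induction on rank. These are completed by \emph{reflected} triangles such as $\{\beta,\alpha_n,\dots,\alpha_2\}$. For non-abelian $H$, the simple roots are replaced by sums of consecutive simple roots running through the black nodes, so that the triangles consist of complementary roots and hit every class. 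For $G_2$, $F_4$ and $E_n/T^n$ the coverings are explicit lists. Deferring all of this to an unspecified case-by-case check means your proposal describes the shape of the proof rather than giving one. Finally, drop the extra \emph{fourth root $\delta$} hypothesis in your local lemma. The dichotomy holds for every triple $\alpha,\beta,\alpha+\beta\in\Delta_{\mathfrak q}^+$: by Lemma~\ref{Lemma1PodZ}(a) when $\alpha-\beta\notin\Delta_{\mathfrak q}$, and by iterating \eqref{eq1} along the $\beta$-string through $\alpha$ otherwise. A weaker version would leave some triples unconstrained and break any propagation argument.
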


A {\it flag manifold} is a homogeneous space $F=G/H$, where $G$ is a compact and connected semisimple Lie group and $H$ is the centralizer in $G$ of some torus of $G$.  As is well known, flag manifolds are the only compact manifolds with finite fundamental group admitting a homogeneous symplectic structure, or a homogeneous K\"ahler metric. If for a maximal torus $T\seq H\seq G$ we consider the corresponding root system $\Delta \seq (\tg^\Cmb)^*$ of $\ggo^\Cmb:=\ggo\otimes\CC$, then the $\kil_\ggo$-orthogonal reductive decomposition $\ggo=\hg\oplus\qg$ of $F=G/H$, $T_oF\equiv \qg$, can be further decomposed as  
\begin{equation}\label{hq}
	\hg^\Cmb=\tg^\Cmb\oplus\bigoplus_{\alpha\in\Delta_\hg}\ggo_\alpha, \qquad 
	\qg^\Cmb=\bigoplus_{\alpha\in\Delta_\qg}\ggo_\alpha, \qquad \Delta=\Delta_\hg\sqcup\Delta_\qg,
\end{equation}
where $\Delta_\hg$ is the root system of the semisimple Lie algebra $[\hg,\hg]$ and $\Delta_\qg$ is the set of {\it complementary roots}.  The decomposition $\qg=\qg_1\oplus\dots\oplus\qg_r$ of the isotropy representation in irreducible components is determined by the coincidence of the restrictions of the roots to the center of $\hg$.  

After some preliminaries in Section \ref{Sec2}, we compute in Section \ref{Sec3} the number $r$ of irreducible components of the isotropy representation of any flag manifold whenever $G$ is different from $\Eg_6$, $\Eg_7$ and $\Eg_8$. These formulas for $r$, which are given in Tables \ref{tab:classical} and \ref{tab:expectF4andG2}, may be of independent interest in the study of other geometric properties of flag manifolds. This problem was already addressed by Alekseevsky in \cite{Alek}, for a large family of flag manifolds. 

The proof of Theorem \ref{thm} is worked out in Section \ref{Sec4} \footnote{It is worth mentioning that this section continues the research developed in the author's master's thesis (Universidad Nacional de Córdoba, Córdoba, Argentina; 2026)}. We first show that any BTP metric $g=(x_\alpha)_{\alpha\in\Delta_\qg^+}$ (i.e., $g|_{\qg_\alpha}=x_\alpha(-\kil_\ggo)|_{\qg_\alpha}$, $x_\alpha>0$, where $\qg_\alpha^\Cmb=\ggo_\alpha\oplus\ggo_{-\alpha}$) satisfies the following combinatorial property: 
$$
\forall \alpha,\beta,\alpha+\beta\in\Delta_\qg^+, \quad\mbox{either}\quad 
x_{\alpha+\beta}=x_\alpha+x_\beta \quad\mbox{or}\quad x_{\alpha+\beta}=x_\alpha=x_\beta.  
$$
The rest of the proof consists in showing that this local property is necessarily global, via a case-by-case analysis among the different types of simple Lie algebras.

\section{Preliminaries}\label{Sec2}

\subsection{Root systems}\label{Sec2,1}

Let $(M, J)$ be a complex manifold. We say that $(M,J)$ is \textit{K\"ahlerian} if there exists some Riemannian metric $g$ on $M$ such that $(M, g, J)$ is a K\"ahler manifold. The object of study in this article will be $G$-homogeneous compact K\"ahlerian manifolds $(M, J)$, where $G$ is a Lie group, $J$ is a $G$-invariant complex structure and $M$ has finite fundamental group. If such a manifold admits an invariant K\"ahler metric, then it is called a \textit{flag manifold}.

\begin{theorem}\cite{FlagKähler}
	Suppose that $(M, J)$ is a $G$-invariant complex manifold. Then $(M, J)$ is a flag manifold if and only if $M = G/H$, where $G$ is a compact semisimple Lie group and $H = C_G(T')$ is the centralizer of a torus $T' < G$.
\end{theorem}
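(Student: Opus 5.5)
The plan is to treat the two implications separately. Both rest on the correspondence between closed $G$-invariant $2$-forms on $G/H$ and elements of the center of $\hg$, given by $\omega_Z(X,Y)=\kil_\ggo(Z,[X,Y])$, which is the Kirillov--Kostant--Souriau form on the adjoint orbit of $Z$.

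\emph{($\Leftarrow$)} Suppose $H=C_G(T')$ with $G$ compact semisimple.
\begin{enumerate}
\item Pick $Z$ in the Lie algebra $\tg'$ of $T'$ generic, i.e.\ outside the finitely many hyperplanes $\ker\alpha$, $\alpha\in\Delta$, that do not contain $\tg'$. Then $\hg=\zg_\ggo(Z)$, and $H=C_G(T')$ is the centralizer of the closure of $\exp(\RR Z)$. It is connected, being the centralizer of a torus in a compact connected group.
\item $G$-invariant complex structures on $G/H$ correspond to $\Ad(H)$-invariant integrable $J$ on $\qg$. These are given by a choice of "positive" complementary roots $\Delta_\qg^+$ closed under addition within $\Delta_\qg$, i.e.\ $\qg^{1,0}=\bigoplus_{\alpha\in\Delta_\qg^+}\ggo_\alpha$, a parabolic nilradical. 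I will quote this standard description.
\item Given such a $J$, choose $Z$ in the center of $\hg$ with $-\im\alpha(Z)>0$ for every $\alpha\in\Delta_\qg^+$. This is possible because $\Delta_\qg^+$ comes from a parabolic subalgebra, so it is cut out by a linear functional on the center.
\item Then $\omega_Z$ is closed, $G$-invariant and nondegenerate on $\qg$, since $\zg_\ggo(Z)=\hg$. Moreover $\omega_Z(\cdot,J\cdot)$ is positive definite: on $\qg_\alpha$ it is a positive multiple of $-\im\alpha(Z)(-\kil_\ggo)$. Hence it is an invariant Kähler metric.
\item Finally, $\pi_1(G/H)$ is a quotient of $\pi_1(G)$, because $H$ is connected, and $\pi_1(G)$ is finite since $G$ is compact semisimple.
\end{enumerate}

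\emph{($\Rightarrow$)} Suppose $(M,J)$ is $G$-homogeneous, compact, Kähler, with finite $\pi_1$. The first step is to reduce to $G$ compact semisimple acting almost effectively.
\begin{enumerate}
\item By the Borel--Remmert / Matsushima structure theorem, a compact homogeneous Kähler manifold is a product of a complex torus and a flag manifold. Finite $\pi_1$ kills the torus factor.
\item Averaging the Kähler form over a maximal compact subgroup of the (compact) automorphism group, we may assume $G$ is compact and $\omega$ is $G$-invariant. Passing to the commutator subgroup, which is still transitive since the center acts trivially once $\pi_1$ is finite, we may take $G$ semisimple.
\item Now pull $\omega$ back to a closed left-invariant $2$-form on $\ggo$ vanishing on $\hg$. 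Since $H^2(\ggo)=0$ for semisimple $\ggo$, we get $\omega=d\theta$ with $\theta=\kil_\ggo(Z,\cdot)$.
\item Invariance gives $Z$ fixed by $\Ad(H)$. Thus $\omega(X,Y)=\pm\kil_\ggo(Z,[X,Y])$ with $Z$ in the center of $\hg$.
\item Nondegeneracy on $\qg$ forces $\zg_\ggo(Z)=\hg$. Hence $H^0$ is the centralizer of the torus $T'=\overline{\exp(\RR Z)}$.
\item It remains to show $H$ is connected. Since $G/H^0\to G/H$ is a finite covering, $C_G(T')\supseteq H\supseteq H^0=C_G(T')$ once we note that $C_G(T')$ is connected; this uses $H\subseteq N_G(\hg)$ together with $\Ad(H)Z=Z$.
\end{enumerate}

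The main obstacle is the reduction step in ($\Rightarrow$): showing that the transitive group can be taken compact semisimple with the Kähler form invariant. This needs the Borel--Remmert / Matsushima theorem as an external input. The cohomological argument ($H^2(\ggo)=0$ giving $\omega=d\theta$) is what then makes the rest formal.
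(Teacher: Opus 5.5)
The paper does not prove this statement; it quotes it from the literature, so your argument has to stand on its own. Its core is the classical proof and is sound:
\begin{itemize}
\item Whitehead's lemma ($H^1(\ggo)=H^2(\ggo)=0$) writes the pulled-back invariant form as $\kil_\ggo(Z,[\cdot,\cdot])$. Here $\theta=\kil_\ggo(Z,\cdot)$ is unique, hence $\Ad(H)$-fixed.
\item Nondegeneracy gives $\zg_\ggo(Z)=\hg$. This is also what puts $Z$ inside $\hg$, so ``center of $\hg$'' in your step 4 really belongs after step 5.
\item $\Ad(H)Z=Z$ alone gives $H\subseteq C_G(T')=H^0$, so the covering and normalizer remarks in step 6 are superfluous.
\end{itemize}
This half only uses an invariant symplectic form. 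The converse, with $Z$ read off from the parabolic attached to $J$, is also fine up to sign conventions.

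The weak point is exactly the reduction step you flag, where two of your justifications are false.

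\textbf{Compactness.} $\operatorname{Aut}(M,J)$ is not compact in general: it is $PGL_{n+1}(\CC)$ for $\CC P^n$. Averaging over a compact subgroup only helps if that subgroup is still transitive, and that requires a separate result (Montgomery's theorem). With the paper's definition you need no averaging at all. The Kähler metric is already $G$-invariant, so let $K$ be the closure of the image of $G^0$ in $\operatorname{Isom}(M,g)$. Then $K$ is compact, connected and transitive, and it preserves $J$ and $\omega$.

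\textbf{The center.} Finite $\pi_1$ does not force the central torus to act trivially. Take $K=\SU(2)\times\U(1)$ and $H=\{(\mathrm{diag}(z,\bar z),z)\}$. Then $K/H\cong S^3$ is simply connected, yet the $\U(1)$ factor acts on it by the free Hopf action.

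What is true, and what you actually need, is that $K'=[K,K]$ is transitive. Since $K'$ is normal, $M\to K/K'H$ is a bundle over a torus with connected fibre $K'H/H$. Hence $\pi_1(M)\to\pi_1(K/K'H)$ is onto, and finiteness of $\pi_1(M)$ forces $K=K'H$. Alternatively, use the symplectic structure: $b_1(M)=0$, so central fundamental fields are Hamiltonian. They therefore vanish at a critical point of the Hamiltonian, and hence everywhere by $K$-equivariance.

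\textbf{Borel--Remmert/Matsushima.} Drop this appeal. Nothing later uses it, and since its flag factor is already of the form $G/C_G(T')$, it essentially contains the statement being proved.

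\textbf{Reading of the statement.} Some reduction of this kind is unavoidable. The statement cannot hold literally for the original $G$: for example $\SU(2)\times\RR$, with $\RR$ acting trivially, acts transitively on $\CC P^1$ by holomorphic isometries. So the theorem must be read as ``$M=G'/H$ for some compact semisimple $G'$''.
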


\begin{remark}
	In general, the torus $T'$ may not be maximal. Hence $H$ may not be abelian. In case that $T'$ is a maximal torus, then $T' = H$ and the quotient $M = G/T'$ is called a \textit{full flag manifold}.
\end{remark}

From here onwards we will always assume $G$ to be a compact semisimple Lie group and $H$ to be the centralizer of some torus $T' < G$. The homogeneous space $G/H$ is then reductive. Let $T$ be a maximal torus of $G$ such that $T' \seq T \seq H$. The inclusion of the real Lie algebras $\tfr \seq \hfr \seq \gfr$ induces \textit{root spaces} decompositions for the complex Lie algebras $\gfr^\Cmb$ and $\hfr^\Cmb$,
$$\gfr^\Cmb = \tfr^\Cmb \oplus \bigoplus_{\al \in \Delta} \gfr_{\al}, \hspace{5mm} \hfr^\Cmb = \tfr^\Cmb \oplus \bigoplus_{\al \in \Delta_{\hfr}} \gfr_{\al},$$
where $\Delta \seq (\tfr^\Cmb)^* - \{ 0\}$ is the root system of $\gfr$ and $\Delta_\hfr := \{ \al \in \Delta \colon \al\rvert_{\zfr(\hfr)} \equiv 0 \}$ is the root system of $[\hfr, \hfr]$. The root system is then partitioned as $\Delta = \Delta_{\hfr} \sqcup \Delta_{\qfr}$, where $\Delta_{\qfr}$ is the set of \textit{complementary roots}.

For any $\al \in \Delta$, the \textit{root space} $\gfr_{\al} = \{ E \in \gfr^{\Cmb} \colon [h, E] = \al(h) E, \hspace{1mm} \forall h \in \tfr^\Cmb \}$ is a common eigenspace for the commuting family of lineal operators $\ad(\tfr^\Cmb)$ on $\gfr^\Cmb$. These complex subspaces are 1-dimensional and they satisfy that
$$
[\gfr_\al,\gfr_\beta]
\left\{\begin{array}{ll}
	= \gfr_{\al+\beta}, &\qquad \al+\beta\in\Delta, \\
	\seq \tfr^{\Cmb}, &\qquad \al+\beta = 0, \\
	= 0, &\qquad \text{otherwise}. 
\end{array}\right.
$$
An \textit{ordering} for $\Delta$ is a partition $\Delta = \Delta^+ \sqcup \Delta^{-}$ such that $(\Delta^+ + \Delta^+)\cap \Delta \seq \Delta^+$, where $\Delta^- := - \Delta^+$. Given an ordering $\Delta^+$ for $\Delta$, we consider a \textit{basis} $\Pi \seq \Delta^+$ for $\Delta$, i.e., a basis of $(\tfr^\Cmb)^*$ such that every $\beta \in \Delta^+$ may be written as a $\Nmb_0$-linear combination of the elements in $\Pi$. We denote $\Delta_{\qfr}^+ := \Delta_{\qfr} \cap \Delta^+$ and $\Delta_{\hfr}^+ := \Delta_{\hfr} \cap \Delta^+$.

Since the Killing form $\kil_{\gfr^{\Cmb}}$ is non-degenerate on $\tfr^{\Cmb} \times \tfr^{\Cmb}$, we obtain an identification $\tfr^{\Cmb} \equiv (\tfr^{\Cmb})^*$. Therefore, for each $\al \in (\tfr^\Cmb)^*$, there exists a unique vector $H_\al \in \tfr^\Cmb$ such that $\al = \kil_{\gfr^{\Cmb}}(H_\al, \cdot)$. There also exist elements $E_\al \in \gfr_\al$ such that
$$\kil_{\gfr^{\Cmb}}(E_\al, E_{-\al})=1, \quad [E_\al, E_{-\al}] = H_\al.$$
Then for any basis $\Pi \seq \Delta$ for the root system, the set $\{ H_\al \colon \al \in \Pi\} \cup \{ E_\al \colon \al \in \Delta\}$ is a basis for $\gfr^{\Cmb}$. We denote the structure constants in this basis by $N_{\al, \beta}$, which are given by 
$$[E_\al, E_\beta] = N_{\al, \beta} E_{\al, \beta}$$
for any $\al, \beta \in \Delta$ such that $\al+\beta \neq 0$. Note that $N_{\al, \beta} = 0$ if $\al+\beta \notin \Delta$ and $\alpha+\beta \neq 0$.

Given $\al \in \Delta$, we define $\qfr_\al := (\gfr_{\al} \oplus \gfr_{-\al}) \cap \gfr$, which is a real form for $\ggo_{\al}\oplus \ggo_{-\al}$, i.e., $\qfr_\al^{\Cmb} = \gfr_{\al}\oplus \gfr_{-\al}$. The set $\{ e_{\al}, f_{\al}\}$ is a basis for $\qfr_\al$, where $e_{\al} := \tfrac{1}{\sqrt{2}}(E_{\al} - E_{-\al})$ and $f_\al := \tfrac{i}{\sqrt{2}}(E_{\al} + E_{-\al})$. Also $\gfr = \hfr \oplus \qfr$ is a reductive decomposition for $\gfr$, where $\qfr := \bigoplus_{\al \in \Delta_\qfr^+} \qfr_\al$.

Consider the following relation on $\Delta_{\qfr}$
$$\al \sim \beta \text{ if and only if } \al\rvert_{\zfr(\hfr)} = \beta\rvert_{\zfr(\hfr)}.$$
It is easy to check that $\sim$ is an equivalence relation. For each $\beta \in \Delta_{\qfr}^+$, we denote its equivalence class by $P_\beta$. Equivalently, given the corresponding basis $\Pi \seq \Delta^+$, define $\Pi_{\hfr} := \Pi \cap \Delta_{\hfr}$ and $\Pi_{\qfr} := \Pi \cap \Delta_{\qfr}$. Observe that $\Pi_{\hfr}$ is a basis for $\Delta_\hfr$. Every root $\beta \in \Delta_{\qfr}^+$ may be written as a lineal combination of the form
$$\beta = \sum_{\al \in \Pi_{\hfr}} m_{\beta, \al} \al + \sum_{\al \in \Pi_{\qfr}} n_{\beta, \al}\al,$$
where $m_{\beta, \al}$, $n_{\beta, \al} \in \Nmb_0$. Then the equivalence class of $\beta$ can be described as
$$P_\beta = \{ \gamma \in \Delta_{\qfr}^+ \colon n_{\gamma, \al} = n_{\beta, \al}, \hspace{1mm} \forall \al \in \Pi_\qfr\}.$$

Let $P_1, \dots, P_r$ be the distinct equivalence classes for the relation $\sim$ in $\Delta_\qfr^+$. For $1 \leq i \leq r$, define $\qfr_i := \bigoplus_{\al \in P_i} \qfr_\al$. Then $\qfr = \qfr_1 \oplus \dots \oplus \qfr_r$ is a decomposition of $\qfr$ in irreducible $\Ad(H)$-invariant subspaces, which are pairwise inequivalent as $\Ad(H)$-representations. We will denote the amount of such subspaces by $r := r(G/H)$.

\subsection{Homogeneous Hermitian geometry}\label{Sec2,2}
We now turn to the study of Riemannian metrics and almost complex structures on the homogeneous compact manifold $G/H$. Let $o := eH$ be the \textit{origin} in $G/H$. Note that the smooth left action $G \curvearrowright G/H$ gives rise to a $\Rmb$-lineal map $\hat{\theta} : \gfr \rightarrow T_o(G/H)$, $\hat{\theta}(X) := X_o^*$, where $X^* \in \Xfr(M)$ is defined by
$$X_{aH}^* := \tfrac{d}{dt}\rvert_{t = 0} (exp(tX) \cdot aH)$$
for all $a \in G$. Observe that $X_o^* = 0$ if and only if $X \in \hfr$. Therefore $\hat{\theta}$ induces an identification $\qfr \equiv T_o(G/H)$. From this identification, a description of almost complex structures on $G/H$ can be obtained.

\begin{theorem}\label{CompInvRed}\cite[Chapter X, Proposition 6.5]{Nom2}
	The $G$-invariant almost complex structures on $G/H$ are in one to one correspondance with the $\Ad(H)$-invariant almost complex structures on $\qfr$.
\end{theorem}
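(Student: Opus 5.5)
The statement is the classical correspondence of Kobayashi--Nomizu, and I would prove it by building the two maps explicitly and checking that they are mutually inverse. Let $\pi\colon G\to G/H$ be the projection and $\tau_a\colon G/H\to G/H$ the left translation $bH\mapsto abH$. Through $\hat\theta$ we identify $\qfr\equiv T_o(G/H)$. For $h\in H$ we have $\tau_h(o)=o$, and
$$d(\tau_h)_o\bigl(X_o^*\bigr)=\tfrac{d}{dt}\big|_{t=0}\, h\exp(tX)h^{-1}\cdot o=(\Ad(h)X)^*_o .$$
Because $\Ad(h)$ preserves both $\hfr$ and $\qfr$, this shows that under $\hat\theta$ the isotropy action $h\mapsto d(\tau_h)_o$ becomes $\Ad(h)|_\qfr$. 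I would record this as a preliminary lemma, since both directions depend on it.

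In one direction, let $J$ be a $G$-invariant almost complex structure, so $d\tau_a\circ J=J\circ d\tau_a$ for every $a\in G$. Set $J_o:=\hat\theta^{-1}\circ J_o\circ\hat\theta$ on $\qfr$. Clearly $J_o^2=-\id$. Invariance under $\tau_h$ for $h\in H$, combined with the lemma, gives $J_o\Ad(h)=\Ad(h)J_o$, so $J_o$ is $\Ad(H)$-invariant. In the other direction, take an $\Ad(H)$-invariant $J_0$ on $\qfr$ with $J_0^2=-\id$, and define $J_{aH}:=d(\tau_a)_o\circ J_0\circ d(\tau_a)_o^{-1}$. For this to be well defined it must not depend on the representative: replacing $a$ by $ah$ gives $d(\tau_{ah})_o=d(\tau_a)_o\, d(\tau_h)_o$, and the factor $d(\tau_h)_o$ commutes with $J_0$ by the lemma and the hypothesis. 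The condition $J^2=-\id$ holds pointwise, and $G$-invariance follows from $\tau_b\tau_a=\tau_{ba}$. Each construction undoes the other: restricting the extended structure to $o$ returns $J_0$, and an invariant $J$ is determined by its value at $o$, because $G$ acts transitively.

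The step needing the most care is smoothness of the extended $J$. I would use a local section. Since $\pi$ is a submersion, around each point $a_0H$ there is a smooth local section $s\colon U\to G$ (for example, $s(\exp(X)a_0H)=\exp(X)a_0$ for $X$ in a small neighborhood of $0$ in $\Ad(a_0)\qfr$). On $U$ we then have $J_p=d(\tau_{s(p)})_o\circ J_0\circ d(\tau_{s(p)})_o^{-1}$. The action map $G\times G/H\to G/H$ is smooth, so $(a,v)\mapsto d\tau_a(v)$ is smooth on $G\times T(G/H)$, and therefore $J$ is a smooth tensor field on $U$. The only other point needing care is the lemma above, which is what converts the abstract isotropy action into $\Ad(H)$ acting on $\qfr$.
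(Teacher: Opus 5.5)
Your proposal is correct. It is the standard argument behind the cited Kobayashi--Nomizu result, which the paper quotes without proof: an invariant tensor field is determined by its value at $o$, that value must be invariant under the linear isotropy representation, and your computation $d(\tau_h)_o(X^*_o)=(\Ad(h)X)^*_o$ identifies this representation with $\Ad(H)|_{\qfr}$. The only blemishes are cosmetic: you reuse the symbol $J_o$ on both sides of the pull-back to $\qfr$, and the smoothness step reads more cleanly if you note that $d(\tau_{s(p)})_o^{-1}=d(\tau_{s(p)^{-1}})_p$.
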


For flag manifolds, any $G$-invariant almost complex structure $J$ is of the form $Je_\al = \epsilon_{\al}f_\al$, $Jf_{\al} = -\epsilon_{\al}e_{\al}$, for some coefficients $\epsilon_{\al}$ satisfying $\epsilon_{\al} = \pm 1$, $\epsilon_{-\al} = -\epsilon_{\al}$ and $\epsilon_{\al} = \epsilon_{\beta}$ if $\al \sim \beta$, for any $\al, \beta \in \Delta_{\qfr}$. Additionally, $J$ is integrable if and only if the set $P(J) := \{ \al \in \Delta_\qfr \colon \epsilon_{\al} = 1 \}$ satisfies that $P(J) = \Delta_\qfr \cap \Delta^+$ for some ordering $\Delta^+$ for $\Delta$ (see \cite[Lemma 12.4]{BorHir}). Then on every flag manifold, there exist only finite $G$-invariant (almost) complex structures, which are in one to one correspondence with the possible orderings $\Delta^+$ for $\Delta$. In general however, they are not necessarily pairwise biholomorphic. 

\begin{theorem}
	Given two $G$-invariant complex structures $J_1$, $J_2$ on a flag manifold $G/H$, the following statements are equivalent:
	\begin{enumerate}[font = \normalfont, label = (\alph*)]
			 \item There exists a biholomorphism between the complex manifolds $(G/H,J_1)$, $(G/H,J_2)$.
			 \item There exists some $\psi \in \mathrm{Aut}(\Delta)$ such that $\psi(\Delta_\hfr) = \Delta_\hfr$ and $\psi( P_1) = P_2$, where $P_i := P(J_i)$ \cite[Theorem 13.8]{BorHir}.
			 \item There exists some $\psi \in \mathrm{Aut}(\Delta)$ such that $\psi(\Pi_1) = \Pi_2$ and $\psi(\Pi_\hfr) = \Pi_\hfr$, where $\Pi_i$ is the basis for the ordering $\Delta^+_i \seq \Delta$ such that $\Delta^+_i \cap \Delta_\qfr = P(J_i)$ \cite[Theorems 2, 4]{Nishi}.
		\end{enumerate}
\end{theorem}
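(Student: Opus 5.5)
Since the three statements are already established in \cite{BorHir} and \cite{Nishi}, the plan is mainly to assemble the arguments in the root-theoretic language of Section \ref{Sec2,1}. I would prove the cycle (c)$\Rightarrow$(b)$\Rightarrow$(c), then (b)$\Rightarrow$(a), and finally (a)$\Rightarrow$(b).

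\emph{Preliminary normalization.} For each $J_i$, the set $P_i=P(J_i)$ equals $\Delta_\qfr\cap\Delta'^+$ for some ordering. Because $\hfr^\Cmb\oplus\bigoplus_{\al\in P_i}\gfr_\al$ is a parabolic subalgebra, the set $\Delta_i^+:=\Delta_\hfr^+\cup P_i$ is closed and is itself an ordering. Its basis $\Pi_i$ therefore contains the fixed basis $\Pi_\hfr$ of $\Delta_\hfr$. I would work with these orderings throughout.

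\emph{(c)$\Rightarrow$(b).} Suppose $\psi(\Pi_1)=\Pi_2$ and $\psi(\Pi_\hfr)=\Pi_\hfr$. Taking $\Nmb_0$-combinations gives $\psi(\Delta_1^+)=\Delta_2^+$ and $\psi(\Delta_\hfr)=\Delta_\hfr$. Since $\psi$ is a bijection of $\Delta$, it also gives $\psi(\Delta_\qfr)=\Delta_\qfr$. Intersecting, $\psi(P_1)=P_2$.

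\emph{(b)$\Rightarrow$(c).} Now $\psi(\Delta_\hfr)=\Delta_\hfr$, but $\psi(\Delta_\hfr^+)$ need not equal $\Delta_\hfr^+$. Choose $w$ in the Weyl group $W_H$ of $\Delta_\hfr$ with $w\psi(\Delta_\hfr^+)=\Delta_\hfr^+$. The group $W_H$ is generated by reflections in roots vanishing on $\zfr(\hfr)$, so it fixes $\zfr(\hfr)$ pointwise. Hence $W_H$ preserves each class $P_\beta$ and in particular preserves $P_2$. Then $\psi':=w\psi$ satisfies $\psi'(\Delta_1^+)=\Delta_2^+$, so $\psi'(\Pi_1)=\Pi_2$, and $\psi'(\Pi_\hfr)=\Pi_\hfr$.

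\emph{(b)$\Rightarrow$(a).} By the isomorphism theorem, $\psi$ lifts to an automorphism $\Phi$ of $\gfr^\Cmb$ mapping each $\gfr_\al$ to $\gfr_{\psi(\al)}$. After rescaling the $E_\al$ by a Chevalley-type normalization, $\Phi$ commutes with the conjugation defining $\gfr$. Then $\Phi$ preserves $\gfr$, $\tfr$, $\hfr$ (as $\psi(\Delta_\hfr)=\Delta_\hfr$) and $\qfr$. I would integrate $\Phi$ to an automorphism $\varphi$ of the universal cover of $G$, which may be used since $H$ contains the center. This gives $\varphi(H)=H$ and a well-defined diffeomorphism $gH\mapsto\varphi(g)H$. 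Its differential at $o$ is $\Phi|_\qfr$, which carries $\bigoplus_{\al\in P_1}\gfr_\al$ onto $\bigoplus_{\al\in P_2}\gfr_\al$, i.e.\ the $(1,0)$-space of $J_1$ to that of $J_2$. By $G$-equivariance, the map intertwines $J_1$ and $J_2$ everywhere, so it is a biholomorphism.

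\emph{(a)$\Rightarrow$(b), the main obstacle.} An abstract biholomorphism $F$ need not be $G$-equivariant. The plan is to realize $(G/H,J_i)$ as $G^\Cmb/Q_i$ with $Q_i$ the parabolic subgroup with Lie algebra $\hfr^\Cmb\oplus\bigoplus_{\al\in -P_i}\gfr_\al$. Then $F$ conjugates the connected automorphism groups, $A:=\Aut^0(G/H,J_1)\cong\Aut^0(G/H,J_2)$, which is a complex semisimple group containing $G^\Cmb$. The difficulty is the exceptional cases in which $A$ is strictly larger than $G^\Cmb$ (for instance odd-dimensional projective spaces viewed as $\mathrm{Sp}$-quotients). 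Here I would follow Borel--Hirzebruch and Onishchik. First, show that $F$ may be composed with an element of $A$ so that it normalizes a fixed compact form containing $G$, hence normalizes $G$ itself. This reduces to conjugacy of maximal compact subgroups, together with the fact that the images of $G$ in $A$ are conjugate; the latter I expect to need the classification of inclusions $G^\Cmb\subsetneq A$. Next, compose further with an element of $G$ so that $F$ fixes $o$ and normalizes $T$. Conjugation by $F$ is then an automorphism of $\gfr$ preserving $\tfr$ and $\hfr$. It induces $\psi\in\Aut(\Delta)$ with $\psi(\Delta_\hfr)=\Delta_\hfr$. Because $dF_o$ is complex linear, it maps the $(1,0)$-space $\bigoplus_{P_1}\gfr_\al$ to $\bigoplus_{P_2}\gfr_\al$, which gives $\psi(P_1)=P_2$.
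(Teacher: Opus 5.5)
The paper gives no proof of this statement; it only cites Borel--Hirzebruch and Nishiyama. Your arguments for (c)$\Rightarrow$(b), (b)$\Rightarrow$(c) and (b)$\Rightarrow$(a) are correct. Your normalization $\Delta_i^+=\Delta_\hfr^+\cup P_i$ is the right one: closedness follows from $P_i$ being a union of $\sim$-classes, and it forces $\Pi_\hfr\seq\Pi_i$, which is what makes (c) meaningful.

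The gap is in (a)$\Rightarrow$(b), at the step you flag yourself. Normalizing a maximal compact subgroup $K\supsetneq G$ of $A=\Aut^0$ gives no information about $G$. What you need is that $FGF^{-1}$ is $A$-conjugate to $G$, and for $A=\Aut^0$ this is not merely unproven but false in general. For $(B_3,\{\al_3\})$ one has $(M,J)\cong Q^6$ and $A=\mathrm{PSO}(8,\Cmb)$, while $\Aut(M,J)=\mathrm{PO}(8,\Cmb)$ is disconnected. An automorphism $F$ in the other component induces an outer automorphism of $\Spin(8)$ exchanging the vector representation with one half-spin representation. It therefore carries $G=\Spin(7)$ to another transitive $\Spin(7)$ that is not $A$-conjugate to $G$: the vector representation restricts to $7\oplus 1$ on one and irreducibly on the other. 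For such $F$, no composition with elements of $A$ makes it normalize $G$, so your method produces no $\psi$, even though (b) does hold there.

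The observation you are missing makes the classification of inclusions $G^\Cmb\subsetneq A$ unnecessary. All exceptional pairs in Onishchik's list, $(C_n,\{\al_1\})$, $(B_n,\{\al_n\})$ and $(G_2,\{\text{short root}\})$, have $b=|\Pi_\qfr|=1$.
\begin{itemize}
\item By your own normalization, the basis of $\Delta_\hfr^+\cup P(J)$ is then $\Pi_\hfr\cup\{\gamma\}$.
\item Since $(\tfr^\Cmb)^*/\spann\,\Pi_\hfr$ is one-dimensional, $P(J)$ is the set of complementary roots whose class there is a positive multiple of $[\gamma]$, and $[\gamma]$ is a nonzero multiple of $[\al_{i_1}]$.
\item Hence $P(J)=\pm(\Delta_\qfr\cap\Delta^+)$, and the only invariant complex structures are $\pm J$.
\item So (b) holds for every pair with $\psi=\pm\id$, since $-\id\in\Aut(\Delta)$ preserves $\Delta_\hfr$ and sends $P(J)$ to $P(-J)=-P(J)$.
\end{itemize}
Thus (a)$\Rightarrow$(b) is only needed when $A\cong G^\Cmb/Z$. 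There $G/Z$ is a maximal compact subgroup, and $FGF^{-1}$ is a connected compact subgroup of the same dimension, hence $A$-conjugate to it. The rest of your argument (moving $o$ and $T$, then reading off $\psi$ from the complex-linear $dF_o$) then goes through. This settles the case $G$ simple; for semisimple $G$ you would still have to handle products containing exceptional factors.
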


In this article, a \textit{complex flag manifold} will always be a flag manifold $G/H$ endowed with a $G$-invariant complex structure $J$. The complex flag manifold $(G/H, J)$ will be said to be a \textit{coomplex full flag manifold} if $G/H$ is a full flag manifold.

We now turn to the description and parametrization of the Riemannian metrics on $G/H$.

\begin{theorem}\label{MétInvRed}\cite[Chapter X, Corollary 3.2]{Nom2}
	The $G$-invariant Riemannian metrics on $G/H$ are in one to one correspondance with the $\Ad(H)$-invariant inner products on $\mathfrak{q}$.
\end{theorem}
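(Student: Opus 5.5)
The plan is to pass between tensors at the origin and $G$-invariant tensors on $G/H$ using the left translations $\tau_a\colon G/H\to G/H$, $\tau_a(bH)=abH$, together with the identification $\hat\theta\colon\qfr\to T_o(G/H)$ from Section \ref{Sec2,2}. The argument has three steps: identify the isotropy action with $\Ad(H)$, show restriction to $o$ lands in $\Ad(H)$-invariant inner products, and show every such inner product extends uniquely.

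\textbf{Step 1: the isotropy action is $\Ad(H)|_\qfr$.} I would first check that for $h\in H$ the differential $d(\tau_h)_o$ corresponds to $\Ad(h)|_\qfr$ under $\hat\theta$. Since $hH=H$, for $X\in\qfr$ we have
$$
\tau_h(\exp(tX)H)=h\exp(tX)h^{-1}H=\exp(t\Ad(h)X)H .
$$
Differentiating at $t=0$ gives $d(\tau_h)_o\hat\theta(X)=\hat\theta(\Ad(h)X)$. This uses that $\qfr$ is $\Ad(H)$-invariant, which holds because $\gfr=\hfr\oplus\qfr$ is a reductive decomposition (here $\qfr$ is the $\kil_\gfr$-orthogonal complement of $\hfr$).

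\textbf{Step 2: restriction to the origin.} Given a $G$-invariant metric $g$, set $\langle X,Y\rangle:=g_o(\hat\theta X,\hat\theta Y)$. Each $\tau_h$ with $h\in H$ fixes $o$ and is an isometry, so Step 1 shows that $\langle\cdot,\cdot\rangle$ is $\Ad(H)$-invariant. The map $g\mapsto\langle\cdot,\cdot\rangle$ is injective, because transitivity of $G$ and invariance give
$$
g_{aH}(u,v)=g_o\bigl(d(\tau_a^{-1})u,\,d(\tau_a^{-1})v\bigr),
$$
so $g$ is determined by its value at $o$.

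\textbf{Step 3: extension of an invariant inner product.} Conversely, given an $\Ad(H)$-invariant inner product on $\qfr$, I would use the formula above as the definition of $g_{aH}$. To see it is well defined, replace $a$ by $ah$: then $\tau_{ah}^{-1}=\tau_h^{-1}\circ\tau_a^{-1}$, and $d(\tau_h^{-1})_o$ acts on $\qfr$ as $\Ad(h^{-1})$, which preserves the inner product. $G$-invariance of the resulting $g$ is immediate from the definition.

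The remaining point, and the main technical obstacle, is smoothness of $g$. I would use local smooth sections $\sigma\colon U\to G$ of the principal bundle $G\to G/H$. On $U$, $g_p(u,v)=\langle d(\tau_{\sigma(p)}^{-1})u,\,d(\tau_{\sigma(p)}^{-1})v\rangle$, and the map $(p,u)\mapsto d(\tau_{\sigma(p)}^{-1})u$ is smooth because the action $G\times G/H\to G/H$ is smooth. The two constructions are mutually inverse, which gives the bijection.
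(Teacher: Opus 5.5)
Your proof is correct. It is the standard argument behind the cited result of Kobayashi--Nomizu (Ch.~X, Cor.~3.2): identify the linear isotropy action at $o$ with $\Ad(H)|_\qfr$ via $\hat\theta$, restrict an invariant metric to the origin, and conversely extend an $\Ad(H)$-invariant inner product by translations, getting smoothness from local sections. The paper gives no proof of its own and simply quotes this result.
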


Let $g$ be a $G$-invariant Riemannian metric on the flag manifold $G/H$ and let $g_\Cmb : \qfr^{\Cmb} \times \qfr^{\Cmb} \rightarrow \Cmb$ be the complexification of $g$, which is a non-degenerate simetric $\Cmb$-bilineal tensor on $\qfr^\Cmb$. If $T \seq H$ is a maximal torus of $G$, then the inner product $g$ on $\qfr$ is $\Ad(T)$-invariant. Hence
$$(\al(H)+\beta(H)) \hspace{1mm} g_\mathbb{C}(E_\al, E_\beta) = 0$$
for all $\al, \beta \in \Delta_\qfr$ and $H \in \tfr^\Cmb$. This implies that $\gfr_\al$ and $\gfr_\beta$ are $g_\Cmb$-orthogonal whenever $\al+\beta \neq 0$. Given an ordering $\Delta^+ \seq \Delta$, the inner product $g : \qfr \times \qfr \rightarrow \Rmb$ may be written as
$$g = \sum_{\al \in \Delta_\qfr^+} x_{\al} (-\kil_{\gfr})\rvert_{\mathfrak{q}_{\al}},$$
where the coefficients $x_{\al} := g(e_{\al}, e_\al) > 0$ satisfy the relations $x_\al = x_\beta$ whenever $\al \sim \beta$, since the metric is $\Ad(H)$-invariant (see \cite[Chapter 7, \S 6]{Arv}). It holds that any $G$-invariant Riemannian metric $g \equiv (x_{\al})_{\al \in \Delta_\qfr^+}$ on $G/H$ is almost Hermitian with respect to any $G$-invariant almost complex structure $J \equiv (\epsilon_{\al})_{\al \in \Delta_\qfr}$. Among the $G$-invariant Riemannian metrics, it is worth highlighting the \textit{standard metric}, which is given by $g_o := -\kil_{\gfr}$, i.e., $x_\al = 1$ for all $\al \in \Delta_{\qfr}^+$.  Additionally, we have the following characterization for Kähler metric on the complex flag manifold $(G/H, J)$.

\begin{theorem}\cite[Proposition 7.7]{Arv}
	The $G$-invariant Riemannian metric $g \equiv (x_\al)_{\al \in \Delta_{\qfr}^+}$ is a K\"ahler metric if and only if
	$x_{\al+ \beta} = x_{\al} + x_{\beta}$
	for all $\al, \beta, \al+\beta \in \Delta_{\qfr}^+$.
\end{theorem}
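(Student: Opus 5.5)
The plan is to characterise the K\"ahler condition through the closedness of the fundamental $2$-form, and then reduce $d\omega=0$ to a root-by-root identity. Since $J$ is assumed integrable, $g$ is K\"ahler if and only if $\omega(X,Y):=g(JX,Y)$ is closed. Both $\omega$ and $d\omega$ are $G$-invariant, so it suffices to compute $d\omega$ at the origin $o$ on $\qfr\equiv T_o(G/H)$. For invariant forms on a reductive homogeneous space one has the standard formula
$$
d\omega(X,Y,Z)=-\omega([X,Y]_\qfr,Z)+\omega([X,Z]_\qfr,Y)-\omega([Y,Z]_\qfr,X),\qquad X,Y,Z\in\qfr,
$$
where $[\cdot,\cdot]_\qfr$ denotes the $\qfr$-component of the bracket. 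I will complexify and evaluate $d\omega$ on triples $E_\al,E_\beta,E_\gamma$ with $\al,\beta,\gamma\in\Delta_\qfr$.

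First I compute $\omega_\Cmb$ in the basis $\{E_\al\}$. From $Je_\al=\epsilon_\al f_\al$ and the definitions of $e_\al,f_\al$, one gets $JE_\al=i\epsilon_\al E_\al$. Together with the $\Ad(T)$-invariance computation preceding the statement, this gives
$$
\omega_\Cmb(E_\al,E_\beta)=0 \quad\text{unless } \al+\beta=0,\qquad \omega_\Cmb(E_\al,E_{-\al})=i\,\epsilon_\al\,x_\al,
$$
using $g_\Cmb(E_\al,E_{-\al})=x_\al\,\kil_{\gfr^\Cmb}(E_\al,E_{-\al})$ up to the sign fixed by $-\kil_\gfr$. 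The overall constant and sign are irrelevant for the final condition. By $T$-invariance, $d\omega(E_\al,E_\beta,E_\gamma)$ vanishes unless $\al+\beta+\gamma=0$.

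Now take $\al,\beta,\gamma\in\Delta_\qfr$ with $\al+\beta+\gamma=0$. Each term in the formula for $d\omega$ is a multiple of a structure constant: for example, $\omega([E_\al,E_\beta]_\qfr,E_\gamma)=N_{\al,\beta}\,\omega_\Cmb(E_{-\gamma},E_\gamma)$. Here the bracket lands in $\ggo_{-\gamma}\subseteq\qfr^\Cmb$, so the projection is harmless. I will then invoke the classical Chevalley relation $N_{\al,\beta}=N_{\beta,\gamma}=N_{\gamma,\al}\neq0$ whenever $\al+\beta+\gamma=0$. This collapses $d\omega(E_\al,E_\beta,E_\gamma)$ to a nonzero multiple of
$$
\epsilon_\al x_\al+\epsilon_\beta x_\beta+\epsilon_\gamma x_\gamma .
$$
Keeping track of the signs in the three terms and of the antisymmetry of $\omega$ is the main bookkeeping obstacle. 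I expect it to come out as exactly this symmetric sum, since $d\omega$ is alternating and the expression must be invariant under permuting the triple.

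Finally I reduce to positive roots. Any zero-sum triple in $\Delta_\qfr$ has two roots of the same sign. Replacing the triple by its negative only changes the value by conjugation (as $\epsilon_{-\al}=-\epsilon_\al$ and $x_{-\al}=x_\al$), so we may assume $\al,\beta\in\Delta_\qfr^+$ and $\gamma=-(\al+\beta)$. Integrability gives $P(J)=\Delta_\qfr\cap\Delta^+$, so $\epsilon_\al=\epsilon_\beta=1$ and $\epsilon_\gamma=-1$. The condition then becomes $x_\al+x_\beta-x_{\al+\beta}=0$.

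Triples with a root in $\Delta_\hfr$ never occur, because $d\omega$ is only evaluated on vectors of $\qfr$. Hence $d\omega=0$ if and only if $x_{\al+\beta}=x_\al+x_\beta$ for all $\al,\beta,\al+\beta\in\Delta_\qfr^+$, which is the claimed characterisation.
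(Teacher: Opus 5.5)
The paper does not prove this statement; it quotes it from \cite[Proposition 7.7]{Arv}, so there is no argument in the text to compare yours against. Your proposal is a correct, self-contained proof, and it is the standard one: by integrability of $J$, reduce to $d\omega=0$, evaluate the invariant formula for $d\omega$ on root vectors, use the relation among structure constants, and pass to positive roots.

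The sign bookkeeping you were unsure about does come out symmetric. With the paper's conventions you have $\omega_\Cmb(E_\al,E_{-\al})=-i\epsilon_\al x_\al$. Write $N:=N_{\al,\beta}=N_{\beta,\gamma}=N_{\gamma,\al}$, so that $N_{\al,\gamma}=-N$. Then your formula gives, for $\al+\beta+\gamma=0$,
\[
d\omega(E_\al,E_\beta,E_\gamma)=-i\,N\,(\epsilon_\al x_\al+\epsilon_\beta x_\beta+\epsilon_\gamma x_\gamma).
\]

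One point should be stated precisely, namely the justification of $N_{\al,\beta}=N_{\beta,\gamma}=N_{\gamma,\al}$. In the paper's normalization $\kil_{\gfr^\Cmb}(E_\al,E_{-\al})=1$, it follows directly from $\ad$-invariance of the Killing form:
\[
N_{\al,\beta}=\kil_{\gfr^\Cmb}([E_\al,E_\beta],E_\gamma)=\kil_{\gfr^\Cmb}(E_\al,[E_\beta,E_\gamma])=N_{\beta,\gamma}.
\]
This requires no special choice of Weyl basis. In a Chevalley-basis normalization the same identity would acquire root-length factors, so it should be cited in this form rather than as the ``Chevalley relation.''
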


\subsection{$G$-invariant connections}\label{Sec2,3}
A connection $\nabla$ on a Hermitian manifold $(M,g,J)$ is said to be $\emph{Hermitian}$ if both the Riemannian metric and the complex structure are $\nabla$-parallel, i.e., $\nabla g = 0$, $\nabla J = 0$. Given a Hermitian structure $(g,J)$ on $M$, it is well known that there exists a mono-parametric family of Hermitian connections. This curve of connections was first studied by Gauduchon in \cite{Gaudu} and is given by
$$g(\nabla^t_X Y, Z) = g(\nabla^{g}_X Y, Z) - \tfrac{t-1}{4}d\omega(JX,JY,JZ) - \tfrac{t+1}{4}d\omega(JX,Y,Z)$$
for all $X, Y, Z \in \Xfr(M)$, where $\nabla^g$ is the Levi-Civita connection and $\omega$ is the \textit{fundamental 2-form} associated to $(g,J)$, i.e., $\omega := g(J\cdot, \cdot)$. These are called the \textit{canonical connections}. The \textit{Chern connection} $\nabla^1 =: \nabla^C$ and the \textit{Bismut connection} $\nabla^{-1} =: \nabla^B$ are of particular interest in the field of Hermitian geometry. The Bismut connection is the unique Hermitian connection on $(M,g,J)$ with totally skew-symmetric torsion, i.e., $\theta(X,Y,Z) := g(T^B(X,Y),Z)$) is a 3-form.

For flag manifolds $M = G/H$, if $(g,J)$ is a $G$-invariant Hermitian structure on $G/H$, then the canonical connections are all $G$-invariant. Recall that if $\nabla$ is such a connection, then it is completely characterized by the corresponding \textit{Nomizu operator} $\Lambda: \qfr \times \qfr \rightarrow \qfr$, $$\Lambda(X, Y) := \Lambda(X)Y := (\nabla_{X^*}Y^*)_{o} + [X,Y]_{\qfr} = (\nabla_{X^*}Y^* - [X^*, Y^*])_{o}$$ for $X,Y \in \qfr$ (see \cite[Chapter X]{Nom2}). We consider the action of the Nomizu operator on the $G$-invariant tensors $\varphi$ on $G/H$, given by
$(\Lambda(X) \cdot \varphi) := (\nabla_{X^*} \varphi)$
for $X \in \qfr$. Note that $\varphi$ is $\nabla$-parallel if and only if $\Lambda(X) \cdot \varphi \equiv 0$ for all $X \in \qfr$. Since both $g$ and $J$ are $\nabla$-parallel, we deduce that $\Lambda(X) : \qfr \rightarrow \qfr$ is a skew-Hermitian linear operator with respect to $g$ for all $X \in \qfr$. In terms of the Nomizu operator, the torsion and curvature tensors may be expressed as
\begin{align*}
	T(X,Y) &\equiv \Lambda(X)Y - \Lambda(Y)X - [X,Y]_\qfr, \\
	R(X,Y) &\equiv \Lambda(X)\circ \Lambda(Y) - \Lambda(Y)\circ \Lambda(X) - \Lambda([X,Y]_\qfr) - \ad([X,Y]_\hfr)\rvert_{\qfr}
\end{align*}
for all $X,Y \in \qfr$, where $[\cdot, \cdot]_\qfr$ and $[\cdot, \cdot]_\hfr$ denote the projections of the Lie bracket over $\qfr$ and $\hfr$, respectively. The actions of $\Lambda$ on $T$ and $R$ are given by
\begin{align*}
	(\Lambda(X) \cdot T)(Y,Z) &= \Lambda(X)(T(Y,Z)) - T(\Lambda(X)Y,Z) - T(Y, \Lambda(X)Z), \\
	(\Lambda(X) \cdot R)(Y,Z) &= [\Lambda(X), R(Y,Z)]  - R(\Lambda(X)Y,Z) -R(Y, \Lambda(X)Z) 
\end{align*}
for all $X,Y,Z,W \in \qfr$.

\section{The isotropy representation of a flag manifold}\label{Sec3}

In this section, we will derive explicit formulas for the number of irreducible $\Ad(H)$-invariant subspaces for the isotropy representation of a flag manifold $G/H$, usually denoted by $r = r(G/H)$, where $G$ is a compact simple Lie group. Recall that those subspaces are in one to one correspondence with the equivalence classes in $\Delta_\qfr^+$ under the relation $\al \sim \beta$ if and only if $\al\rvert_{\zfr(\hfr)} = \beta\rvert_{\zfr(\hfr)}$. Note that on any equivalence class there necessarily exist a highest root and a lowest root, both unique.

From here onwards, we will denote the root system of $\gfr = \mathrm{Lie}(G)$ by $\Delta$ and assume that $\Pi := \{ \al_1, \dots, \al_n\}$ is a basis for $\Delta$. Let $\Pi_\qfr := \Pi \cap \Delta_\qfr$ be the complementary simple roots and $\Pi_{\hfr} := \Pi \cap \Delta_\hfr$ be the corresponding basis for $\Delta_\hfr$. We will use the notation $b := \lvert \Pi_\qfr \rvert$ for the number of complementary simple roots and we will always consider $\Pi_\qfr = \{\al_{i_1}, \dots, \al_{i_b}\}$ with $1 \leq i_1 < \cdots < i_b \leq n$. A diagram may be constructed from the Dynkin diagram of $\Delta$ by painting the nodes of the roots in $\Pi_\hfr$ black and leaving the nodes of the roots in $\Pi_\qfr$ white. Said graph is called a \textit{painted Dynkin diagram}. It is a well known fact that there is a one to one correspondence between the diffeomorphism classes of flag manifolds and the painted Dynkin diagrams. (see \cite{Alek})

In the following theorems, we will make use of the descriptions of the positive roots for the classical systems in terms of the simple ones. Those may be found in \cite[p. 250-258]{Bourbaki}.

\begin{theorem}\label{FormulaAn}
	Suppose that $\Delta$ is of $A_n$ type. Then $r = \tbinom{b+1}{2}$.
\end{theorem}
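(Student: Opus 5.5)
We will use the standard description of the positive roots of $A_n$ as $\al_i+\al_{i+1}+\cdots+\al_j$ for $1\le i\le j\le n$, together with the description of the equivalence classes $P_\beta$ from Section \ref{Sec2,1}. Recall that two roots of $\Delta_\qfr^+$ are equivalent exactly when their coefficient vectors on $\Pi_\qfr$ coincide. So $r$ equals the number of distinct vectors $(n_{\beta,\al})_{\al\in\Pi_\qfr}$ that occur as $\beta$ ranges over $\Delta_\qfr^+$. In type $A_n$ every coefficient $n_{\beta,\al}$ is $0$ or $1$. The root $\beta=\al_i+\dots+\al_j$ has coefficient $1$ precisely on the complementary simple roots $\al_{i_k}$ with $i\le i_k\le j$.

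The key step is to identify the possible coefficient vectors. The set $\{k : i\le i_k\le j\}$ is always a set of consecutive indices $\{k, k+1,\dots,l\}$ with $1\le k\le l\le b$. It is nonempty exactly when $\beta\in\Delta_\qfr^+$, since $\beta\in\Delta_\hfr$ if and only if all its simple roots lie in $\Pi_\hfr$. Conversely, every such interval $\{k,\dots,l\}$ is realized, for instance by $\beta=\al_{i_k}+\al_{i_k+1}+\dots+\al_{i_l}$. Distinct intervals give distinct coefficient vectors and hence distinct classes.

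It follows that the classes are in bijection with the pairs $1\le k\le l\le b$. Their number is $\binom{b}{2}+b=\binom{b+1}{2}$, which is the formula in the statement.

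No step is a real obstacle. The only point needing care is checking that the realized vectors are exactly the characteristic vectors of intervals. This follows from the fact that a positive root of $A_n$ has connected support in the Dynkin diagram, so its support meets $\Pi_\qfr$ in a consecutive block of the $\al_{i_k}$. Every such block is hit by the root spanning from $\al_{i_k}$ to $\al_{i_l}$.
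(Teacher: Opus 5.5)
Your proposal is correct and follows essentially the same argument as the paper: the classes in $\Delta_\qfr^+$ correspond to consecutive blocks of complementary simple roots, represented by $\alpha_{i_k}+\alpha_{i_k+1}+\cdots+\alpha_{i_\ell}$, and there are $\tbinom{b+1}{2}$ pairs $1\le k\le \ell\le b$. The only difference is phrasing: the paper sets up the bijection through the lowest root of each class, while you use the coefficient vectors on $\Pi_\qfr$, which makes it slightly more explicit that distinct pairs $(k,\ell)$ give distinct classes.
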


\begin{proof}
	\begin{figure}[h]
		\centering
		\begin{tikzpicture}[scale = 1, baseline={(0,-0.1)}]
			\tikzstyle{every node}=[circle, draw, inner sep=1.5pt];
			
			\node (a1) at (0,0) {};
			\node (a2) at (1,0) {};
			\node[draw = none] (dots) at (1.5,0) {$\cdots$};
			\node (a3) at (2,0) {};
			\node (a4) at (3,0) {};
			
			\node[draw = none] at ($(a1.south)+(0,-8pt)$) {$\al_{1}$};
			\node[draw = none] at ($(a2.south)+(0,-8pt)$) {$\al_{2}$};
			\node[below=2pt of dots, draw=none] {};
			\node[draw = none] at ($(a3.south)+(0,-8pt)$) {$\al_{n-1}$};
			\node[draw = none] at ($(a4.south)+(0,-8pt)$) {$\al_{n}$};
			
			\draw (a1)--(a2) (a3)--(a4);
		\end{tikzpicture}
		\caption{Dynkin diagram of $A_n$ type}\label{FigureAn}
	\end{figure}
	Fix $n \in \Nmb$. The positive roots $\al \in \Delta^+$ are given by
	$$\al= \sum\limits_{s = i}^{j} \al_s$$
	for some $1 \leq i \leq j \leq n$. Given a root $\al \in \Delta_\qfr^+$ with $\al= \sum\limits_{s = i}^{j} \al_s$, note that there exist unique indexes $i \leq i_k \leq i_\ell \leq j$ such that
	$$\al \sim \beta_{k ,\ell} := \sum\limits_{s = i_k }^{i_\ell} \al_s.$$
	Said $\beta_{k, \ell} \in \Delta_{\qfr}^+$ is the lowest root in the equivalence class of $\al$. Hence the equivalence classes in $\Delta_\qfr^+$ under $\sim$ are in one to one correspondance with the pairs $(k, \ell)$ where $1 \leq k \leq \ell \leq b$. We conclude that $r = \tbinom{b+1}{2}$.
\end{proof}

\begin{theorem}\label{FormulaBn}
	Suppose that $\Delta$ is of $B_n$ type and denote $c := \lvert \{ i \colon \al_i \in \Pi_\hfr, \al_{i+1} \in \Pi_\qfr \} \rvert$. Then $r = b^2 + c$.
\end{theorem}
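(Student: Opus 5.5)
The plan is to imitate the proof of Theorem \ref{FormulaAn}. By the description of the classes $P_\beta$ in Section \ref{Sec2,1}, each equivalence class in $\Delta_\qfr^+$ is determined by its coefficient vector $(n_{\beta,\al_{i_1}},\dots,n_{\beta,\al_{i_b}})$. So $r$ is the number of distinct such vectors realized by positive roots, and I will count these vectors directly.

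I use the standard realization $\al_s=e_s-e_{s+1}$ for $s<n$ and $\al_n=e_n$. The positive roots of $B_n$ are then the following.
\begin{itemize}
\item $e_i-e_j=\sum_{s=i}^{j-1}\al_s$ for $i<j\le n$.
\item $e_i=\sum_{s=i}^{n}\al_s$.
\item $e_i+e_j=\sum_{s=i}^{j-1}\al_s+2\sum_{s=j}^n\al_s$ for $i<j$.
\end{itemize}
Restricting to the complementary indices $i_1<\dots<i_b$ gives two families of nonzero vectors.
\begin{itemize}
\item \emph{0/1 vectors.} These come from the first two kinds of roots. They have entries $1$ exactly on a consecutive block of positions $k,\dots,\ell$ with $1\le k\le \ell\le b$.
\item \emph{Vectors with a $2$.} These come from $e_i+e_j$. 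They have entries $1$ on positions $k,\dots,\ell-1$ and $2$ on positions $\ell,\dots,b$, where $k\le \ell\le b$. Here $k$ is the first position with $i_k\ge i$ and $\ell$ the first with $i_\ell\ge j$. A root $e_i+e_j$ with no complementary index $\ge j$ yields a 0/1 vector and falls into the first family.
\end{itemize}

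First I check that every consecutive block $k\le\ell$ of the first family is realized. Take $e_{i_k}-e_{i_\ell+1}$ if $i_\ell<n$, and $e_{i_k}$ if $i_\ell=n$. This gives $\binom{b+1}{2}$ classes, exactly as in type $A$. The short roots $e_i$ contribute nothing new, since their vectors are 0/1 blocks ending at $b$.

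For the second family, every pair $k<\ell\le b$ is realized by $e_{i_k}+e_{i_\ell}$, which gives $\binom{b}{2}$ classes. The delicate case is $k=\ell$, i.e.\ the vector $(0,\dots,0,2,\dots,2)$ with $2$'s starting at position $\ell$. This requires $i<j$ with no complementary index in $[i,j-1]$ and with $i_{\ell-1}<i<j\le i_\ell$ (where $i_0:=0$). Such $i,j$ exist if and only if $i_\ell-i_{\ell-1}\ge 2$, i.e.\ if and only if $\al_{i_\ell-1}\in\Pi_\hfr$ and $\al_{i_\ell}\in\Pi_\qfr$. The number of such $\ell$ is exactly $c$.

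Vectors containing a $2$ never coincide with 0/1 vectors, and distinct patterns within each family are distinct vectors. Hence
$$r=\tbinom{b+1}{2}+\tbinom{b}{2}+c=b^2+c.$$
The main obstacle is the $k=\ell$ case: I must correctly identify when a vector consisting only of $2$'s is realized, and handle the boundary conventions ($i_0=0$, $i_\ell=n$) without double counting.
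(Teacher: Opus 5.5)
Your proof is correct and follows essentially the same route as the paper. Both list the positive roots of $B_n$ in simple-root coordinates and split the classes into $\tbinom{b+1}{2}$ classes with all coefficients at most $1$, $\tbinom{b}{2}$ classes with a coefficient $2$ preceded by some complementary coefficient $1$, and $c$ classes whose complementary coefficients are all $2$, which occur exactly when $\al_{i_\ell-1}\in\Pi_\hfr$. The only difference is cosmetic: you count the restricted coefficient vectors directly, while the paper exhibits the lowest root $\beta^1_{k,\ell}$, $\beta^2_{k,\ell}$, $\beta^3_\ell$ of each class.
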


\begin{proof}
	\begin{figure}[h]
		\centering
		\begin{tikzpicture}[scale = 1, baseline={(0,-0.1)}]
			\tikzstyle{every node}=[circle, draw, inner sep=1.5pt];
			
			\node (a1) at (0,0) {};
			\node (a2) at (1,0) {};
			\node[draw = none] (dots) at (1.5,0) {$\cdots$};
			\node (a3) at (2,0) {};
			\node (a4) at (3,0) {};
			\node (a5) at (4,0) {};
			
			\node[draw = none] at ($(a1.south)+(0,-8pt)$) {$\al_{1}$};
			\node[draw = none] at ($(a2.south)+(0,-8pt)$) {$\al_{2}$};
			\node[below=2pt of dots, draw=none] {};
			\node[draw = none] at ($(a3.south)+(0,-8pt)$) {$\al_{n-2}$};
			\node[draw = none] at ($(a4.south)+(0,-8pt)$) {$\al_{n-1}$};
			\node[draw = none] at ($(a5.south)+(0,-8pt)$) {$\al_{n}$};
			
			\draw (a1)--(a2) (a3)--(a4);
			\draw[double distance=2pt] (a4) -- (a5);
			\draw[double distance=2pt,-implies] (a4)-- ($(a5)!0.4!(a4)$); 
		\end{tikzpicture}
		\caption{Dynkin diagram of $B_n$ type}\label{FigureBn}
	\end{figure}
	Fix $n \in \Nmb$. The positive roots $\al \in \Delta^+$ are either given by
	\benalp
	\item $\al = \sum\limits_{s = i}^{j}\al_s$
	for some $1 \leq i \leq j \leq n$, or 
	\item $\al = \sum\limits_{s = i}^{j-1}\al_s + 2\sum\limits_{t = j}^{n}\al_t$
	for some $1 \leq i < j \leq n$.
	\een
	
	Suppose now that $\al$ is a positive complementary root. If $\al = \sum\limits_{s = i}^{j}\al_s$, then as in the proof of Theorem \ref{FormulaAn}, there exist unique indexes $i \leq i_k \leq i_\ell \leq j$ such that
	$$\al \sim \beta_{k, \ell}^1 := \sum\limits_{s = i_k}^{i_\ell}\al_s.$$
	
	If $\al = \sum\limits_{s = i}^{j-1}\al_s + 2\sum\limits_{t = j}^{n}\al_t$, then we may assume that at least one of the roots $\al_j, \dots, \al_n$ is complementary (otherwise, it holds that $\al \sim \sum\limits_{s = i}^{j-1} \al_s$; we have already accounted for such an equivalence class). Then there exists some $j \leq i_\ell \leq n$ such that
	$$\al \sim \al' := \sum\limits_{s = i}^{i_\ell-1}\al_s + 2\sum\limits_{t = i_\ell}^{n}\al_t.$$
	If at least one of the roots $\al_i, \al_{i+1}, \dots, \al_{i_\ell-1}$ is complementary, then there exist unique indexes $i \leq i_k < i_\ell \leq n$ such that
	$$\al \sim \beta^2_{k, \ell} := \sum\limits_{s = i_k}^{i_\ell-1}\al_s + 2\sum\limits_{t = i_\ell}^{n}\al_t.$$	
	If $\al_i, \al_{i+1}, \dots, \al_{i_\ell-1}$ are all non complementary, then
	$$\al \sim \beta^3_{\ell} := \al_{i_\ell -1} + 2\sum\limits_{t = i_\ell}^{n} \al_t.$$
	
	In either case, the roots $\beta^1_{k, \ell}$, $\beta^2_{k, \ell}$, $\beta^3_{\ell}$ are the lowest in their respective equivalence classes. Hence the total number of equivalence classes is
	\begin{align*}
		r &= \lvert\{(k, \ell) \colon 1 \leq k \leq \ell \leq b \} \rvert + \lvert \{ (k, \ell) \colon 1 \leq k < \ell \leq b \} \rvert + \lvert \{i \colon \al_i \in \Pi_\hfr, \al_{i+1} \in \Pi_\qfr \} \rvert \\
		&= \tbinom{b+1}{2} + \tbinom{b}{2} + c = b^2 + c.
	\end{align*}
	Thus we conclude the proof.
\end{proof}

\begin{theorem}\label{FormulaCn}
	Suppose that $\Delta$ is of $C_n$ type. Then $r = b^2$ if $\al_n \in \Pi_\qfr$, and $r = b^2+b$ if $\al_n \in \Pi_\hfr$. 
\end{theorem}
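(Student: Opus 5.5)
The plan is to follow the proofs of Theorems \ref{FormulaAn} and \ref{FormulaBn}. Each equivalence class in $\Delta_\qfr^+$ is determined by its coefficient vector $(n_{\beta,\al})_{\al\in\Pi_\qfr}$, so $r$ equals the number of distinct vectors of this form realized by positive complementary roots. By \cite{Bourbaki}, the positive roots of $C_n$ are of three kinds:
\begin{enumerate}[font = \normalfont, label = (\alph*)]
\item $\sum_{s=i}^{j}\al_s$ with $1\le i\le j\le n$;
\item $\sum_{s=i}^{j-1}\al_s+2\sum_{t=j}^{n-1}\al_t+\al_n$ with $1\le i<j\le n$;
\item $2\sum_{s=i}^{n-1}\al_s+\al_n$.
\end{enumerate}
Kinds (b) and (c) merge into one family indexed by $i\le j$, whose coefficients are $1$ on $\al_i,\dots,\al_{j-1}$, $2$ on $\al_j,\dots,\al_{n-1}$ and $1$ on $\al_n$. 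Write $\Pi_\qfr=\{\al_{i_1},\dots,\al_{i_b}\}$ as in the statement.

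\textbf{Kind (a).} This is exactly the $A_n$ computation of Theorem \ref{FormulaAn}. It contributes the $\binom{b+1}{2}$ patterns ``coefficient $1$ on a consecutive block $\al_{i_k},\dots,\al_{i_\ell}$ of complementary simple roots'', with $1\le k\le\ell\le b$.

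\textbf{Kinds (b), (c).} Restricted to $\Pi_\qfr\setminus\{\al_n\}$, the pattern is determined by two choices. The first is the first complementary index $i_k\ge i$ that is included. The second is the split index $i_\ell$, the first complementary index $\ge j$, from which on the coefficient is $2$. I would check realizability directly by choosing $i\in(i_{k-1},i_k]$ and $j\in(i_{\ell-1},i_\ell]$ with $i\le j$; when $\ell=k$ both are taken in the same gap. The coefficient of $\al_n$ is always $1$.

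\textbf{Case $\al_n\in\Pi_\qfr$.} Here $i_b=n$. The (b)/(c) patterns correspond to pairs $1\le k\le\ell\le b$, where $k=b$ means that no coefficient-$1$ complementary roots precede $\al_n$. This gives $\binom{b+1}{2}$ patterns. I then identify the overlap with kind (a): a (b)/(c) pattern with $\ell=b$ (no coefficient $2$) is coefficient $1$ on $\al_{i_k},\dots,\al_{i_b}$. This coincides with the kind (a) pattern for the block ending at $i_b=n$, giving $b$ coincidences, one for each $k$. Hence
$$r=2\tbinom{b+1}{2}-b=b^2.$$

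\textbf{Case $\al_n\in\Pi_\hfr$.} Now $i_b<n$, so the split may also lie past $i_b$, written $\ell=b+1$. The pairs are $1\le k\le\ell\le b+1$, and I exclude $k=\ell=b+1$, which gives the zero vector and hence a root of $\Delta_\hfr$. This yields $\binom{b+2}{2}-1$ patterns. The overlap with kind (a) is again exactly the $b$ patterns with $\ell=b+1$, namely blocks ending at $i_b$. Therefore
$$r=\tbinom{b+1}{2}+\tbinom{b+2}{2}-1-b=b^2+b.$$

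The main obstacle is the bookkeeping: verifying that distinct pairs $(k,\ell)$ really give distinct coefficient vectors, and that the only coincidences between kind (a) and kinds (b)/(c) are the $b$ patterns with no coefficient $2$ on a complementary root. For the first point I would argue that the vector is injective in $(k,\ell)$ because $k$ and $\ell$ are read off from where the coefficient first becomes nonzero and where it first becomes $2$. For the second, any kind (a) vector has all entries in $\{0,1\}$, and it agrees with a (b)/(c) vector only if the latter has no entry $2$ and its block reaches the last complementary index $i_b$.
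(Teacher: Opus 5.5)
Your proposal is correct and follows essentially the same route as the paper. Both split $\Delta^+$ into the roots $\sum_{s=i}^{j}\al_s$ and the roots with coefficient pattern $1,\dots,1,2,\dots,2,1$, and label classes by the complementary indices $(k,\ell)$ where the coefficient first becomes nonzero and first becomes $2$. The only difference is bookkeeping: the paper discards up front the second-kind roots with no coefficient $2$ on a complementary root (they are equivalent to first-kind roots $\sum_{s=i}^{n}\al_s$) and counts $\tbinom{b}{2}$, resp. $\tbinom{b+1}{2}$, new classes directly, whereas you count all patterns and subtract the $b$ overlaps, which agrees since $\tbinom{b+1}{2}-b=\tbinom{b}{2}$ and $\tbinom{b+2}{2}-1-b=\tbinom{b+1}{2}$.
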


\begin{proof}
	\begin{figure}[h]
		\centering
		\begin{tikzpicture}[scale = 1, baseline={(0,-0.1)}]
			\tikzstyle{every node}=[circle, draw, inner sep=1.5pt];
			
			\node (a1) at (0,0) {};
			\node (a2) at (1,0) {};
			\node[draw = none] (dots) at (1.5,0) {$\cdots$};
			\node (a3) at (2,0) {};
			\node (a4) at (3,0) {};
			\node (a5) at (4,0) {};
			
			\node[draw = none] at ($(a1.south)+(0,-8pt)$) {$\al_{1}$};
			\node[draw = none] at ($(a2.south)+(0,-8pt)$) {$\al_{2}$};
			\node[below=2pt of dots, draw=none] {};
			\node[draw = none] at ($(a3.south)+(0,-8pt)$) {$\al_{n-2}$};
			\node[draw = none] at ($(a4.south)+(0,-8pt)$) {$\al_{n-1}$};
			\node[draw = none] at ($(a5.south)+(0,-8pt)$) {$\al_{n}$};
			
			\draw (a1)--(a2) (a3)--(a4);
			\draw[double distance=2pt] (a4) -- (a5);
			\draw[double distance=2pt,-implies] (a5)-- ($(a5)!0.5!(a4)$); 
		\end{tikzpicture}
		\caption{Dynkin diagram of $C_n$ type}\label{FigureCn}
	\end{figure}
	Fix $n \in \Nmb$. The positive roots $\al \in \Delta^+$ are either given by
	\benalp
		\item $\al = \sum\limits_{s = i}^{j} \al_s$ for some $1 \leq i \leq j \leq n$, or
		\item $\al = \sum\limits_{s = i}^{j-1}\al_s + 2\sum\limits_{t = j}^{n-1} \al_t + \al_n$
		for some $1 \leq i \leq j \leq n-1$.
	\een
	Suppose now that $\al \in \Delta_\qfr^+$. If $\al= \sum\limits_{s = i}^{j} \al_s$, then there exist, as in Theorem \ref{FormulaAn}, unique indexes $i \leq i_k \leq i_\ell \leq j$ such that
	$$\al \sim \beta_{k, \ell}^1 := \sum\limits_{s = i_k}^{i_\ell} \al_s.$$
	Hence to the equivalence class of $\al$ corresponds a unique pair $(k, \ell)$ with $1 \leq k \leq \ell \leq b$.
		
	If $\al = \sum\limits_{s = i}^{j-1}\al_s + 2\sum\limits_{t = j}^{n-1} \al_t + \al_n$, then we may assume that some root $\al_j, \dots, \al_{n-1}$ is complementary (otherwise, it holds that $\al \sim \sum\limits_{s = i}^{n}\al_s$). Then there exist unique indexes $i \leq i_k \leq i_\ell \leq n-1$ such that
	$$\al \sim \beta^2_{k, \ell} := \sum\limits_{s = i_k}^{i_\ell-1}\al_s + 2\sum\limits_{t = i_\ell}^{n-1} \al_t + \al_n.$$
	Hence to the equivalence class of $\al$ corresponds a unique pair $(k, \ell)$ such that $1 \leq k \leq \ell \leq b$ if $\al_n \in \Pi_\hfr$, and $1 \leq k \leq \ell \leq b-1$ if $\al_n \in \Pi_\qfr$.
	
	We conclude that the number of equivalence classes is given by
$$r = \begin{cases}
	\lvert \{ (k, \ell) \colon 1 \leq k \leq \ell \leq b \}\rvert + \lvert \{(k, \ell) \colon 1 \leq k \leq \ell \leq b-1 \}\rvert = \tbinom{b+1}{2} + \tbinom{b}{2} = b^2, \text{ if } \al_n \in \Pi_\qfr, \\
	2\lvert \{ (k, \ell) \colon 1 \leq k \leq \ell \leq b \}\rvert = 2\tbinom{b+1}{2} = b^2 + b, \text{ if } \al_n \in \Pi_\hfr.
\end{cases}$$
Thus the theorem has been proved.
\end{proof}

\begin{theorem}\label{FormulaDn}
	Suppose that $\Delta$ is of $D_n$ type. Then
	$$r = \begin{cases}
		b^2+c, &\text{if } \al_{n-1}, \al_n \in \Pi_\hfr, \\
		b^2-b+c, &\text{if } \al_{n-2},\al_{n-1}, \al_n \in \Pi_\qfr, \\
		b^2-b+1+c, &\text{otherwise, }
	\end{cases}$$
	where $c := \lvert \{ i \in \{1, \dots, n-3 \} \colon \al_i \in \Pi_\hfr, \al_{i+1} \in \Pi_\qfr \} \rvert$.
\end{theorem}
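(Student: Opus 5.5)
The plan is to mimic the proofs of Theorems \ref{FormulaAn} and \ref{FormulaBn}. Each equivalence class $P_\beta\seq\Delta_\qfr^+$ is determined by its coefficient vector $(n_{\beta,\al})_{\al\in\Pi_\qfr}$ and has a unique lowest root. So $r$ equals the number of distinct vectors $(n_{\beta,\al})_{\al\in\Pi_\qfr}$ realized by complementary positive roots, and I will count these by exhibiting a canonical lowest representative in each class. Throughout I assume $n\ge 4$ and number the Dynkin diagram so that $\al_{n-2}$ is the branch node and $\al_{n-1},\al_n$ are the two short legs.

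I will use the description of $\Delta^+$ from \cite{Bourbaki}. The positive roots are of the following types:
\begin{enumerate}[font = \normalfont, label = (\alph*)]
\item $\sum_{s=i}^{j}\al_s$ with $1\le i\le j\le n-1$;
\item $\sum_{s=i}^{n-2}\al_s+\al_n$ with $1\le i\le n-1$, where $i=n-1$ gives $\al_n$;
\item $\sum_{s=i}^{n}\al_s$ with $1\le i\le n-2$;
\item $\sum_{s=i}^{j-1}\al_s+2\sum_{t=j}^{n-2}\al_t+\al_{n-1}+\al_n$ with $1\le i<j\le n-2$.
\end{enumerate}
Types (a), (b), (c) are the roots with all coefficients $\le 1$; they form the support-connected subsets of the diagram that are paths. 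Type (d) are the roots having some coefficient $2$.

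First I count the classes of roots with all coefficients $\le 1$. Their coefficient vectors restricted to $\Pi_\qfr$ are indicator vectors of sets of white nodes that lie on a common path in the $D_n$ tree and are ``consecutive'' among white nodes along that path. As in Theorem \ref{FormulaAn}, each class has the lowest representative obtained by trimming black nodes from both ends of the path. These classes then correspond to nonempty sets of white nodes of the form $\{$white nodes on a subpath$\}$. If $\al_{n-1},\al_n$ are not both white, the white nodes lie on a line (the $A_{n-1}$ chain obtained by deleting a black leg, or the chain through one white leg), giving $\binom{b+1}{2}$ classes. If both legs are white, the count is $\binom{b+1}{2}$ minus the number of pairs that are not path-connected, and I will compute it directly in each subcase.

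Next I count the classes of type (d). By the same argument as in Theorem \ref{FormulaBn}, a type (d) root is equivalent to a root with a $2$-coefficient on a white node only if some $\al_t$ with $j\le t\le n-2$ is complementary. Otherwise it collapses onto an already counted class of type (c). When a white node $\al_{i_\ell}$ with $i_\ell\le n-2$ carries the first $2$, two things can happen. Either some node among $\al_i,\dots,\al_{i_\ell-1}$ is white, giving a representative $\beta^2_{k,\ell}$ indexed by a pair of white nodes. Or all of $\al_i,\dots,\al_{i_\ell-1}$ are black, giving a representative $\beta^3_\ell=\al_{i_\ell-1}+2\sum_{t=i_\ell}^{n-2}\al_t+\al_{n-1}+\al_n$ exactly when $\al_{i_\ell-1}$ is black. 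This last possibility produces the term $c$, with $i_\ell-1\le n-3$, which explains the range $\{1,\dots,n-3\}$ in the definition of $c$. The coefficients on $\al_{n-1},\al_n$ are fixed at $1$ in type (d), so the type (d) count depends only on which of $\al_1,\dots,\al_{n-2}$ are white: it is $\binom{b'}{2}+c$, where $b'$ is the number of white nodes among $\al_1,\dots,\al_{n-2}$.

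Finally I add the two counts in the three cases of the statement.
\begin{enumerate}[font = \normalfont, label = (\roman*)]
\item Both legs black: $b'=b$, and the sum is $\binom{b+1}{2}+\binom{b}{2}+c=b^2+c$.
\item Exactly one leg white: $b'=b-1$, and the sum is $\binom{b+1}{2}+\binom{b-1}{2}+c=b^2-b+1+c$.
\item Both legs white: the non-type-(d) classes number $\binom{b+1}{2}-(\text{number of white nodes among }\al_1,\dots,\al_{n-2})$ when the pair $\{\al_{n-1},\al_n\}$ is reachable, with a correction of $+1$ when $\al_{n-2}$ is black. This is because then the set $\{\al_{n-1},\al_n\}$ alone is realized by $\al_{n-2}+\al_{n-1}+\al_n$ only if the intermediate nodes are black.
\end{enumerate}
Combining these gives $b^2-b+c$ when $\al_{n-2},\al_{n-1},\al_n$ are all white, and $b^2-b+1+c$ otherwise.

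The main obstacle is this last case, where the branch node makes the white nodes non-linearly ordered. I will handle it by explicitly listing the white sets $\{\al_{n-1}\}$, $\{\al_n\}$, $\{\al_{n-1},\al_n\}$ and those combined with the last white node among $\al_1,\dots,\al_{n-2}$, carefully tracking which unions are realized. As a sanity check, the full flag ($b=n$, $c=0$) must give $r=|\Delta^+|=n^2-n$.
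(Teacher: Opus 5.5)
Your overall strategy is the paper's: split $\Delta^+$ into the Bourbaki types (a)--(d) and count classes via lowest representatives. The parts you actually carry out are correct. These are the $\binom{b+1}{2}$ count of all-coefficients-$\le 1$ classes when at most one leg is white, and the type (d) count $\binom{b'}{2}+c$, including the origin of $c$ and of the range $\{1,\dots,n-3\}$. They give the formulas $b^2+c$ and $b^2-b+1+c$ in cases (i) and (ii).

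The gap is case (iii), which is exactly where the three formulas separate. Your count $\binom{b+1}{2}-(b-2)$ (plus $1$ when $\al_{n-2}\in\Pi_\hfr$) for the non-type-(d) classes is wrong, and your final combination does not follow from it. Adding $\binom{b-2}{2}+c$ gives $b^2-3b+5+c$ (plus $1$), not $b^2-b+c$. For the full flag of $D_4$ this yields $9$ instead of $|\Delta^+|=12$, so your own sanity check already fails.

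The idea of subtracting pairs that are ``not path-connected'' points the wrong way. In the $D_n$ tree any two white nodes are joined by a path. Write $w_1<\dots<w_{b-2}$ for the white nodes among $\al_1,\dots,\al_{n-2}$, and compare with the consecutive runs of the linear order $w_1<\dots<w_{b-2}<\al_{n-1}<\al_n$. You must \emph{add} the $b-2$ sets $\{w_k,\dots,w_{b-2},\al_n\}$, which come from the type (b) roots $\sum_{s=i}^{n-2}\al_s+\al_n$. You must remove the run $\{\al_{n-1},\al_n\}$ when $\al_{n-2}$ is white, because every positive root containing both $\al_{n-1}$ and $\al_n$ also contains $\al_{n-2}$.

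Here is the correct count, with $\eta=1$ if $\al_{n-2}\in\Pi_\hfr$ and $\eta=0$ otherwise.
\begin{enumerate}
\item Type (a) gives $\binom{b}{2}$ classes: the consecutive white runs on the chain $\al_1,\dots,\al_{n-1}$, which carries $b-1$ white nodes.
\item Type (b) gives $b-1$ new classes: $\{w_k,\dots,w_{b-2},\al_n\}$ and $\{\al_n\}$.
\item Type (c) gives $b-2+\eta$ new classes: $\{w_k,\dots,w_{b-2},\al_{n-1},\al_n\}$, plus $\{\al_{n-1},\al_n\}$ when $\eta=1$.
\end{enumerate}
The total is $\binom{b}{2}+2b-3+\eta=\binom{b+1}{2}+(b-2)-1+\eta$. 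Adding $\binom{b-2}{2}+c$ gives $b^2-b+c+\eta$, which is the statement. This is exactly the paper's tally $\binom{b}{2}+(b-1)+(b-2)+\binom{b-2}{2}+c$ in the all-white case. With this replacing your case (iii), your argument is complete.
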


\begin{proof}
	\begin{figure}[h]
		\centering
		\begin{tikzpicture}[scale = 1, baseline={(0,-0.1)}]
			\tikzstyle{every node}=[circle, draw, inner sep=1.5pt];
			
			\node (a1) at (0.25,0) {};
			\node (a2) at (1.25,0) {};
			\node[draw = none] (dots) at (1.75,0) {$\cdots$};
			\node (a3) at (2.25,0) {};
			\node (a4) at (3.25,0) {};
			\node (a5) at (3.75,0.866) {};
			\node (a6) at (3.75,-0.866) {};
			
			\node[draw = none] at ($(a1.south)+(0,-8pt)$) {$\al_1$};
			\node[draw = none] at ($(a2.south)+(0,-8pt)$) {$\al_2$};
			\node[draw = none] at ($(a3.south)+(0,-8pt)$) {$\al_{n-3}$};
			\node[draw = none] at ($(a4.south)+(16pt,0)$) {$\al_{n-2}$};
			\node[below=2pt of dots, draw=none] {};
			\node[draw = none] at ($(a5.south)+(16pt,0)$) {$\al_{n-1}$};
			\node[draw = none] at ($(a6.south)+(12pt,0)$) {$\al_n$};
			
			\draw (a1)--(a2) (a3)--(a4) (a4)--(a5) (a4)--(a6);
		\end{tikzpicture}
		\caption{Dynkin diagram of $D_n$ type}\label{FigureDn}
	\end{figure}
	Fix $n \in \Nmb$. The positive roots $\al \in \Delta^+$ are either of the form
	\benalp
		\item $\al = \sum\limits_{s = i}^{j}\al_s$ for some $1 \leq i \leq j \leq n-1$,
		\item $\al = \sum\limits_{s = i}^{n-2}\al_s + \al_n$ for some $1 \leq i \leq n$, $i \neq n-1$,
		\item $\al = \sum\limits_{s = i}^{n}\al_s$ for some $1 \leq i \leq n-2$, or
		\item $\al= \sum\limits_{s = i}^{j-1} \al_s + 2\sum\limits_{t = j}^{n-2} \al_t + \al_{n-1} + \al_n$ for some $1 \leq i < j \leq n-2$.
	\een
	Suppose now that $\al \in \Delta_\qfr^+$. If $\al = \sum\limits_{s = i}^{j}\al_s$, then there exist, as in Theorem \ref{FormulaAn}, unique indexes $i \leq i_k \leq i_\ell \leq j$ such that
	$$\al \sim \beta_{k, \ell}^1 := \sum\limits_{s = i_k}^{i_\ell}\al_s.$$
	Hence to the equivalence class of $\al$ corresponds a unique pair $(k, \ell)$ with $1 \leq k \leq \ell \leq b$ if $\al_n \in \Pi_\hfr$, and with $1 \leq k \leq \ell \leq b-1$ if $\al_n \in \Pi_\qfr$.
		
	If $\al = \sum\limits_{s = i}^{n-2}\al_s + \al_n$ and $\al_n \in \Pi_\hfr$, then $\al \sim \sum\limits_{s = i}^{n-2}\al_s$. Otherwise, there exists a unique $i \leq i_k \leq n$ with $i_k \neq n-1$, such that
	$$\al \sim \beta_{k}^2 := \al = \sum\limits_{s = i_k}^{n-2}\al_s + \al_n.$$	
	Hence to the equivalence class of $\al$ corresponds a unique index $1 \leq k \leq b$ if $\al_{n-1} \in \Pi_\hfr$ and $\al_n \in \Pi_\qfr$, and an index $1 \leq k \leq b$, $k \neq b-1$ if $\al_{n-1}, \al_n \in \Pi_\qfr$.
		
	If $\al = \sum\limits_{s = i}^{n}\al_s$ and $\al_n \in \Pi_\hfr$, then $\al \sim \sum\limits_{s = i}^{n-1}\al_s$. In said scenario, we need not account for the roots of $(b)$ type. We may proceed in a similar manner if $\al_{n-1} \in \Pi_\hfr$. Assume now that both $\al_{n-1}$ and $\al_n$ are complementary. If at least one of the roots $\al_i, \al_{i+1}, \dots, \al_{n-2}$ is complementary, then there exists a unique index $i \leq i_k \leq n-2$ such that 
	$$\al \sim \beta^{3}_{k} := \sum\limits_{s = i_k}^{n}\al_s.$$
	To such an $\al$ corresponds a unique index $1 \leq k \leq b-2$. If $\al_{i}, \al_{i+1},\dots, \al_{n-2}$ are all non complementary, then $\al \sim \al_{n-2}+\al_{n-1}+\al_n$. Note that this scenario is only possible when $\al_{n-2} \in \Pi_\hfr$.
		
	If $\al = \sum\limits_{s = i}^{j-1} \al_s + 2\sum\limits_{t = j}^{n-2} \al_t + \al_{n-1} + \al_n$, then we may suppose that at least one of the roots $\al_{j}, \dots, \al_{n-2}$ is complementary (otherwise, it holds that $\al \sim \sum\limits_{s = i}^{n}\al_s$). Therefore, there exists a unique index $j \leq i_\ell \leq n-2$ such that 
	$$\al \sim \sum\limits_{s = i}^{i_\ell-1} \al_s + 2\sum\limits_{t = i_\ell}^{n-2} \al_t + \al_{n-1} + \al_n.$$ If at least one of the roots $\al_i, \dots, \al_{i_\ell -1}$ is a complementary root, then there exist unique indexes $i \leq i_k < i_\ell \leq n-2$ such that
	$$\al \sim \beta_{k, \ell}^4 := \sum\limits_{s = i_k}^{i_\ell-1} \al_s + 2\sum\limits_{t = i_\ell}^{n-2} \al_t + \al_{n-1} + \al_n.$$
	To such an $\al$ corresponds a unique pair $(k, \ell)$ with $1 \leq k < \ell \leq b$ if $\al_{n-1}, \al_n$ are both non complementary roots, with $1 \leq k < \ell \leq b-1$ if only of them is a complementary root, and with $1 \leq k < \ell \leq b-2$ if both $\al_{n-1}, \al_n$ are complementary roots. If $\al_i, \dots, \al_{i_\ell -1}$ are all non complementary, then
	$$\al \sim \beta_{\ell}^5 := \al_{i_\ell -1} + 2\sum\limits_{t = i_\ell}^{n-2}\al_t +\al_{n-1}+\al_n.$$

	In each case, the complementary roots described are the lowest representatives of the equivalence class of $\al$.
	\begin{enumerate}
		\item If $\al_{n-1}, \al_n \in \Pi_\hfr$, then the total number of equivalence classes is given by 
		$$r = \tbinom{b+1}{2} + \tbinom{b}{2}+c = b^2 + c.$$
		
		\item If $\al_{n-2},\al_{n-1},\al_n \in \Pi_\qfr$, then
		$$r = \tbinom{b}{2}+(b-1) + (b-2) + \tbinom{b-2}{2}+c = b^2-b+c.$$
		
		\item For the remaining cases, it holds that $r = b^2-b+1+c$.
	\end{enumerate}
\end{proof}

We provide the following table of formulas for $r := r(G/H)$ in terms of the amount of complementary simple roots $b := \lvert \Pi_\qfr \rvert$. Recall that we have defined 
$$c := \begin{cases}
	\lvert \{i \in \{1, \dots, n-1 \} \colon \al_i \in \Pi_\hfr, \al_{i+1} \in \Pi_\qfr \} \rvert, \hspace{2mm} \text{if } \Delta \text{ is of } B_n \text{ type}, \\
	
	\lvert \{i \in \{1, \dots, n-3 \} \colon \al_i \in \Pi_\hfr, \al_{i+1} \in \Pi_\qfr \} \rvert, \hspace{2mm} \text{if } \Delta \text{ is of } D_n \text{ type},
\end{cases}$$
and that the ordered bases $\Pi = \{\al_1, \dots, \al_n \}$ for $\Delta$ are the ones described in Figures \ref{FigureAn}, \ref{FigureBn}, \ref{FigureCn} and \ref{FigureDn}.

\begin{table}[ht]
	\centering
	\small
	\setlength{\tabcolsep}{4pt} 
	\renewcommand{\arraystretch}{1.4} 
	
	\makeatletter
	\newcommand{\lineagruesa}{\noalign{\global\arrayrulewidth=1.5pt}\hline\noalign{\global\arrayrulewidth=0.4pt}}
	\makeatother
	
	\caption{Number of irreducible isotropy summands of flag manifolds $G/H$, with $G$ a compact classical simple Lie group.}
	\label{tab:classical}
	\vspace{-3mm}
	
	\begin{tabular}{|c | c |}
		\lineagruesa
		Type for $\Delta$ & $r := r(G/H)$\\ 
		\lineagruesa
		$A_n$ & $\tbinom{b+1}{2}$\\ \hline
		$B_n$ & $b^2+c$ \\ \hline
		$C_n$ & $\begin{cases}
			b^2, &\text{if } \alpha_n \in \Pi_{\qfr}, \\
			b^2+b, &\text{if } \alpha_n \in \Pi_{\hfr}.
		\end{cases}$\\ \hline
		$D_n$ & $\begin{cases}
			b^2+c, &\text{if } \alpha_{n-1}, \alpha_n \in \Pi_{\hfr}, \\
			b^2-b+c, &\text{if } \alpha_{n-2}, \alpha_{n-1}, \alpha_n \in \Pi_{\qfr}, \\
			b^2-b+1+c, &\text{otherwise}.
		\end{cases}$ \\ \hline
	\end{tabular}
\end{table}

\begin{remark}\label{remark1}
On \cite[Sections 3, 4, 5, 6]{Alek}, several formulas for $r$ were given, for some families of flag manifolds $G/H$ with $G$ a compact classical simple Lie group. Those expressions agree with ours when $G$ is of $A_n$ type, but they differ in all other cases.
\end{remark}

Finally, by a hand calculation, we obtain the following list for the number of irreducible isotropy subspaces, for the flag manifolds of the exceptional root systems of $F_4$ and $G_2$ type.

\begin{table}[ht]
	\centering
	\small
	\setlength{\tabcolsep}{4pt} 
	
	\renewcommand{\arraystretch}{1.4}
	
	\makeatletter
	\newcommand{\lineagruesa}{\noalign{\global\arrayrulewidth=1.5pt}\hline\noalign{\global\arrayrulewidth=0.4pt}}
	\makeatother
	
	\caption{Number of irreducible isotropy summands for flag manifolds $G/H$, with $G$ a compact simple Lie group of $F_4$ type or of $G_2$ type.}
	\label{tab:expectF4andG2}
	\vspace{-3mm}
	
	\begin{tabular}{|c | c || c | c |}
		\lineagruesa
		Painted Dynkin diagram & $r := r(G/H)$ & Painted Dynkin diagram & $r := r(G/H)$ \\ 
		\lineagruesa
		\DynF{white}{white}{white}{white} & $24$ & \DynF{black}{white}{black}{white} & $8$ \\ \hline
		\DynF{white}{white}{white}{black} & $13$ & \DynF{black}{black}{white}{white} & $9$ \\ \hline
		\DynF{white}{white}{black}{white} & $13$ & \DynF{white}{black}{black}{black} & $2$ \\ \hline
		\DynF{white}{black}{white}{white} & $16$ & \DynF{black}{white}{black}{black} & $3$ \\ \hline
		\DynF{black}{white}{white}{white} & $16$ & \DynF{black}{black}{white}{black} & $4$ \\ \hline
		\DynF{white}{white}{black}{black} & $6$  & \DynF{black}{black}{black}{white} & $2$ \\ \hline
		\DynF{white}{black}{white}{black} & $8$ & \DynG{white}{white} & $6$ \\ \hline
		\DynF{black}{white}{white}{black} & $8$ & \DynG{white}{black} & $2$ \\ \hline
		\DynF{white}{black}{black}{white} & $6$ & \DynG{black}{white} & $3$ \\ \hline
	\end{tabular}
\end{table}

\section{BTP metrics on flag manifolds}\label{Sec4}

On this section, we will always assume $(M, g, J)$ to be a Hermitian manifold. Our object of study will be the parallelism conditions deriving from the \textit{Bismut connection} $\nabla^B$ (see \ref{Sec2,3}).

\begin{definition}
	The Hermitian manifold $(M,g,J)$ is said have \textit{parallel Bismut torsion} or to be a \textit{BTP manifold} if $T^B$ is $\nabla^B$-parallel, and it is said to be a \textit{Bismut-Ambrose-Singer} manifold or a \textit{BAS manifold} if both $T^B$ and $R^B$ are $\nabla^B$-parallel. Given a complex manifold $(M, J)$ and a Riemannian metric $g$ on $M$, $g$ is said to be a \textit{BTP metric} (respectively a \textit{BAS metric}) if the Hermitian manifold $(M,g,J)$ is a BTP manifold (respectively a BAS manifold).
\end{definition}

From here onwards, we will assume $(G/H, J)$ to be a complex flag manifold, with $J$ a fixed $G$-invariant complex structure on $G/H$ (see \ref{Sec2,2}). We will always assume $\Delta_\qfr^+ = \Delta^+ \cap \Delta_\qfr = P(J)$ to be the invariant ordering for $\Delta_\qfr$ defined by $J$. Our focus will be the study of $G$-invariant BAS metrics and BTP metrics on $(G/H, J)$ where $G$ is a compact simple Lie group. It can been proven that any K\"ahler metric and any multiple of the standard metric $g_o$ is a BAS metric on $(G/H, J)$ \cite[Proposition 4.1]{PodZ}.

\begin{theorem}\cite[Section 4]{PodZ}\label{EqNomizuTorsion}
	Let $g$ be a $G$-invariant Hermitian metric on $(G/H, J)$, $\nabla^B$ the Bismut connection and $\Lambda^B$ the corresponding Nomizu operator on $\qfr$. Then for any $\al, \beta, \al+\beta \in \Delta_\mathfrak{q}$, the following equations are satisfied:
		\begin{enumerate}[font = \normalfont, label = (\alph*)]
		\item $\Lambda^B(E_\al)(E_\beta) = N_{\al, \beta} \dfrac{(x_{\al+\beta}+x_\beta-x_{\al}-\epsilon_{\al}\epsilon_\beta\epsilon_{\al+\beta}(\epsilon_{\al}x_\al+\epsilon_\beta x_\beta - \epsilon_{\al+\beta}x_{\al+\beta}))}{2x_{\al+\beta}}E_{\al+\beta}$,
		\item $T^B(E_\al, E_\beta) = N_{\al, \beta} \dfrac{\epsilon_{\al}\epsilon_{\beta}\epsilon_{\al+\beta}(\epsilon_{\al+\beta}x_{\al+\beta}-\epsilon_{\al}x_{\al}-\epsilon_{\beta}x_{\beta})}{x_{\al+\beta}}E_{\al+\beta}$.
	\end{enumerate}
\end{theorem}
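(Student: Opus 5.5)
The plan is to compute the Bismut Nomizu operator directly from the Gauduchon formula with $t=-1$. In that case the formula reads
$$
g(\nabla^B_XY,Z)=g(\nabla^g_XY,Z)+\tfrac12\,d\omega(JX,JY,JZ),
$$
so $\Lambda^B=\Lambda^g+S$, where $S$ is the $\qfr$-valued bilinear map defined by $g(S(X,Y),Z)=\tfrac12 d\omega(JX,JY,JZ)$. Both terms are $\Ad(T)$-equivariant after complexification. Hence $\Lambda^B(E_\al)E_\beta$ lies in $\ggo_{\al+\beta}$, and it vanishes if $\al+\beta\notin\Delta_\qfr$. It therefore suffices to pair each term with $E_{-\al-\beta}$ and divide by $g_\Cmb(E_{\al+\beta},E_{-\al-\beta})$. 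That normalisation is, up to the sign fixed by $\kil_{\ggo^\Cmb}(E_\gamma,E_{-\gamma})=1$, equal to $x_{\al+\beta}$; this is where the denominators $x_{\al+\beta}$ come from.

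First I compute $\Lambda^g$, using the standard formula for the Levi-Civita Nomizu operator of a reductive homogeneous space:
$$
\Lambda^g(X)Y=\tfrac12[X,Y]_\qfr+U(X,Y),\qquad 2g(U(X,Y),Z)=g([Z,X]_\qfr,Y)+g(X,[Z,Y]_\qfr).
$$
Plugging in root vectors and using $g_\Cmb(E_\gamma,E_{-\gamma})\propto x_\gamma$ gives the known expression
$$
\Lambda^g(E_\al)E_\beta=N_{\al,\beta}\,\frac{x_{\al+\beta}+x_\beta-x_\al}{2x_{\al+\beta}}\,E_{\al+\beta}.
$$
This step needs the structure-constant identities $N_{\al,\beta}=N_{\beta,-\al-\beta}=N_{-\al-\beta,\al}$ and $N_{-\al,-\beta}=-N_{\al,\beta}$ for a Chevalley-type basis with $\kil(E_\gamma,E_{-\gamma})=1$.

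Next, I compute $S$. For an invariant form, at the origin one has
$$
d\omega(X,Y,Z)=-\omega([X,Y]_\qfr,Z)-\omega([Y,Z]_\qfr,X)-\omega([Z,X]_\qfr,Y),
$$
where $\omega(E_\gamma,E_\delta)=g_\Cmb(JE_\gamma,E_\delta)$ and $JE_\gamma=i\epsilon_\gamma E_\gamma$. Evaluating at $(JE_\al,JE_\beta,JE_{-\al-\beta})$ produces the factor $i^3\epsilon_\al\epsilon_\beta\epsilon_{-\al-\beta}$ from the three $J$'s. Each of the three cyclic terms then contributes $\pm N_{\al,\beta}\,i\epsilon_\gamma x_\gamma$ for $\gamma=\al+\beta,\al,\beta$ respectively. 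Combining these with $\epsilon_{-\gamma}=-\epsilon_\gamma$, the contribution is
$$
-N_{\al,\beta}\,\epsilon_\al\epsilon_\beta\epsilon_{\al+\beta}\,\frac{\epsilon_\al x_\al+\epsilon_\beta x_\beta-\epsilon_{\al+\beta}x_{\al+\beta}}{2x_{\al+\beta}}\,E_{\al+\beta}.
$$
Adding it to $\Lambda^g$ gives (a).

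For (b) I use $T^B(X,Y)=\Lambda^B(X)Y-\Lambda^B(Y)X-[X,Y]_\qfr$ together with $N_{\beta,\al}=-N_{\al,\beta}$. The Levi-Civita parts of $\Lambda^B(E_\al)E_\beta$ and $-\Lambda^B(E_\beta)E_\al$ add up to $N_{\al,\beta}\frac{(x_{\al+\beta}+x_\beta-x_\al)+(x_{\al+\beta}+x_\al-x_\beta)}{2x_{\al+\beta}}=N_{\al,\beta}$, which cancels against $-[E_\al,E_\beta]_\qfr=-N_{\al,\beta}E_{\al+\beta}$. The $S$-part is symmetric in $\al,\beta$, because both the $\epsilon$-product and $\epsilon_\al x_\al+\epsilon_\beta x_\beta$ are symmetric. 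Hence it appears twice with the antisymmetric $N$, which yields exactly
$$
N_{\al,\beta}\,\epsilon_\al\epsilon_\beta\epsilon_{\al+\beta}\,\frac{\epsilon_{\al+\beta}x_{\al+\beta}-\epsilon_\al x_\al-\epsilon_\beta x_\beta}{x_{\al+\beta}}\,E_{\al+\beta}.
$$

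The main obstacle is bookkeeping of signs and normalisations rather than any conceptual step. Specifically, I must fix the sign of $g_\Cmb(E_\gamma,E_{-\gamma})$ relative to $x_\gamma$ from the real basis $e_\gamma,f_\gamma$. I must also fix the normalisation convention for $d\omega$ (whether or not there is a factor $1/3$) so that it matches the Gauduchon formula as written. Finally, I must pin down the cyclic identities for $N$. I would settle these first by checking the final formula on a known case: for a Kähler metric, where $x_{\al+\beta}=x_\al+x_\beta$ with all $\epsilon=1$, the torsion $T^B$ must vanish, and (b) gives $0$ in that case.
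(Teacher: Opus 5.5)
Your proposal is correct. With your conventions, $g_\Cmb(E_\gamma,E_{-\gamma})=-x_\gamma$, and the relations $N_{\al,\beta}=N_{\beta,-\al-\beta}=N_{-\al-\beta,\al}$ follow from invariance of $\kil_{\ggo^\Cmb}$ together with $\kil_{\ggo^\Cmb}(E_\gamma,E_{-\gamma})=1$.

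\begin{itemize}
\item The Levi-Civita part gives $U(E_\al,E_\beta)=N_{\al,\beta}\tfrac{x_\beta-x_\al}{2x_{\al+\beta}}E_{\al+\beta}$, hence your $\Lambda^g$.
\item Your formula for $d\omega$ on invariant forms (with no factor $1/3$) gives $d\omega(E_\al,E_\beta,E_{-\al-\beta})=iN_{\al,\beta}(\epsilon_{\al+\beta}x_{\al+\beta}-\epsilon_\al x_\al-\epsilon_\beta x_\beta)$. Multiplying by the factor $i\epsilon_\al\epsilon_\beta\epsilon_{\al+\beta}$ coming from the three $J$'s produces exactly your $S$-term.
\item Part (b) is then just $T^B=2S$.
\end{itemize}

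The sign of $g_\Cmb(E_\gamma,E_{-\gamma})$ is harmless, since it cancels in every quotient. The paper does not prove this statement; it quotes it from \cite{PodZ}. Your argument therefore supplies a self-contained derivation by the natural route: the Levi-Civita Nomizu map plus half the torsion $3$-form.

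One correction concerns how you planned to fix conventions. The K\"ahler test cannot detect an error in the constant in front of $S$, such as a missing factor $1/3$ or a wrong sign. In the K\"ahler case the correction term in (a) and all of (b) vanish whatever that constant is.

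What does pin the constant down is $J$-compatibility. Since $\Lambda^B(E_\al)$ commutes with $J$, you must have $\Lambda^B(E_\al)E_\beta=0$ whenever $\epsilon_\beta\neq\epsilon_{\al+\beta}$. Take $\epsilon_\al=\epsilon_{\al+\beta}=1$ and $\epsilon_\beta=-1$, and put a constant $\lambda$ in front of $S$. The numerator in (a) becomes $(1-\lambda)(x_{\al+\beta}+x_\beta-x_\al)$. This vanishes for all metrics only when $\lambda=1$, which is exactly the normalisation you used. This check also confirms that the Gauduchon formula in the preliminaries must be read with the cyclic-sum convention for $d\omega$.
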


\begin{theorem}\label{BTP2summands}\cite[Theorem 1.3]{PodZ}
	Let $G$ be a compact simple Lie group such that $r(G/H) \leq 2$. Then every $G$-invariant BTP metric on $(G/H, J)$ is either a K\"ahler metric or a multiple of the standard metric.
\end{theorem}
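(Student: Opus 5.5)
The plan is a direct computation with the formulas of Theorem \ref{EqNomizuTorsion}. First I reduce to a rigid combinatorial situation. If $r=1$, $\Ad(H)$-invariance forces all $x_\al$ to be equal, so $g$ is a multiple of $g_o$ and there is nothing to prove. So assume $r=2$. I work with the ordering $\Delta^+$ determined by $J$, so that $\epsilon_\al=1$ for every $\al\in\Delta_\qfr^+$, and with its basis $\Pi$. From the formulas of Section \ref{Sec3} (for instance $r=\binom{b+1}{2}$ in type $A$ and $r\geq b^2$ in types $B,C,D$), together with the analogous count in the exceptional cases, $r=2$ forces $b=1$. In that case $\Pi_\qfr=\{\al_{i_0}\}$, and the two classes are
$$P_1=\{\gamma\in\Delta_\qfr^+ : n_{\gamma,\al_{i_0}}=1\},\qquad P_2=\{\gamma\in\Delta_\qfr^+ : n_{\gamma,\al_{i_0}}=2\}.$$
Write $x_1,x_2$ for the corresponding metric coefficients. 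The only relations $\al+\beta=\gamma$ with $\al,\beta,\gamma\in\Delta_\qfr^+$ are those with $\al,\beta\in P_1$ and $\gamma\in P_2$. By the Kähler criterion \cite[Proposition 7.7]{Arv}, the goal is therefore to show that $x_2=2x_1$ or $x_2=x_1$. Since $P_2\neq\emptyset$ and $[\qfr_1,\qfr_1]_\qfr=\qfr_2$, I fix $\al,\beta\in P_1$ with $\al+\beta\in P_2$, so that $N_{\al,\beta}\neq0$.

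Next I evaluate the parallelism condition $(\Lambda^B(E_{-\al})\cdot T^B)(E_\al,E_\beta)=0$ term by term.
\begin{itemize}
\item By Theorem \ref{EqNomizuTorsion}(b), $T^B(E_\al,E_\beta)=N_{\al,\beta}\frac{x_2-2x_1}{x_2}E_{\al+\beta}$.
\item $\Lambda^B(E_{-\al})E_{\al+\beta}$ is computed from Theorem \ref{EqNomizuTorsion}(a) with $\epsilon_{-\al}=-1$ and $\epsilon_{\al+\beta}=\epsilon_\beta=1$. The numerator is $x_1+x_2-x_1+(-x_1+x_2-x_1)=2(x_2-x_1)$, so the operator equals $N_{-\al,\al+\beta}\frac{x_2-x_1}{x_1}E_\beta$.
\item $\Lambda^B(E_{-\al})E_\al=0$, because $\Lambda^B$ takes values in $\qfr^\Cmb$ and $\ggo_0=\tfr^\Cmb\subseteq\hfr^\Cmb$.
\item $\Lambda^B(E_{-\al})E_\beta=0$ as well. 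The root $\beta-\al$, if it is a root at all, has $\al_{i_0}$-coefficient $0$, so it lies in $\Delta_\hfr\cup\{0\}$. Hence the weight $\beta-\al$ does not occur in $\qfr^\Cmb$, and $\Ad(T)$-equivariance of the invariant operator $\Lambda^B$ forces the value to vanish.
\end{itemize}

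Combining these, the parallelism identity collapses to
$$N_{\al,\beta}\,N_{-\al,\al+\beta}\,\frac{(x_2-2x_1)(x_2-x_1)}{x_1x_2}\,E_\beta=0 .$$
Since $\al+\beta$ and $\beta=(\al+\beta)-\al$ are roots, both structure constants are nonzero. Therefore $x_2=2x_1$, which makes $g$ Kähler, or $x_2=x_1$, which makes $g$ a multiple of $g_o$.

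The step I expect to need the most care is the reduction $r=2\Rightarrow b=1$ with classes given by the $\al_{i_0}$-coefficient, in particular checking the exceptional types uniformly. For $F_4$ and $G_2$ Table \ref{tab:expectF4andG2} settles it. For $\Eg_6,\Eg_7,\Eg_8$ I would instead argue directly: if $b\geq2$, the classes of the two complementary simple roots and of a complementary root involving both already give $r\geq3$. The vanishing of the cross term $\Lambda^B(E_{-\al})E_\beta$ is the other delicate point, and it is exactly where the coefficient description of the classes is used.
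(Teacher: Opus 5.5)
Your argument is correct. The paper itself gives no proof of this statement; it simply cites \cite[Theorem 1.3]{PodZ}. Within the paper's framework, though, it is a one-line consequence of Lemma \ref{Lemma1PodZ}(a). Once $b=1$ and the classes are $P_1,P_2$ as you describe, take $\al,\beta\in P_1$ with $\al+\beta\in P_2$. Then $\al-\beta$ has $\al_{i_0}$-coefficient $0$, so $\al-\beta\notin\Delta_\qfr$, and the lemma gives $x_2=2x_1$ or $x_2=x_1$ at once.

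You instead rederive that special case directly from Theorem \ref{EqNomizuTorsion}. Your weight argument killing $\Lambda^B(E_{-\al})E_\al$ and $\Lambda^B(E_{-\al})E_\beta$ is exactly where the hypothesis $\al-\beta\notin\Delta_\qfr$ enters. Because $x_\al=x_\beta$ is automatic inside $P_1$, the single component $(\Lambda^B(E_{-\al})\cdot T^B)(E_\al,E_\beta)$ suffices. In general the same computation only yields $x_{\al+\beta}=x_\al+x_\beta$ or $x_{\al+\beta}=x_\al$, and a second evaluation with $E_{-\beta}$ is needed. So your route is self-contained at the cost of a short computation. The computation checks out, including the numerator $2(x_2-x_1)$ and the nonvanishing of $N_{\al,\beta}N_{-\al,\al+\beta}$.

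There is one inaccuracy in the reduction step: $r\geq b^2$ is false in type $D$. For $\Pi_\qfr=\{\al_{n-1},\al_n\}$ the classes are indexed by $(n_{\gamma,\al_{n-1}},n_{\gamma,\al_n})\in\{(1,0),(0,1),(1,1)\}$, so $r=3<4$. The formulas of Section \ref{Sec3} still give $r\geq 3$ whenever $b\geq 2$, so nothing breaks. Still, it is cleaner to use, in every type, the uniform argument you reserve for $\Eg_6,\Eg_7,\Eg_8$: two complementary simple roots, together with the root obtained by summing the simple roots along the path joining them in the Dynkin diagram, lie in three distinct classes. This also avoids depending on the formulas of Section \ref{Sec3}, which Remark \ref{remark1} notes disagree with \cite{Alek} outside type $A$.

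Similarly, instead of quoting $[\qfr_1,\qfr_1]_\qfr=\qfr_2$, you can produce the needed triple elementarily. Start from any element of $P_2$ and subtract simple roots, staying in $\Delta^+$, until the step that removes an $\al_{i_0}$. That step writes a root of $P_2$ as a sum of two roots of $P_1$.
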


\begin{theorem}\label{BTPFFSpecialUnitary}\cite[Theorem 1.4]{PodZ}
	Every $G$-invariant BTP metric on the complex full flag manifold $(SU(n+1)/T^n, J)$ is either a K\"ahler metric or a multiple of the standard metric.
\end{theorem}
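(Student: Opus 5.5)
The plan is to work directly with the Nomizu operator of $\nabla^B$ from Theorem \ref{EqNomizuTorsion}. First I derive a local combinatorial constraint on the coefficients $x_\al$ from the BTP equation. Then I show that on the full flag of $A_n$ this local constraint forces a global dichotomy.

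Choose the ordering so that $\epsilon_\al=1$ for all $\al\in\Delta^+$. Theorem \ref{EqNomizuTorsion} then simplifies for $\al,\beta,\al+\beta\in\Delta^+$:
\[
T^B(E_\al,E_\beta)=N_{\al,\beta}\tfrac{x_{\al+\beta}-x_\al-x_\beta}{x_{\al+\beta}}E_{\al+\beta},\qquad \Lambda^B(E_\al)E_\beta=N_{\al,\beta}\tfrac{x_{\al+\beta}-x_\al}{x_{\al+\beta}}E_{\al+\beta}.
\]
The mixed-sign cases are obtained similarly, and the Cartan part acts through $\ad$ and gives no constraint.

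\textbf{Step 1 (local property).} I evaluate
\[
(\Lambda^B(E_{-\al})\cdot T^B)(E_\al,E_\beta)=\Lambda^B(E_{-\al})T^B(E_\al,E_\beta)-T^B(\Lambda^B(E_{-\al})E_\al,E_\beta)-T^B(E_\al,\Lambda^B(E_{-\al})E_\beta)
\]
and take its $E_\beta$-component. The middle term lands in $\tg^\Cmb$ paired with $E_\beta$ and vanishes after projecting to $\qg$. The other two terms are both multiples of $N_{\al,\beta}N_{-\al,\al+\beta}E_\beta$. Using the standard identity $N_{-\al,\al+\beta}=\pm N_{\al,\beta}$ (up to a positive factor), the vanishing of this component should reduce to
\[
(x_{\al+\beta}-x_\al-x_\beta)(x_\al-x_\beta)=0 \quad\text{or}\quad (x_{\al+\beta}-x_\al-x_\beta)(x_{\al+\beta}-x_\al)=0 .
\]
Doing the same with $E_{-\beta}$ in place of $E_{-\al}$ gives the symmetric version. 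Combining the two, I expect: whenever $\al,\beta,\al+\beta\in\Delta^+$, either $x_{\al+\beta}=x_\al+x_\beta$ or $x_{\al+\beta}=x_\al=x_\beta$. I would verify the precise factorization by a short computation with the explicit coefficients.

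\textbf{Step 2 (reduction to $A_3$).} Write the positive roots as $e_i-e_j$ with $i<j$, and set $x_{ij}:=x_{e_i-e_j}$. The triples $(\al,\beta,\al+\beta)$ are exactly $\{ij,jk,ik\}$ for $i<j<k$. Step 1 says each such triple is either \emph{additive}, meaning $x_{ik}=x_{ij}+x_{jk}$, or \emph{constant}, meaning all three values are equal. Two triples sharing two indices lie in a common $4$-element index set $\{a<b<c<d\}$. Moreover, any two triples are connected by a chain of triples in which consecutive ones share two indices. Therefore it suffices to show the following: within every set of four indices, the four triples are either all additive or all constant. Then the type propagates along chains and is global. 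All additive gives a Kähler metric. All constant, by connectivity of the root graph, gives $x\equiv$ const, i.e.\ a multiple of $g_o$. For $n\le 2$ there is at most one triple and the statement is immediate.

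\textbf{Step 3 ($A_3$ case analysis), the main obstacle.} Fix $a<b<c<d$ with the six unknowns $x_{ab},x_{bc},x_{cd},x_{ac},x_{bd},x_{ad}>0$ and the four triples $(abc),(bcd),(abd),(acd)$. I would argue by contradiction and assume some triple is constant while another is additive.

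Suppose first that $(abc)$ is constant with value $s$. If $(acd)$ is additive, then $x_{ad}=s+x_{cd}$. Now look at $(bcd)$. If it is constant, then $x_{bd}=x_{cd}=s$, so $x_{ad}=2s$. Then $(abd)$ would require either $2s=s+s$, which is additive and conflicts with the rest of the pattern only through the remaining triples, or $2s=s$, which is impossible. If $(bcd)$ is additive, then $x_{bd}=s+x_{cd}$. Then $(abd)$ requires either $x_{ad}=2s+x_{cd}$, contradicting $x_{ad}=s+x_{cd}$, or $x_{ad}=s=x_{bd}$, which forces $x_{cd}=0$.

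The remaining configurations (the first constant triple being $(bcd)$, $(abd)$, or $(acd)$) are handled symmetrically, using the diagram symmetry $i\mapsto 5-i$ to halve the work. The delicate point is the set of mixed patterns in which some equalities happen to hold simultaneously, for instance when $x$ is constant on a triple that is also additive. Such overlaps are impossible, since $s=2s$ forces $s=0$, and I would use this positivity repeatedly to eliminate them. I expect this finite but fiddly bookkeeping to be the main obstacle.
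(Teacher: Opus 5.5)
Your proposal has a genuine gap in Step 3. The local dichotomy from Step 1 does not by itself force the four triples of an $A_3$ block to have the same type.

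The sub-case you pass over in your own analysis is a genuine solution. Put $x_{ab}=x_{bc}=x_{cd}=x_{ac}=x_{bd}=s$ and $x_{ad}=2s$. Then $(abc)$ and $(bcd)$ are constant, while $(abd)$ and $(acd)$ are additive, since $2s=s+s$. All four triples are already accounted for, so there are no ``remaining triples'' to produce a conflict, and positivity is not violated. A short enumeration shows that, up to the value of $s$, this is the only mixed pattern compatible with the four local constraints. So on $SU(4)/T^3$, which has a single block, this $x$ satisfies everything your Step 1 yields, yet it is neither K\"ahler nor a multiple of $g_o$. No amount of bookkeeping with the local constraints can exclude it; you need more information from $\nabla^BT^B=0$.

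The missing ingredient is the genuinely three-root identity \eqref{eq1} of Lemma \ref{Lemma1PodZ}(d). This is exactly what the paper uses in the $A_3$ step of Lemma \ref{MYLEMMA}, from which Theorem \ref{FlagAn}, and hence this statement, follows. Take $\al=e_a-e_b$, $\beta=e_b-e_c$, $\gamma=e_c-e_d$, so that $\al+\gamma\notin\Delta$. Look at the $E_{\al+\beta+\gamma}$-component of $(\Lambda^B(E_\al)\cdot T^B)(E_\beta,E_\gamma)$. The term containing $\Lambda^B(E_\al)E_\gamma$ vanishes. The Jacobi identity gives $N_{\beta,\gamma}N_{\al,\beta+\gamma}=N_{\al,\beta}N_{\al+\beta,\gamma}\neq 0$, so
\[
\frac{(x_{ad}-x_{ab})(x_{bc}+x_{cd}-x_{bd})}{x_{bd}}=\frac{(x_{ac}-x_{ab})(x_{ac}+x_{cd}-x_{ad})}{x_{ac}} .
\]
On the configuration above, the left side equals $s$ and the right side equals $0$, which rules it out.

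In general the argument runs as in the paper. If $(acd)$ is additive, then $x_{ad}>x_{ab}$ (because $x_{ac}\geq x_{ab}$) and the right side vanishes. This forces $(bcd)$ to be additive, and then the whole block is additive. If $(acd)$ is constant, the local constraints alone already make all six values equal.

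With this added, your propagation through triples sharing two indices is a valid rephrasing of the paper's overlapping-triangle induction.

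A minor point on Step 1: since $\beta-\al\notin\Delta$ in type $A$, the term $T^B(E_\al,\Lambda^B(E_{-\al})E_\beta)$ is actually zero, not a multiple of $N_{\al,\beta}N_{-\al,\al+\beta}E_\beta$. The equation you obtain is therefore $(x_{\al+\beta}-x_\al-x_\beta)(x_{\al+\beta}-x_\al)=0$. Together with its $E_{-\beta}$ analogue, this does give the dichotomy of Lemma \ref{Lemma1PodZ}(a).
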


At this very moment, there have been no other characterization results similar to Theorem \ref{BTP2summands} and Theorem \ref{BTPFFSpecialUnitary}, in the context of BTP flag manifolds. We will prove the following conjecture about the relationship between $G$-invariant BTP metrics and $G$-invariant BAS metrics, which was first formulated in \cite{PodZ}.

\begin{conjecture}\cite[Conjecture 1.6]{PodZ}
	Every $G$-invariant BTP metric on $(G/H, J)$ is either a K\"ahler metric or a multiple of the standard metric.
\end{conjecture}

In \cite{PodZ} several restrictions for $G$-invariant BTP metrics were obtained. From here onwards, we will assume $g$ to be a $G$-invariant BTP metric on $(G/H, J)$.

\begin{lemma}\label{Lemma1PodZ}\cite[Lemma 4.3, Proposition 4.4]{PodZ}
	Given $\al, \beta \in \Delta_{\qfr}^+$ complementary roots such that $\al+\beta \in \Delta_\mathfrak{q}^+$, the following statements are satisfied:
	
	\begin{enumerate}[font = \normalfont, label = (\alph*)]
		\item If $\al-\beta \notin \Delta_\mathfrak{q}$, then either $x_{\al+\beta} = x_\al + x_\beta$ or $x_{\al+\beta} = x_\al = x_\beta$.
		\item If $\al-\beta \in \Delta_\mathfrak{q}^+$, then either $x_{\al+\beta} = x_\al + x_\beta$ or $x_{\al+\beta} = x_\al$.
		\item If $\al-\beta \in \Delta_\mathfrak{q}^-$, then either $x_{\al+\beta} = x_\al + x_\beta$ or $x_{\al+\beta} = x_\beta$.
		\item Suppose that there exists some $\gamma \in \Delta_\mathfrak{q}^+$ such that $\beta + \gamma$, $\al+\beta+\gamma \in \Delta_\mathfrak{q}^+$ and $\al+\gamma \notin \Delta_\mathfrak{q}^+$. Then 
		\begin{equation}\label{eq1}
			\dfrac{(x_{\al+\beta+\gamma}-x_\al)(x_\beta+x_\gamma - x_{\beta+\gamma})}{x_{\beta+\gamma}} = \dfrac{(x_{\al+\beta}-x_\al)(x_{\al+\beta}+x_\gamma-x_{\al+\beta+\gamma})}{x_{\al+\beta}}.
		\end{equation}
	\end{enumerate}
\end{lemma}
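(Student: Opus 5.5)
We argue directly from the BTP condition $\Lambda^B(X)\cdot T^B=0$, using the explicit formulas of Theorem \ref{EqNomizuTorsion}. Since $J$ is fixed with $\Delta_\qfr^+=P(J)$, we have $\epsilon_\al=1$ on $\Delta_\qfr^+$ and $\epsilon_{-\al}=-1$. By complex (multi)linearity, it suffices to evaluate
$$(\Lambda^B(E_\mu)\cdot T^B)(E_\nu,E_\rho)=\Lambda^B(E_\mu)T^B(E_\nu,E_\rho)-T^B(\Lambda^B(E_\mu)E_\nu,E_\rho)-T^B(E_\nu,\Lambda^B(E_\mu)E_\rho)=0$$
on root vectors $\mu,\nu,\rho\in\Delta_\qfr$. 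Both sides are multiples of $E_{\mu+\nu+\rho}$, provided $\mu+\nu+\rho$ is a complementary root or is $0$ (in the latter case we project to $\qfr$, and only terms landing in $\qfr$ matter). Every term is a product of two structure constants times a rational function of the $x$'s. The signs $\epsilon$ make $\Lambda^B$ and $T^B$ take simple forms. For $\al,\beta,\al+\beta\in\Delta_\qfr^+$, $T^B(E_\al,E_\beta)=N_{\al,\beta}\frac{x_{\al+\beta}-x_\al-x_\beta}{x_{\al+\beta}}E_{\al+\beta}$ and $\Lambda^B(E_\al)E_\beta=N_{\al,\beta}\frac{x_\beta-x_\al}{x_{\al+\beta}}E_{\al+\beta}$. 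Mixed-sign cases give analogous expressions.

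For (a), (b), (c), the plan is to choose the triple $(\mu,\nu,\rho)=(\al,\beta,-\beta)$, or equivalently test $\Lambda^B(E_\al)\cdot T^B$ on $(E_{\al+\beta},E_{-\beta})$-type pairs, so that the target is $E_\al$. Write $T(E_\beta,E_{-\beta})$, which lands in $\tfr^\Cmb\subset\hfr$ and so has zero $\qfr$-part. The surviving terms then involve $\Lambda^B(E_\al)E_{-\beta}$, which is nonzero only if $\al-\beta$ is a root, and $\Lambda^B(E_\al)E_\beta=c\,E_{\al+\beta}$ followed by $T^B(E_{\al+\beta},E_{-\beta})$. We then use the Chevalley relations $N_{\al,\beta}N_{\al+\beta,-\beta}=-N_{\beta,-\beta}\dots$, more precisely $N_{\al,\beta}^2$ type identities such as $N_{\al+\beta,-\beta}=\pm N_{\al,\beta}$ up to the $\kil$-normalization, to factor out a common nonzero constant.

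When $\al-\beta\notin\Delta$ (or $\al-\beta\in\Delta_\hfr$, so that $x$ is not defined there but the term drops from the $\qfr$-projection), the identity reduces to a product $(x_{\al+\beta}-x_\al-x_\beta)(x_\beta-x_\al)\cdot(\ldots)=0$. Combining this with the symmetric choice $(\beta,\al,-\al)$ gives a second factorization, and together they yield $x_{\al+\beta}=x_\al+x_\beta$ or $x_\al=x_\beta=x_{\al+\beta}$. When $\al-\beta\in\Delta_\qfr^\pm$, an extra term appears that involves $x_{\al-\beta}$. I expect it to cancel one of the two factorizations, leaving only $x_{\al+\beta}=x_\al+x_\beta$ or $x_{\al+\beta}=x_\al$ (respectively $x_\beta$). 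For (d), we take $\mu=\al$, $\nu=\beta$, $\rho=\gamma$ with target $E_{\al+\beta+\gamma}$. The hypothesis $\al+\gamma\notin\Delta_\qfr^+$ kills the term $T^B(E_\beta,\Lambda^B(E_\al)E_\gamma)$. The remaining two terms are
$$\Lambda^B(E_\al)T^B(E_\beta,E_\gamma)\quad\text{and}\quad T^B(\Lambda^B(E_\al)E_\beta,E_\gamma),$$
and they involve $N_{\beta,\gamma}N_{\al,\beta+\gamma}$ and $N_{\al,\beta}N_{\al+\beta,\gamma}$. These two products agree by the Jacobi identity, because the third product vanishes. Equating the two terms gives \eqref{eq1} directly after substituting the formulas above.

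The main obstacle is the sign bookkeeping. One has to track the structure constants $N$ with their Chevalley sign conventions, and the mixed-sign $\epsilon$ cases of Theorem \ref{EqNomizuTorsion} when negative roots enter. One also has to check that the extra $\al-\beta$ terms in (b), (c) produce exactly the stated dichotomy rather than a weaker one. I would handle these by fixing a Chevalley basis with $N_{-\al,-\beta}=-N_{\al,\beta}$, and by using $N_{\al,\beta}^2=\tfrac{q(p+1)}{2}\kil(\al,\al)$ only through ratios, never absolute values.
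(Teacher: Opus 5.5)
Your strategy is the right one. For (d) it is exactly the computation the paper carries out in the proof of Lemma~\ref{MyLemmaForCn}; for the present lemma the paper itself only cites \cite{PodZ}. That computation imposes $(\Lambda^B(E_\al)\cdot T^B)(E_\beta,E_\gamma)=0$, drops $T^B(E_\beta,\Lambda^B(E_\al)E_\gamma)$, and cancels structure constants via $N_{\beta,\gamma}N_{\al,\beta+\gamma}=N_{\al,\beta}N_{\al+\beta,\gamma}$.

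However, the explicit Nomizu formula you substitute is wrong, and with it the argument fails. For $\al,\beta,\al+\beta\in\Delta_\qfr^+$, Theorem~\ref{EqNomizuTorsion}(a) with all $\epsilon=1$ gives
\[
\Lambda^B(E_\al)E_\beta=N_{\al,\beta}\,\frac{x_{\al+\beta}-x_\al}{x_{\al+\beta}}\,E_{\al+\beta},
\]
not $N_{\al,\beta}\frac{x_\beta-x_\al}{x_{\al+\beta}}E_{\al+\beta}$. Your version does not reproduce your own torsion formula through $T(X,Y)=\Lambda(X)Y-\Lambda(Y)X-[X,Y]_\qfr$. With your formula, the factor you get in (a) is $x_\beta-x_\al$, and the swapped triple $(\beta,\al,-\al)$ produces the same factor. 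The two equations then coincide and give only ``$x_{\al+\beta}=x_\al+x_\beta$ or $x_\al=x_\beta$'', so the alternative $x_{\al+\beta}=x_\al=x_\beta$ cannot be reached. For the same reason, (d) would not come out as \eqref{eq1}.

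With the correct formula, use $T^B(E_\beta,E_{-\beta})=0$ and $T^B(E_{\al+\beta},E_{-\beta})=N_{\al+\beta,-\beta}\frac{x_{\al+\beta}-x_\al-x_\beta}{x_\al}E_\al$. Then the triple $(E_\al;E_\beta,E_{-\beta})$ gives, whenever $\Lambda^B(E_\al)E_{-\beta}=0$,
\[
N_{\al,\beta}N_{\al+\beta,-\beta}\,(x_{\al+\beta}-x_\al)(x_{\al+\beta}-x_\al-x_\beta)=0 .
\]
Both structure constants are nonzero since $\al+\beta$ and $\al$ are roots, so no Chevalley sign relations are needed. In case (a) the swapped triple gives the same equation with first factor $x_{\al+\beta}-x_\beta$, and the two together give exactly (a).

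The second gap is that (b) and (c) are left as an expectation, and the mechanism you guess (an extra $x_{\al-\beta}$-term ``cancelling'' a factorization) is not what happens. The missing observation is that if $\al-\beta\in\Delta_\qfr^+$, then $\epsilon_\al\epsilon_{-\beta}\epsilon_{\al-\beta}=-1$. The numerator in Theorem~\ref{EqNomizuTorsion}(a) is then $x_{\al-\beta}+x_\beta-x_\al+(x_\al-x_\beta-x_{\al-\beta})=0$, so $\Lambda^B(E_\al)E_{-\beta}=0$ identically. Hence the triple $(E_\al;E_\beta,E_{-\beta})$ still yields the clean equation above, i.e.\ $x_{\al+\beta}\in\{x_\al,\,x_\al+x_\beta\}$, which is (b).

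In the swapped triple, by contrast, $\Lambda^B(E_\beta)E_{-\al}=N_{\beta,-\al}\frac{x_\al-x_\beta}{x_{\al-\beta}}E_{\beta-\al}$ is genuinely nonzero. It contributes $T^B(E_\al,E_{\beta-\al})$, whose coefficient involves $x_{\al-\beta}$, so that equation does not factor. This is why the computation only yields the weaker alternative in (b). Case (c) is (b) with $\al$ and $\beta$ interchanged. With these two corrections your outline becomes a complete proof.
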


We will present the following result, which is a generalization of Lemma \ref{Lemma1PodZ}.

\begin{lemma}\label{MyLemma2Roots}
    Given $\al, \beta \in \Delta_{\qfr}^+$ complementary roots such that $\al+\beta \in \Delta_{\qfr}^+$, it holds that either $x_{\al+\beta} = x_{\al}+x_{\beta}$ or $x_{\al+\beta} = x_{\al} = x_\beta$.
\end{lemma}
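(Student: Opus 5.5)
By Lemma \ref{Lemma1PodZ}(a) there is nothing to prove when $\al-\beta\notin\Delta_\qfr$. By the symmetry $\al\leftrightarrow\beta$ (parts (b) and (c) swap), it suffices to treat the case $\al-\beta\in\Delta_\qfr^+$. There Lemma \ref{Lemma1PodZ}(b) gives either $x_{\al+\beta}=x_\al+x_\beta$ or $x_{\al+\beta}=x_\al$. So we assume $x_{\al+\beta}=x_\al\neq x_\al+x_\beta$, and we must show $x_\beta=x_\al$.

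Set $\delta:=\al-\beta\in\Delta_\qfr^+$, so that $\al=\delta+\beta$ and $\al+\beta=\delta+2\beta$. The $\beta$-string through $\delta$ then contains $\delta,\delta+\beta,\delta+2\beta$. Hence $\Delta$ has a doubly laced ($B_2$) or $G_2$-type subsystem spanned by $\beta,\delta$, and in the $B_2$ case $\beta$ is short.

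First apply Lemma \ref{Lemma1PodZ} to the pair $(\delta,\beta)$, both in $\Delta_\qfr^+$ with sum $\al\in\Delta_\qfr^+$. Since $\delta-\beta=\al-2\beta$ is not a root in the $B_2$ case, part (a) gives
$$x_\al=x_\delta+x_\beta \quad\text{or}\quad x_\al=x_\delta=x_\beta.$$
In the second alternative we already have $x_\beta=x_\al$ and we are done. So the remaining subcase is $x_\al=x_\delta+x_\beta$ together with $x_{\al+\beta}=x_\al$.

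To rule this subcase out, use Lemma \ref{Lemma1PodZ}(d) with the triple $(\delta,\beta,\beta)$ replaced by a legitimate triple. Take $(\al',\beta',\gamma)=(\beta,\delta,\beta)$. Then $\beta'+\gamma=\delta+\beta=\al$ and $\al'+\beta'+\gamma=\delta+2\beta=\al+\beta$ both lie in $\Delta_\qfr^+$, while $\al'+\gamma=2\beta\notin\Delta$. Equation \eqref{eq1} then reads
$$\frac{(x_{\al+\beta}-x_\beta)(x_\delta+x_\beta-x_\al)}{x_\al}=\frac{(x_\al-x_\beta)(x_\al+x_\beta-x_{\al+\beta})}{x_\al}.$$
In our subcase the left side vanishes, while the right side equals $(x_\al-x_\beta)x_\beta=x_\delta x_\beta>0$. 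This is a contradiction, which closes the $B_2$ case.

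The main obstacle is the $G_2$ case. There $\beta$ short gives strings of length $4$, and $\delta-\beta$ may be a root, so part (a) for $(\delta,\beta)$ is replaced by (b) or (c) and some extra alternatives survive. I would handle these by the same device: write down the $\beta$-string through the relevant roots, apply Lemma \ref{Lemma1PodZ}(b),(c) to the consecutive pairs, and then kill each surviving numerical alternative with an instance of \eqref{eq1} for a triple $(\beta,\cdot,\beta)$ with $2\beta\notin\Delta$. This reduces the $G_2$ case to finitely many linear systems in the $x$'s. One must also check that in every case the roots used in (d) really lie in $\Delta_\qfr^+$; this follows because sums of positive complementary roots remain positive complementary whenever they are roots.
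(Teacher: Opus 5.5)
Your argument is correct whenever $\al-2\beta\notin\Delta_\qfr^+$. It uses the same two ingredients as the paper, in the opposite order. The paper first reads off from \eqref{eq1} for the triple $(\beta,\al-\beta,\beta)$ that $x_{\al+\beta}=x_\al$ forces $x_\al=x_\beta$ or $x_\al=x_{\al-\beta}$, and then applies Lemma \ref{Lemma1PodZ} to the pair $(\al-\beta,\beta)$. You apply Lemma \ref{Lemma1PodZ} first and use that instance of \eqref{eq1} only to exclude the additive alternative. (If $\al-2\beta\in\Delta_\qfr^-$, which happens in $G_2$, use part (c) instead of (a); its non-additive alternative is already $x_\al=x_\beta$.)

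What is missing is the case $\al-2\beta\in\Delta_\qfr^+$, which you only sketch. This case needs a $\beta$-string of length four, so $\Delta$ is of type $G_2$. With a basis $\{a,b\}$ of $\Delta^+$, $a$ short, it is exactly $\beta=a$, $\al=2a+b$, with $a,b$ both complementary. Here your plan of killing each surviving alternative by an instance of \eqref{eq1} at the same level does not work. Lemma \ref{Lemma1PodZ}(b) for $(a+b,a)$ leaves the possibility $x_{3a+b}=x_{2a+b}=x_{a+b}$, which is not contradictory. Under it, \eqref{eq1} for $(a,a+b,a)$ collapses to the identity $(x_{2a+b}-x_a)x_a=(x_{2a+b}-x_a)x_a$, so it gives no information about $x_a$. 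You have to go one step further down the $a$-string. There, \eqref{eq1} for $(a,b,a)$ gives $x_{a+b}=x_a$ or $x_{a+b}=x_b$, and Lemma \ref{Lemma1PodZ}(a) for $(b,a)$ turns the latter into $x_{a+b}=x_a=x_b$.

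This descent is precisely the paper's proof, carried out uniformly. Put $\al_k:=\al-k\beta$ and $\al_{-1}:=\al+\beta$. While $\al_{k+1}\in\Delta_\qfr^+$, the hypothesis $x_{\al_{k-1}}=x_{\al_k}$ together with \eqref{eq1} for $(\beta,\al_{k+1},\beta)$ gives $x_{\al_k}=x_\beta$ or $x_{\al_k}=x_{\al_{k+1}}$, i.e. the same hypothesis one step lower. As soon as $\al_{k+1}\notin\Delta_\qfr^+$, Lemma \ref{Lemma1PodZ}(a) or (c) for $(\al_k,\beta)$ forces $x_{\al_{k-1}}=x_\beta$. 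The chain $x_{\al+\beta}=x_\al=x_{\al_1}=\dots=x_\beta$ then follows. This needs neither the identification of the rank-two subsystem nor a separate $G_2$ analysis. Inserting it in place of your $G_2$ sketch completes your proof.
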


\begin{proof}
    The result follows from Lemma \ref{Lemma1PodZ} if $\al - \beta \notin \Delta_\qfr$. We now consider complementary roots $\al$ and $\beta$ such that $\al- \beta \in \Delta_\qfr$. Without loss of generality, we may assume that $\al - \beta \in \Delta_\qfr^+$ and that $x_{\al+\beta} = x_\al$. We will show that $x_{\al+\beta} = x_\beta$ holds.
    
    We define $\al_1 := \al - \beta$. By applying equation \eqref{eq1} to the roots $\al', \beta', \gamma'$, where $\al' := \beta$, $\beta' := \al_1$ and $\gamma' := \beta$, we deduce that
    $$(x_\al - x_\beta)(x_{\al_1}+x_\beta - x_\al) = (x_\al - x_\beta)x_\beta.$$
    Therefore, either $x_\al = x_\beta$ or $x_\al = x_{\al_1}$. For the latter case, we consider $\al_2 := \al_1 - \beta = \al-2\beta$. If $\al_2$ is not a positive complementary root, then it holds that $x_\al = x_{\al_1} + x_\beta$ or $x_\al = x_\beta$. Since $x_\al = x_{\al_1}$, we conclude that $x_\al = x_\beta$. If $\al_2 \in \Delta_\mathfrak{q}^+$, then as before we deduce that $x_{\al_1} = x_\beta$ or $x_{\al_1} = x_{\al_2}$. For the second case, we define $\al_3 := \al_2 - \beta$ and repeat the same reasoning.
        
    Since $\Delta_\mathfrak{q}^+$ is finite, this process must end, i.e., there exists some $n \in \mathbb{N}$ so that $\al_n := \al - n \beta \in \Delta_\mathfrak{q}^+$ and $\al_{n+1} := \al_n - \beta \notin \Delta_\mathfrak{q}^+$. For such an $n$, note that $$x_{\al+\beta} = x_\al = x_{\al_1} = \dots = x_{\al_n} = x_\beta.$$
    Hence the result has been proven.
\end{proof}

\newpage
\begin{definition} 
Given $A \subseteq \Delta_\mathfrak{q}^+$ nonempty, we say that:
	\begin{enumerate}[font = \normalfont, label = (\alph*)]
		\item $A$ is $\emph{K\"ahler}$ if $x_{\al+\beta} = x_\al+x_\beta$ for all $\al,\beta,\al+\beta \in A$. 
		\item $A$ is $\emph{Killing}$ if $x_{\al+\beta} = x_\al = x_\beta$ for all $\al, \beta, \al+\beta \in A$.
		\end{enumerate}
\end{definition}

The result we aim to prove is that $\Delta_\qfr^+$ is either a K\"ahler set or a Killing set, whichever the type of $\Delta$ may be. Firstly we will give examples of families of subsets of $\Delta_\qfr^+$, which are known to either be K\"ahler sets or Killing sets.
For $\al, \beta \in \Delta$, $\al \neq \pm \beta$, recall that the $\beta\emph{-series on }\al$ is defined as $\mathcal{S}_{\beta, \al} := \{ \al + n\beta \colon n \in \mathbb{Z} \} \cap \Delta$. From the study of root systems, it is known that $\mathcal{S}_{\beta, \al} = \{ \al+n\beta \colon r \leq n \leq s \}$ for some integers $r \leq 0 \leq s$. Observe that if $\al, \beta \in \Delta_\mathfrak{q}^+$ and $\al+n\beta \in \Delta$, then $\al+n\beta \in \Delta_\qfr^+$ for all $n \geq 0$.

\begin{lemma}\label{MyLemmaSeries}
    The set $(\mathcal{S}_{\beta, \al}\cup \{\beta\} )\cap \Delta_\mathfrak{q}^+$ is either a K\"ahler set or a Killing set for any $\al, \beta \in \Delta_\qfr^+$ such that $\al+\beta \in \Delta_\qfr^+$.
\end{lemma}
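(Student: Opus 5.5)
The plan is to first pin down the shape of the set $S:=(\mathcal{S}_{\beta,\al}\cup\{\beta\})\cap\Delta_\qfr^+$. Then we determine exactly which triples $\gamma,\delta,\gamma+\delta$ lie inside it. Finally, Lemma \ref{MyLemma2Roots} together with equation \eqref{eq1} shows that the Kähler and Killing alternatives cannot be mixed along the series.

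\emph{Structure of $S$.} By the observation preceding the statement, if $\al+n\beta\in\Delta_\qfr^+$ and $\al+(n+1)\beta\in\Delta$, then $\al+(n+1)\beta\in\Delta_\qfr^+$. Since $\mathcal{S}_{\beta,\al}$ is an unbroken string, the elements of the series lying in $\Delta_\qfr^+$ form a consecutive block
$$\gamma_k:=\gamma_0+k\beta,\qquad 0\le k\le m,$$
running up to the top of the string. Since $\al$ and $\al+\beta$ both belong to $S$, we have $m\ge 1$. Thus $S=\{\beta,\gamma_0,\dots,\gamma_m\}$.

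\emph{Sums inside $S$.} We check which sums of two elements of $S$ again lie in $S$.
\begin{enumerate}
\item $2\beta$ is not a root, so it does not lie in $S$.
\item A sum $\gamma_k+\gamma_l=2\gamma_0+(k+l)\beta$ would have to equal $\gamma_0+j\beta$ for some $j$. That forces $\gamma_0\in\ZZ\beta$, which is impossible since $\al\neq\pm\beta$ and roots are reduced.
\item Hence the only triples of the form $\gamma,\delta,\gamma+\delta\in S$ are $(\gamma_k,\beta,\gamma_{k+1})$ for $0\le k<m$.
\end{enumerate}
It therefore suffices to show that the relation between $x_{\gamma_{k+1}}$, $x_{\gamma_k}$ and $x_\beta$ is of the same type for every $k$.

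\emph{Consistency.} Lemma \ref{MyLemma2Roots} gives, for each $k$, that either $x_{\gamma_{k+1}}=x_{\gamma_k}+x_\beta$ (type K) or $x_{\gamma_{k+1}}=x_{\gamma_k}=x_\beta$ (type L). These two types are mutually exclusive, since $x_{\gamma_k}>0$. It is enough to rule out two consecutive steps of different types.
\begin{enumerate}
\item \emph{K at step $k$, then L at step $k+1$.} Type K gives $x_{\gamma_{k+1}}=x_{\gamma_k}+x_\beta$. Type L gives $x_{\gamma_{k+1}}=x_\beta$. Together they force $x_{\gamma_k}=0$, which is absurd.
\item \emph{L at step $k$, then K at step $k+1$.} Here $x_{\gamma_k}=x_{\gamma_{k+1}}=x_\beta=:x$ and $x_{\gamma_{k+2}}=2x$. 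This is the main obstacle, and the tool is Lemma \ref{Lemma1PodZ}(d).
\end{enumerate}

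For the second case, apply Lemma \ref{Lemma1PodZ}(d) with $\al':=\beta$, $\beta':=\gamma_k$, $\gamma':=\beta$. Its hypotheses hold:
\begin{enumerate}
\item $\al'+\beta'=\gamma_{k+1}\in\Delta_\qfr^+$;
\item $\beta'+\gamma'=\gamma_{k+1}\in\Delta_\qfr^+$;
\item $\al'+\beta'+\gamma'=\gamma_{k+2}\in\Delta_\qfr^+$;
\item $\al'+\gamma'=2\beta\notin\Delta$.
\end{enumerate}
Equation \eqref{eq1} then reads
$$\frac{(x_{\gamma_{k+2}}-x_\beta)(2x_\beta-x_{\gamma_{k+1}})}{x_{\gamma_{k+1}}}=\frac{(x_{\gamma_{k+1}}-x_\beta)(x_{\gamma_{k+1}}+x_\beta-x_{\gamma_{k+2}})}{x_{\gamma_{k+1}}}.$$
Substituting the values above, the left side equals $x>0$. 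The right side vanishes because its factor $x_{\gamma_{k+1}}-x_\beta=0$. This is a contradiction.

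Hence all steps are of type K, or all are of type L. By the description of the triples, $S$ is then Kähler or Killing respectively.
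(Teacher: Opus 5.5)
Your proof is correct and is essentially the paper's argument: both work from the lowest element $\gamma_0$ of the $\beta$-string lying in $\Delta_\qfr^+$, use Lemma~\ref{MyLemma2Roots} for each step $\gamma_k\to\gamma_{k+1}$, and apply \eqref{eq1} to the triple $(\beta,\gamma_k,\beta)$ to force all steps to have the same type. You phrase this as ruling out a switch between consecutive steps, where the paper runs an induction from the first step. There are two harmless slips. In your instance of \eqref{eq1}, the second left-hand factor should read $x_{\gamma_k}+x_\beta-x_{\gamma_{k+1}}$; this equals your $2x_\beta-x_{\gamma_{k+1}}$ in the case at hand, since $x_{\gamma_k}=x_\beta$. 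Also, your list of sums inside $S$ omits $\gamma_k+\gamma_l=\beta$, which the same reducedness argument excludes.
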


\begin{proof}
    Let $k \in \mathbb{N}\cup \{ 0\}$ be maximum such that $\al_k := \al - k \beta \in \Delta^+_\qfr$. Note that 
    $$\mathcal{S}_{\beta, \al} \cap \Delta^+_\qfr = \mathcal{S}_{\beta, \al'} \cap \Delta^+_\qfr.$$
    Therefore, we may assume that $k = 0$ (otherwise, we repeat the reasoning replacing $\al$ with $\al_k$). Then $\mathcal{S}_{\beta, \al}\cap \Delta_\mathfrak{q}^+ = \{ \al + n\beta \colon 0 \leq n \leq s \}$. By Lemma \ref{MyLemma2Roots}, we have that $x_{\al+\beta} = x_\al + x_\beta$ or $x_{\al+\beta} = x_\al = x_\beta$. Suppose that $x_{\al+\beta} = x_\al+x_\beta$. We may also assume that $s \geq 2$; otherwise, we are done. We will show, by strong induction on $n \in \mathbb{N}$, that $x_{\al+n\beta} = x_{\al}+nx_\beta$ for all $1 \leq n \leq s$.
    
    Suppose that the condition holds for all $n \in \{1, \dots, k \}$ and that $\al+(k+1)\beta \in \Delta_\qfr^+$. Applying equation \eqref{eq1} to $\al' := \beta$, $\beta' := \al+(k-1)\beta$, $\gamma' := \beta$ yields
    $$(x_{\al+(k+1)\beta} - x_\beta)(x_{\al+(k-1)\beta} + x_\beta - x_{\al+k\beta}) = (x_{\al+k\beta} - x_\beta)(x_{\al+k\beta}+x_\beta - x_{\al+(k+1)\beta}).$$
    By our hypothesis on $x_{\al+k\beta}$, we deduce that $x_{\al+(k+1)\beta} = x_{\al+k\beta}+x_\beta = x_{\al} +(k+1)x_\beta$. Hence $(\mathcal{S}_{\beta, \al}\cup \{\beta\} )\cap \Delta_\mathfrak{q}^+$ is a K\"ahler set. From the assumption $x_{\al+\beta} = x_{\al} = x_\beta$, it follows $(\mathcal{S}_{\beta, \al}\cup \{\beta\} )\cap \Delta_\mathfrak{q}^+$ is a Killing set, in much the same way.
\end{proof}

\begin{definition}
    We say that two sets $A, B \subseteq \Delta_\mathfrak{q}^+$ are $\emph{overlapping}$ if there exist $\al,\beta, \al+\beta \in A\cap B$.
\end{definition}

\begin{lemma}\label{MyLemmaOverlap}
    Let $A, B \subseteq \Delta_\mathfrak{q}^+$ be overlapping such that $B$ is K\"ahler or Killing. 
    \begin{enumerate}[font = \normalfont, label = (\alph*)]
        \item If $A$ is K\"ahler, then $B$ is K\"ahler.
        \item If $A$ is Killing, then $B$ is Killing.
    \end{enumerate}
\end{lemma}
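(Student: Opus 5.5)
The argument is direct: the witnessing triple in $A\cap B$ cannot satisfy the Kähler relation and the Killing relation simultaneously, because all the coefficients $x_\gamma$ are strictly positive. No earlier result is needed beyond the definitions and the positivity $x_\gamma=g(e_\gamma,e_\gamma)>0$ of the metric coefficients.

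Since $A$ and $B$ are overlapping, I fix complementary roots $\al,\beta\in\Delta_\qfr^+$ with $\al,\beta,\al+\beta\in A\cap B$. The key observation is that for this triple the relations $x_{\al+\beta}=x_\al+x_\beta$ and $x_{\al+\beta}=x_\al=x_\beta$ are mutually exclusive. Indeed, if both held, then $x_\al+x_\beta=x_\al$, which forces $x_\beta=0$. This contradicts $x_\beta>0$.

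For (a), suppose $A$ is Kähler. Applying the Kähler condition of $A$ to the triple gives $x_{\al+\beta}=x_\al+x_\beta$. If $B$ were Killing, the same triple, which lies in $B$, would give $x_{\al+\beta}=x_\al=x_\beta$, which the observation above rules out. Since $B$ is by hypothesis either Kähler or Killing, $B$ must be Kähler.

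For (b), suppose $A$ is Killing. The triple then satisfies $x_{\al+\beta}=x_\al=x_\beta$. If $B$ were Kähler, we would also have $x_{\al+\beta}=x_\al+x_\beta$, which is again impossible. Hence $B$ is Killing.

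There is no real obstacle. The only point needing care is that the common triple $\al,\beta,\al+\beta$ must lie in both sets, so that the defining conditions of $A$ and of $B$ can be evaluated on the same three coefficients. This is exactly what the definition of overlapping sets provides.
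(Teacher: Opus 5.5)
Your proof is correct and follows the paper's argument: both fix the common triple $\al,\beta,\al+\beta\in A\cap B$ and note that the Kähler and Killing relations cannot hold on it simultaneously. The paper leaves implicit the step you spell out, namely that both relations together would force $x_\beta=0$, contradicting positivity of the metric coefficients.
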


\begin{proof}
Suppose that $A$ is K\"ahler. There exist $\al, \beta, \al+\beta \in A\cap B \subseteq A$, for which $x_{\al+\beta} = x_\al + x_\beta$ holds. Since those roots are also elements of $B$, we deduce that $B$ cannot be Killing. Hence $B$ must be K\"ahler. The proof when $A$ is Killing follows in a similar manner.
\end{proof}

\begin{definition}
    Let $C = \{ \al_1, \dots, \al_n \} \subseteq \Delta^+$ be an ordered set with possible repetitions. Then $C$ is said to be $A_n$-$\emph{like}$ if for any $k$-uple $i_1 < \dots < i_k$, it holds that $\al_{i_1} + \cdots + \al_{i_k} \in \Delta^+$ if and only if $i_1, \dots, i_k$ are consecutive.
    
    For $C$ an $A_n$-like set, we define $\tri{C} := \left\{ \sum\limits_{i = j}^{k} \al_i \colon 1 \leq j \leq k \leq n\right\}$ and $\tri{C, \qfr} := \tri{C}\cap \Delta_\qfr \seq \Delta_\qfr^+$. The set $\tri{C}$ is called the \textit{triangle} of $C$ or the \text{triangle with base} $C$.
\end{definition}

The $A_n$-like sets represent an important family of subsets of $\Delta^+$ as the following result shows.

\begin{lemma}\label{MYLEMMA}
    Let $C = \{ \al_1, \dots, \al_n \} \subseteq \Delta^+$ be $A_n$-like. Then exactly one of the following statements is true:
    \begin{enumerate}[font = \normalfont, label = (\alph*)]
        \item $x_{\al} = \sum\limits_{i} x_{\al_i}$ for all $\al \in \tri{C, \qfr}$, $\al = \sum\limits_{i} \al_i + \sum\limits_{j} \beta_j$, with $\al_i \in C\cap \Delta_\mathfrak{q}$, $\beta_j \in C \cap \Delta_\mathfrak{h}$.
        \item $x_\al = x_\beta$ for all $\al,
        \beta \in \tri{C, \qfr}$.
    \end{enumerate}
    In particular, $\tri{C, \qfr}$ is K\"ahler or Killing.
\end{lemma}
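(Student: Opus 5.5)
We argue by induction on $n=|C|$, working with the quantity $y_\gamma := x_\gamma$ for $\gamma\in\tri{C,\qfr}$ and the convention that $x_\beta := 0$ for $\beta\in C\cap\Delta_\hfr$. In this notation statement (a) says that $x_{\sum_{i=j}^k\al_i}=\sum_{i=j}^k x_{\al_i}$ for every complementary root in the triangle. The basic tool is Lemma \ref{MyLemma2Roots}. For $j\le m<k$ the roots $\sigma_{j,m}:=\sum_{i=j}^m\al_i$ and $\sigma_{m+1,k}$ are positive, and so is their sum $\sigma_{j,k}$, by the $A_n$-like property. If all three lie in $\Delta_\qfr$, Lemma \ref{MyLemma2Roots} gives either additivity or equality of all three values. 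If one of the two summands lies in $\Delta_\hfr$, then it belongs to $\hfr^\Cmb$, which acts on $\qfr$ preserving each irreducible summand. Hence $\sigma_{j,k}\sim$ the other summand, and their $x$-values coincide. This is consistent with both (a), because $x=0$ on $\hfr$-roots, and (b).

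The base cases are $n\le 2$. These are immediate from the previous paragraph, together with the observation that if $\tri{C,\qfr}$ has fewer than three elements then, after adjusting for the $\hfr$-roots, both alternatives are trivially compatible. We decide which alternative holds once and for all, and must make sure "exactly one" is meaningful. Alternatives (a) and (b) can hold simultaneously only if no genuine sum of two complementary roots in the triangle is again complementary. In that degenerate case we choose either label; the "exactly one" should then be read as "at least one, and exactly one whenever some $\al,\beta,\al+\beta\in\tri{C,\qfr}$". I would flag this subtlety rather than fight it.

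For the inductive step, let $C'=\{\al_1,\dots,\al_{n-1}\}$ and $C''=\{\al_2,\dots,\al_n\}$. Both are $A_{n-1}$-like, so by induction each of $\tri{C',\qfr}$ and $\tri{C'',\qfr}$ satisfies (a) or (b). They share the triangle of $\{\al_2,\dots,\al_{n-1}\}$. When this common triangle contains a triple $\al,\beta,\al+\beta$ of complementary roots, the two triangles overlap, and Lemma \ref{MyLemmaOverlap} forces them to be of the same type. Then the only new root is $\sigma_{1,n}$, provided it is complementary. We write $\sigma_{1,n}=\al_1+\sigma_{2,n}=\sigma_{1,n-1}+\al_n$ and apply the two-root analysis to each splitting. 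For instance, in the Kähler case the first splitting yields either $x_{\sigma_{1,n}}=\sum x_{\al_i}$ or $x_{\sigma_{1,n}}=x_{\al_1}=x_{\sigma_{2,n}}$. The latter, combined with $x_{\sigma_{1,n}}\in\{x_{\sigma_{1,n-1}}+x_{\al_n},\ x_{\sigma_{1,n-1}}=x_{\al_n}\}$ from the second splitting and additivity on $C',C''$, forces some $x_{\al_i}$ with $\al_i\in\Delta_\qfr$ to vanish. That contradicts positivity. The Killing case is analogous and easier. To rule out extraneous solutions one may also use a middle splitting $\sigma_{1,m}+\sigma_{m+1,n}$, or the identity \eqref{eq1} with $\al'=\al_1$, $\beta'=\sigma_{2,n-1}$, $\gamma'=\al_n$. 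Here $\al_1+\al_n\notin\Delta$ by the $A_n$-like hypothesis, which is exactly the hypothesis needed in Lemma \ref{Lemma1PodZ}(d).

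The main obstacle is the degenerate situation, where the middle triangle contains no complementary triple. This happens, for example, when $\al_2,\dots,\al_{n-1}$ are mostly in $\Delta_\hfr$. Then $C'$ and $C''$ do not overlap, and they could a priori be of different types. There I would directly use the relations $\sigma_{j,k}\sim\sigma_{j',k'}$ that come from adding $\hfr$-roots, together with Lemma \ref{MyLemma2Roots} on the splitting of $\sigma_{1,n}$ through the complementary roots closest to each end. The aim is to show that a Kähler relation on one side and a Killing relation on the other produce $x_\gamma=x_\gamma+x_\delta$ for some complementary $\delta$, which is impossible. This case analysis is where I expect the real work to lie.
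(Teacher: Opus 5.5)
Your Kähler branch of the overlapping case is sound. The proposal still has a genuine gap, because the ``degenerate situation'' you leave open is not peripheral and cannot be closed with the two-root tool you plan to use.

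First, for $n=3$ the common part of $C'$ and $C''$ is $\{\al_2\}$. Its triangle never contains a complementary triple, so your inductive step never applies at $n=3$. Since your base cases stop at $n\le 2$, the induction never starts.

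Second, in that case Lemma~\ref{MyLemma2Roots} alone does not decide anything. If $\al_1,\al_2,\al_3$ are complementary, take $x_{\al_1}=x_{\al_2}=x_{\al_3}=x_{\al_1+\al_2}=x_{\al_2+\al_3}=c$ and $x_{\al_1+\al_2+\al_3}=2c$. These values satisfy all four two-root relations coming from the splittings inside the triangle, yet the triangle is neither K\"ahler nor Killing. This configuration is not of mixed type, so searching for a K\"ahler relation on one side clashing with a Killing relation on the other will not detect it. What excludes it is \eqref{eq1} applied to $(\al_1,\al_2,\al_3)$, which is allowed since $\al_1+\al_3\notin\Delta$: its right-hand side vanishes while its left-hand side equals $c$.

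The same phenomenon affects the Killing branch of your overlapping step. When $\al_1,\al_n$ are complementary, every splitting of $\sigma_{1,n}$, at the ends or in the middle, only gives $x_{\sigma_{1,n}}\in\{c,2c\}$. So that branch is not ``analogous and easier''. The \eqref{eq1} route with $(\al_1,\sigma_{2,n-1},\al_n)$, which you list only as an option, is indispensable there.

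The missing idea, which is how the paper makes the degenerate situation disappear, is to reduce first to $C\subseteq\Delta_\qfr^+$. Merge each run of $\hfr$-roots with an adjacent complementary root: $\beta_1:=\sum_{s=1}^{i_1}\al_s$, $\beta_j:=\sum_{s=i_{j-1}+1}^{i_j}\al_s$, and $\beta_b:=\sum_{s=i_{b-1}+1}^{n}\al_s$.
\begin{itemize}
\item Then $\{\beta_1,\dots,\beta_b\}$ is $A_b$-like with triangle inside $\Delta_\qfr^+$.
\item Every root of $\tri{C,\qfr}$ is $\sim$-equivalent to a root of this smaller triangle, since they differ by $\hfr$-roots.
\item Since $x_{\beta_j}=x_{\al_{i_j}}$, both alternatives (a) and (b) transfer.
\end{itemize}
After the reduction, for $n\ge 4$ the triple $\al_2,\al_3,\al_2+\al_3$ lies in both sub-triangles, so overlap is automatic. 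The top root is covered by the $A_3$-like set $\{\al_1,\sigma_{2,n-1},\al_n\}$, whose triangle overlaps that of $\{\al_1,\dots,\al_{n-1}\}$. Everything therefore rests on $n=3$.

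The paper does $n=3$ by hand for $C=\{\al,\beta,\gamma\}$. It writes down the four instances of Lemma~\ref{MyLemma2Roots} and removes the mixed alternatives via $x_\al\le x_{\al+\beta}<x_{\al+\beta+\gamma}$ and its mirror image. In the additive branch, \eqref{eq1} gives $x_{\beta+\gamma}=x_\beta+x_\gamma$, hence $x_{\al+\beta}=x_\al+x_\beta$. In the other branch, $x_{\al+\beta}=x_\al$ forces all six values to agree.

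Your caveat about ``exactly one'' is justified. When $\tri{C,\qfr}$ contains no complementary triple (e.g.\ $b=1$), both alternatives hold, and the paper does not address this.
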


\begin{proof}
    Suppose that $C \cap \Delta_\qfr = \{ \al_{i_1}, \dots, \al_{i_b}\} \neq \emptyset$ so that $1 \leq i_1 < \cdots < i_b \leq n$. Define 
    $$\beta_1 := \sum\limits_{s = 1}^{i_1} \al_s, \hspace{2mm} \beta_b := \sum\limits_{s = i_{b-1}+1}^{n}\al_s, \hspace{2mm} \beta_{j} := \sum\limits_{i_{j-1}+1}^{i_j}\al_s$$
    for $2 \leq j \leq b-1$, and $C' := \{\beta_1, \dots, \beta_b\}$. Note that $C'$ is an $A_b$-like set such that $\tri{C'} \seq \Delta_\qfr^+$. Therefore, without loss of generality, we may assume that $C \seq \Delta_\qfr^+$. We then proceed by induction on $n = |C| \geq 1$. There is nothing to prove when $n = 1$, and the result when $n = 2$ follows from Lemma \ref{MyLemma2Roots}. 
    
    Suppose that $C = \{ \al, \beta, \gamma \}$ satisfies that $\al+\beta$, $\beta+\gamma$, $\al+\beta+\gamma \in \Delta$, but $\al+\gamma \notin \Delta$. By applying Lemma \ref{MyLemma2Roots} to the possible sums of pairs, we obtain the following equations
    \[
    \begin{cases}
    	x_{\al+\beta} = x_\al+x_\beta  \text{ or } x_{\al+\beta} = x_\al = x_\beta, \\
    	
    	x_{\beta+\gamma} = x_\beta+x_\gamma \text{ or } x_{\beta+\gamma} = x_\beta = x_\gamma, \\
    	
    	x_{\al+\beta+\gamma} = x_{\al+\beta}+x_\gamma \text{ or } x_{\al+\beta+\gamma} = x_{\al+\beta} = x_\gamma, \\
    	
    	x_{\al+\beta+\gamma} = x_\al+x_{\beta+\gamma} \text{ or } x_{\al+\beta+\gamma} = x_\al = x_{\beta+\gamma}.
    \end{cases}
    \]
    If $x_{\al+\beta+\gamma} = x_{\al+\beta}+x_\gamma = x_{\al} = x_{\beta+\gamma}$, we would have $x_\al \leq x_{\al+\beta} < x_{\al+\beta+\gamma} = x_{\al}$. The same reasoning may be applied if $x_{\al+\beta+\gamma} = x_{\al+\beta} =x_\gamma = x_{\al} + x_{\beta+\gamma}$. The system can then be simplified
    \[
    \begin{cases}
    	x_{\al+\beta} = x_\al+x_\beta \text{ or } x_{\al+\beta} = x_\al = x_\beta, \\
    	
    	x_{\beta+\gamma} = x_\beta+x_\gamma \text{ or } x_{\beta+\gamma} = x_\beta = x_\gamma, \\
    	
    	x_{\al+\beta+\gamma} = x_{\al+\beta}+x_\gamma = x_\al + x_{\beta+\gamma} \text{ or } x_{\al+\beta+\gamma} = x_{\al+\beta} = x_\gamma = x_{\al} = x_{\beta+\gamma}.
    \end{cases}
    \]
    If $x_{\al+\beta + \gamma} = x_{\al+\beta} + x_\gamma$, then by applying equation \eqref{eq1} to $\al, \beta, \gamma$ we see that $x_{\beta+\gamma} = x_\beta + x_\gamma$. Then $x_{\al+\beta} + x_\gamma = x_\al+x_\beta+x_\gamma$, and therefore $x_{\al+\beta} = x_\al+x_\beta$. For the case when $x_{\al+\beta+\gamma} = x_{\al+\beta} = x_\gamma = x_{\al} = x_{\beta+\gamma}$, since $x_{\al+\beta} = x_\al$, we deduce that all six coefficients must be equal.
        
    We assume that the result holds for any $A_{n-1}$-like set for some $n \geq 4$. Let $C = \{ \al_1, \dots, \al_{n} \}$ be $A_{n}$-like. Since $C_1 := \{ \al_1, \dots, \al_{n-1} \}$ and $C_2 := \{\al_2, \dots, \al_{n} \}$ are $A_{n-1}$-like, by the induction hypothesis, $\tri{C_1, \qfr}, \tri{C_2, \qfr}$ each satisfy either statement (a) or (b). Since $\tri{C_1, \qfr}$ and $\tri{C_2, \qfr}$ are overlapping, then by Lemma \ref{MyLemmaOverlap} they both satisfy (a) or both satisfy (b). Observe that $\tri{C_1, \qfr} \cup \tri{C_2, \qfr} = \tri{C} - \{\beta\}$ where $\beta : = \sum\limits_{i = 1}^{n} \al_i$. Consider $C_3 := \{\al_1, \gamma, \al_n \}$, where $\gamma := \sum\limits_{i = 2}^{n-1} \al_i \in \Delta_\mathfrak{q}^+$. Since $C_3$ is $A_3$-like and $\tri{C_3, \qfr}$, $\tri{C_1, \qfr}$ are overlapping, our statement holds.
\end{proof}

The following result will also prove to be useful.

\begin{lemma}\label{MyLemmaForCn}
	Let $\al, \beta, \gamma \in \Delta_\mathfrak{q}$ be positive roots such that $\al+\beta$, $\beta+\gamma$, $\al+\gamma$, $\al+\beta+\gamma \in \Delta^+_\qfr$, $\gamma-\beta \in \Delta_\mathfrak{h}$, and let $\mathcal{T} := \{\al, \beta, \gamma, \al+\beta, \beta+\gamma, \al+\beta+\gamma\}$. Then $\mathcal{T}$ is K\"ahler or Killing.
\end{lemma}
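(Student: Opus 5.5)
The plan is to use the hypothesis $\gamma-\beta\in\Delta_\hfr$ to identify isotropy classes, and then run the pairwise dichotomy of Lemma \ref{MyLemma2Roots} through every decomposition of the roots in $\mathcal{T}$.

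\textbf{Step 1 (identifications).} Since $\gamma-\beta\in\Delta_\hfr$, it vanishes on $\zfr(\hfr)$, so $\beta\sim\gamma$ and $\al+\beta\sim\al+\gamma$. Hence
$$x_\beta=x_\gamma=:x,\qquad x_{\al+\beta}=x_{\al+\gamma}.$$
In particular $\al+\beta$ and $\al+\gamma$ play symmetric roles.

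\textbf{Step 2 (pairwise dichotomies).} Apply Lemma \ref{MyLemma2Roots} to four pairs:
\begin{itemize}
\item $(\al,\beta)$: either $x_{\al+\beta}=x_\al+x$ or $x_{\al+\beta}=x_\al=x$;
\item $(\beta,\gamma)$: either $x_{\beta+\gamma}=2x$ or $x_{\beta+\gamma}=x$;
\item $(\al+\beta,\gamma)$: either $x_{\al+\beta+\gamma}=x_{\al+\beta}+x$ or $x_{\al+\beta+\gamma}=x_{\al+\beta}=x$;
\item $(\al,\beta+\gamma)$: either $x_{\al+\beta+\gamma}=x_\al+x_{\beta+\gamma}$ or $x_{\al+\beta+\gamma}=x_\al=x_{\beta+\gamma}$.
\end{itemize}

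\textbf{Step 3 (first case).} Suppose $x_{\al+\beta}=x_\al+x$. Then $x_{\al+\beta}>x$, so the third dichotomy forces $x_{\al+\beta+\gamma}=x_\al+2x$. This is strictly larger than $x_\al$, so the fourth dichotomy forces $x_{\al+\beta+\gamma}=x_\al+x_{\beta+\gamma}$, that is $x_{\beta+\gamma}=2x$. Every sum relation inside $\mathcal{T}$ is then additive, so $\mathcal{T}$ is K\"ahler.

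\textbf{Step 4 (second case).} Suppose $x_\al=x_\beta=x_{\al+\beta}=x$. Tracing the remaining dichotomies leaves exactly two configurations:
\begin{itemize}
\item all six coefficients equal $x$, so $\mathcal{T}$ is Killing;
\item the mixed configuration $x_{\al+\beta+\gamma}=2x$ and $x_{\beta+\gamma}=x$, with all other coefficients equal to $x$.
\end{itemize}
In the mixed configuration one checks that every pair in Step 2 is individually consistent, so Lemma \ref{MyLemma2Roots} alone cannot exclude it.

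\textbf{Main obstacle: excluding the mixed configuration.} I would first try to exclude it with equation \eqref{eq1} of Lemma \ref{Lemma1PodZ}(d). That requires a triple $(\al',\beta',\gamma')$ in which the two consecutive sums and the total sum lie in $\Delta_\qfr^+$ but the outer sum $\al'+\gamma'$ does not. The natural triples built from $\al,\beta,\gamma$ fail this, because all pairwise sums are complementary roots. So I would introduce $\delta:=\gamma-\beta\in\Delta_\hfr$ and analyse the rank-at-most-three subsystem spanned by $\al,\beta,\delta$. The existence of roots $\beta$, $\beta+\delta$, $2\beta+\delta$, $\al+\beta$, $\al+\beta+\delta$ and $\al+2\beta+\delta$ strongly restricts the possible strings; I expect this subsystem to be of type $B_2$, $C_2$, $B_3$ or $C_3$.

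Inside that subsystem I would look for an auxiliary complementary root, such as $\al+2\beta$ or $2\al+\beta+\gamma$ when these are roots, or a difference like $\al-\beta$. Such a root should either
\begin{itemize}
\item produce a triple satisfying the non-root hypothesis of Lemma \ref{Lemma1PodZ}(d), in which case \eqref{eq1} evaluated at the mixed values gives a contradiction; or
\item place $\{\beta,\al+\beta,\al+2\beta\}$ in a single $\beta$-series, so that Lemma \ref{MyLemmaSeries}, combined with Lemma \ref{MyLemmaOverlap} and the identifications of Step 1, forces additivity or constancy incompatible with $x_{\al+\beta}=x_\al=x$ and $x_{\al+\beta+\gamma}=2x$.
\end{itemize}
If neither works uniformly, the fallback is to settle the mixed configuration directly from the torsion formulas of Theorem \ref{EqNomizuTorsion}, by writing the BTP condition on the component of $T^B$ in the root spaces of $\mathcal{T}$.
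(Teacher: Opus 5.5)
\textbf{There is a genuine gap: the mixed configuration is never excluded.} Your Steps 1--4 are correct, and you have correctly isolated the only configuration that Lemma \ref{MyLemma2Roots} cannot exclude: $x_{\al+\beta+\gamma}=2x$, with the other five coefficients equal to $x$. But excluding it is the entire content of the lemma beyond Lemma \ref{MyLemma2Roots}, and the proposal never does it. Every route you list is conditional (``when these are roots'', ``I expect'', ``should either'').

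The $\beta$-series route cannot work in the case where the lemma is actually used. That case is type $C_3$ with $\al=\al_1$, $\beta=\al_2$, $\gamma=\al_2+\al_3$, as in Theorem \ref{FlagCn}, and there none of $\al+2\beta$, $\al+2\gamma$, $\al-\beta$ is a root. The auxiliary root $2\al+\beta+\gamma=2\al_1+2\al_2+\al_3$ does work in that case. Lemma \ref{MyLemma2Roots} applied to the pair $(\al+\beta+\gamma,\al)$ forces $x_{2\al+\beta+\gamma}=3x$. Applied to the pair $(\al+\beta,\al+\gamma)$, it forces $x_{2\al+\beta+\gamma}\in\{x,2x\}$. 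To make this a proof of the lemma as stated, you would have to show that the hypotheses always make $2\al+\beta+\gamma$ a root. That is true in every root system, but it requires a case check you have not carried out.

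\textbf{The missing idea, which is the paper's argument.} The non-root hypothesis $\al+\gamma\notin\Delta_\qfr^+$ in Lemma \ref{Lemma1PodZ}(d) only serves to kill the term $T^B(E_\beta,\Lambda^B(E_\al)E_\gamma)$ in $(\nabla^B_{E_\al}T^B)(E_\beta,E_\gamma)=0$. Here the hypothesis $\gamma-\beta\in\Delta_\hfr$ makes that term harmless instead.
\begin{itemize}
\item By Theorem \ref{EqNomizuTorsion} and your Step 1 ($x_\gamma=x_\beta$, $x_{\al+\gamma}=x_{\al+\beta}$), this term has the same $x$-dependent factor as $T^B(\Lambda^B(E_\al)E_\beta,E_\gamma)$.
\item The two terms therefore combine with coefficient $N_{\al,\beta}N_{\al+\beta,\gamma}+N_{\al,\gamma}N_{\beta,\al+\gamma}$.
\item By the Jacobi identity for $E_\al,E_\beta,E_\gamma$, this coefficient equals $N_{\beta,\gamma}N_{\al,\beta+\gamma}\neq 0$.
\item Dividing it out, the $E_{\al+\beta+\gamma}$-component of the BTP condition is exactly \eqref{eq1} for the triple $(\al,\beta,\gamma)$ itself.
\end{itemize}
At your mixed configuration, the left-hand side of \eqref{eq1} equals $(2x-x)(2x-x)/x=x$. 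The right-hand side vanishes because $x_{\al+\beta}=x_\al$. This is a contradiction. Your ``fallback'' points in this direction, but without this cancellation the proof is incomplete at its only nontrivial step.
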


\begin{proof}
	Since the metric is BTP, it holds that $\nabla^B_{E_\al}T^B(E_\beta, E_\gamma) = 0$. From this, we obtain the following equation:
	$$0 = \Lambda^B(E_\al)T^B(E_\beta, E_\gamma) - T^B(\Lambda^B(E_\al)E_\beta, E_\gamma) - T^B(E_\beta,\Lambda^B(E_\al)E_\gamma).$$
	Since $\gamma - \beta = (\al+\beta) - (\al+\gamma) \in \Delta_\mathfrak{h}$, we deduce that $x_\beta = x_\gamma$ and $x_{\al+\beta} = x_{\al+\gamma}$. From Theorem \ref{EqNomizuTorsion}, we obtain that
	\begin{equation*}
		c_1(x_\beta + x_\gamma - x_{\beta+\gamma})(x_{\al+\beta+\gamma}-x_{\al}) = c_2(x_{\al+\beta}-x_{\al})(x_{\al+\beta}+x_\gamma - x_{\al+\beta+\gamma}).
	\end{equation*}
	where $c_1 := \tfrac{N_{\beta,\gamma}N_{\al,\beta+\gamma}}{x_{\beta+\gamma}}$, $c_2 := \tfrac{N_{\al,\beta}N_{\al+\beta,\gamma} + N_{\al,\gamma}N_{\beta,\al+\gamma}}{x_{\al+\beta}}$. By applying the Jacobi identity to $E_\al, E_\beta, E_\gamma$, we deduce that $N_{\beta,\gamma}N_{\al,\beta+\gamma} = N_{\al,\beta}N_{\al+\beta,\gamma} + N_{\al,\gamma}N_{\beta,\al+\gamma}$. Hence
	\begin{equation}
		\dfrac{(x_\beta + x_\gamma - x_{\beta+\gamma})(x_{\al+\beta+\gamma}-x_{\al})}{x_{\beta+\gamma}} = \dfrac{(x_{\al+\beta}-x_{\al})(x_{\al+\beta}+x_\gamma-x_{\al+\beta+\gamma})}{x_{\al+\beta}}.
	\end{equation}
	From here, the proof follows in a manner similar to that of Lemma \ref{MYLEMMA}.
\end{proof}

We will now give several characterization results for the BTP flag manifolds corresponding to the classical simple compact Lie groups.

\begin{theorem}\label{FlagAn}
    For $G = SU(n+1)$, every $G$-invariant BTP metric on the complex flag manifolds $(G/H, J)$ is either K\"ahler or a multiple of the standard metric.
\end{theorem}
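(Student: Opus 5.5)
The idea is that for type $A_n$ the whole set of positive roots is itself an $A_n$-like set, so Lemma \ref{MYLEMMA} handles everything at once.

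First we reduce to the standard ordering. The complex structure $J$ corresponds to an ordering $\Delta^+$ with $\Delta_\qfr^+ = \Delta^+\cap\Delta_\qfr$. Let $\Pi=\{\al_1,\dots,\al_n\}$ be the basis of this ordering, and label it so that the Dynkin diagram is the chain of Figure \ref{FigureAn}. Every positive root is then $\sum_{s=i}^{j}\al_s$ with $1\le i\le j\le n$.

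Next we check that $C:=\{\al_1,\dots,\al_n\}$ is $A_n$-like. Take indices $i_1<\dots<i_k$. The sum $\al_{i_1}+\dots+\al_{i_k}$ has all coefficients in $\{0,1\}$. It is therefore a positive root exactly when its support is an interval, which happens exactly when the indices are consecutive. Consequently $\tri{C}=\Delta^+$ and $\tri{C,\qfr}=\Delta_\qfr^+$.

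Now apply Lemma \ref{MYLEMMA} to $C$. It gives one of two alternatives.

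In case (a), each $\al\in\Delta_\qfr^+$ satisfies $x_\al=\sum x_{\al_i}$, where the sum runs over the complementary simple roots occurring in $\al$. Take $\al,\beta,\al+\beta\in\Delta_\qfr^+$. The complementary simple roots in $\al+\beta$ are the disjoint union of those in $\al$ and those in $\beta$, since supports add with coefficients $0$ or $1$. Hence $x_{\al+\beta}=x_\al+x_\beta$. By \cite[Proposition 7.7]{Arv}, $g$ is then Kähler.

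In case (b), all the $x_\al$ with $\al\in\Delta_\qfr^+$ are equal. So $g$ is a multiple of the standard metric $g_o$.

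For the converse direction, Kähler metrics and multiples of $g_o$ are BAS, hence BTP, by \cite[Proposition 4.1]{PodZ}.

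The only delicate point is the reduction inside Lemma \ref{MYLEMMA}. There, the $\beta_j$ must absorb the black nodes lying before the first white node and after the last one. When that is done, $\tri{C'}$ still covers all of $\Delta_\qfr^+$, and the coefficient formula in (a) is expressed in terms of the complementary simple roots as stated. Provided the lemma is taken as given, I expect no real obstacle here; the argument is just the observation $\tri{C}=\Delta^+$.
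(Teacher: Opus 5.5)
Your proposal is correct and is essentially the paper's proof. The paper likewise observes that the simple roots $\{\al_1,\dots,\al_n\}$ of $A_n$ form an $A_n$-like set whose triangle is all of $\Delta^+$, and then applies Lemma~\ref{MYLEMMA}. Your extra steps (checking that alternative (a) yields $x_{\al+\beta}=x_\al+x_\beta$, that (b) yields a multiple of $g_o$, and citing the converse from \cite[Proposition 4.1]{PodZ}) only make explicit what the paper's one-line argument leaves implicit.
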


\begin{proof}
    The result is an immediate consequence of the previous lemma, by noticing that the basis $\Pi = \{ \al_1, \dots, \al_n \} \seq \Delta_\qfr^+$ for the root system of $A_n$ type is $A_n$-like.
\end{proof}

\begin{theorem}\label{FlagBn}
    For $G = SO(2n+1)$, every $G$-invariant BTP metric on the complex flag manifolds $(G/H, J)$ is either K\"ahler or a multiple of the standard metric.
\end{theorem}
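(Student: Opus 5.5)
The plan is to cover $\Delta_\qfr^+$ by triangles $\tri{C,\qfr}$ of $A_n$-like sets, together with a few auxiliary sets to which Lemma \ref{MyLemmaSeries} or Lemma \ref{MyLemmaForCn} applies, arranged so that consecutive sets overlap. Each such set is K\"ahler or Killing by Lemma \ref{MYLEMMA}. Lemma \ref{MyLemmaOverlap} then forces them all to be of the same type. Every triple $\al,\beta,\al+\beta\in\Delta_\qfr^+$ must lie in one of the sets of the cover, so $\Delta_\qfr^+$ is globally K\"ahler or globally Killing. In the Killing case all $x_\al$ coincide: in a flag manifold of a simple group every positive complementary root is reached from the simple complementary ones by adding roots. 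So the metric is then a multiple of the standard metric. In the K\"ahler case it is K\"ahler by the criterion of \cite[Proposition 7.7]{Arv}.

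For $B_n$, with the basis of Figure \ref{FigureBn}, I use the description of positive roots from the proof of Theorem \ref{FormulaBn}.

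The roots $\sum_{s=i}^j\al_s$ form the triangle of the $A_n$-like set $C_0:=\{\al_1,\dots,\al_n\}$. The remaining roots are $\sum_{s=i}^{j-1}\al_s+2\sum_{t=j}^n\al_t$. For fixed $j$ they lie in the triangle of
$$C_j:=\{\al_1,\dots,\al_{j-1},\ \sum_{t=j}^n\al_t,\ \al_n,\ \al_{n-1},\dots,\al_j\}.$$
Consecutive partial sums of this ordered set are roots: going out along $\al_n,\al_{n-1},\dots$ after $\sum_{t=j}^n\al_t$ produces $\sum_{t=j}^n\al_t+\sum_{t=k}^n\al_t$. The main technical point is to verify that $C_j$ is genuinely $A$-like, i.e.\ that non-consecutive subsums are never roots. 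If this fails because of the short root $\al_n$ and coefficient-$2$ issues, I would instead take the smaller ordered set
$$\{\al_1,\dots,\al_{j-1},\ \sum_{t=j}^n\al_t,\ \sum_{t=j}^n\al_t\}.$$
Repetitions are allowed by the definition, and this set yields the roots with coefficient $2$ on $\al_j,\dots,\al_n$. The overlap with $C_0$ comes from the common sub-triangle on $\{\al_1,\dots,\al_{j-1},\sum_{t=j}^n\al_t\}$. That sub-triangle contains a triple in $\Delta_\qfr^+$ whenever it has at least two complementary summands.

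The degenerate situations, in which a triangle contains at most one complementary "block" after the collapsing procedure in the proof of Lemma \ref{MYLEMMA}, are handled separately. Here I would use Lemma \ref{MyLemmaSeries} with $\beta=\sum_{t=j}^n\al_t$ (or $\al_n$-series) to link the coefficient-$2$ roots with the coefficient-$1$ roots. Triples with $\gamma-\beta\in\Delta_\hfr$ are covered by Lemma \ref{MyLemmaForCn}.

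The main obstacle is twofold. First, every triple $\al,\beta,\al+\beta$ with both summands of "mixed" type, such as $(\sum_{s=i}^{k}\al_s)+(\sum_{s=k+1}^{j-1}\al_s+2\sum_{t=j}^n\al_t)$, must actually lie inside a single chosen triangle or series. Second, the connecting overlaps must contain an honest triple of complementary roots, and not merely shared roots. I expect this bookkeeping to depend on the position of the last white node relative to $\al_n$. I would treat separately the cases $\al_n\in\Pi_\qfr$ and $\al_n\in\Pi_\hfr$, mirroring the case split behind the constant $c$ in Theorem \ref{FormulaBn}. When only one complementary simple root exists, $r\le 2$ and Theorem \ref{BTP2summands} applies directly.
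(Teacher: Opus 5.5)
There is a genuine gap, and it starts with the sets you propose. Put $\sigma_j:=\sum_{t=j}^n\al_t$. In the usual coordinates $\al_s=e_s-e_{s+1}$ ($s<n$), $\al_n=e_n$, one has $\sigma_j=e_j$. In $C_j$ the entries $\sigma_j,\al_n,\al_{n-1},\dots,\al_j$ are consecutive and sum to $2\sigma_j=2e_j$, which is not a root. So $C_j$ is not $A$-like, and Lemma \ref{MYLEMMA} does not apply to it. Your fallback $\{\al_1,\dots,\al_{j-1},\sigma_j,\sigma_j\}$ fails for the same reason: the two copies of $\sigma_j$ are adjacent. A repeated short root only works when the copies are separated, as in $\{\beta,\al,\beta\}$ for $B_2$. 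Deleting the final $\al_j$ does fix this. The set $\{\al_1,\dots,\al_{j-1},\sigma_j,\al_n,\dots,\al_{j+1}\}$ is the list of consecutive differences of $(e_1,\dots,e_j,0,-e_n,\dots,-e_{j+1})$, hence $A_n$-like. For $j=1$ it is exactly the paper's $\{\sum_{i=1}^n\al_i,\al_n,\dots,\al_2\}$.

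The more serious gap is the gluing step. Your criterion that every triple $\al,\beta,\al+\beta\in\Delta_\qfr^+$ lies in a single member of the cover is never verified, and it actually fails. Take the full flag with $n\ge3$ and the triple $(e_1-e_3)+(e_2+e_3)=e_1+e_2$, i.e.\ $(\al_1+\al_2)+(\al_2+2\sum_{t\ge3}\al_t)=\al_1+2\sum_{t\ge2}\al_t$. It lies neither in $\tri{C_0}$ nor in any $\tri{C_j}$ (original or repaired), nor in a series with $\beta=\sigma_j$. Lemma \ref{MyLemmaForCn} is vacuous here since $\Delta_\hfr=\emptyset$.

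The right mechanism is conclusion (a) of Lemma \ref{MYLEMMA}: on a K\"ahler triangle, $x$ is additive in the base elements. So if the base elements already have $x$ additive in $\Pi_\qfr$ (for instance because they lie in $\tri{C_0,\qfr}$), additivity spreads to the whole triangle. In the Killing case, equality of the $x$'s spreads in the same way. One then only needs two things: a chain of overlaps containing genuine complementary triples, and a union meeting every $\sim$-class. These overlap checks are exactly what you postpone as bookkeeping, and they are not automatic. For $\Pi_\qfr=\{\al_p,\al_q\}$ with $p<q$, the repaired triangle with $j=p$ shares no complementary triple with $\tri{C_0,\qfr}$.

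The paper settles this by induction on $n$:
\begin{itemize}
\item $\tri{\Pi,\qfr}$ overlaps the $B_{n-1}$ subsystem on $\{\al_2,\dots,\al_n\}$ once that set has two white roots.
\item The triangle of $\{\sum_{i=1}^n\al_i,\al_n,\dots,\al_2\}$ supplies the missing roots $e_1+e_k$ and overlaps that subsystem.
\item The case of one white root in $\{\al_2,\dots,\al_n\}$ with $\al_1$ white ($r=4$ or $5$) is handled by a single $A_3$-like triangle meeting every class.
\item All remaining cases have $r\le2$.
\end{itemize}
As written, your proposal is a plan whose decisive steps are missing.
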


\begin{proof}
    We will prove that $\Delta_\qfr^+$ is K\"ahler or Killing, where $\Delta$ is the root system for the Lie algebra $\ggo = \mathfrak{so}(2n+1)$ (of $B_n$ type). We proceed by induction on $n \geq 2$.
    
    For $n = 2$, there is some basis $\Pi = \{\al, \beta \} \seq \Delta$ such that $\Delta^+ = \{ \al, \beta, \al+\beta, \al+2\beta \}$. Define $C := \{ \beta, \al, \beta \}$, which is an $A_3$-like set, and observe that $\tri{C, \qfr}= \Delta_\qfr^+$. Hence by Lemma \ref{MYLEMMA}, $\Delta_\qfr^+$ is either a K\"ahler set or a Killing set.
    
    Suppose that we have proven that $\Delta_\qfr^+$ is either K\"ahler or Killing, for any choice of complementary roots on $\Delta$ the root system of $B_{n-1}$ type for some $n \geq 3$. Consider a basis $\Pi = \{ \al_1, \dots, \al_n \} \seq \Delta^+$ for $\Delta$ as in Figure \ref{FigureBn}, where $\Delta$ is of $B_n$ type. Observe that $C_1 := \Pi$ is $A_n$-like and that $\Pi' := \{ \al_2, \dots, \al_n \}$ is a basis for a root system $\Delta' \seq \Delta$ of $B_{n-1}$ type. Set $(\Delta')^+ := \Delta' \cap \Delta^+$ and $(\Delta')_\qfr^+ := \Delta' \cap \Delta_\qfr^+$. By the induction hypothesis and Lemma \ref{MYLEMMA}, we know that $(\Delta')_\qfr^+$ and $\tri{C_1, \qfr}$ are either K\"ahler or Killing.

	Suppose that $\Pi' \cap \Delta_\qfr$ has at least two elements. Then $(\Delta')_{\qfr}^+$ and $\tri{C_1, \qfr}$ are overlapping, and by Lemma \ref{MyLemmaOverlap}, both are K\"ahler or both are Killing. Let $\beta := \sum\limits_{i = 1}^{n} \al_i \in \Delta_\qfr^+$ be the maximal root in $\tri{C_1}$. Then $\tri{C_1} \cup (\Delta')^+ = \Delta^+ - \{ \beta+ \sum\limits_{s = i}^{n}\al_s  \colon  2 \leq i \leq n\}$. Define $C_2 := \{ \beta, \al_n, \dots, \al_2 \}$, which is $A_n$-like, such that $\tri{C_2, \qfr}$ overlaps with $(\Delta')_{\qfr}^+$. We conclude that $\Delta_\qfr^+ = \tri{C_1, \qfr} \cup (\Delta')_\qfr^+ \cup \tri{C_2, \qfr}$ is either K\"ahler or Killing.
	
	If $\Pi' \cap \Delta_\mathfrak{q} = \{ \al_i \}$ for some $2 \leq i \leq n$, and $\al_1$ is a complementary root, then by Theorem \ref{FormulaBn} it holds that $r(G/H) = 4$ if $i = 2$, and $r(G/H) = 5$ otherwise. Define $\beta_i := \sum\limits_{j = i}^{n} \al_j$. Consider $C : = \{ \beta_2, \al_1, \beta_2 \}$ when $i = 2$, and $C := \{ \al_1, \beta_2, \beta_i \}$ otherwise. In both cases, $\tri{C} = \tri{C, \qfr}$ is either K\"ahler or Killing and it contains a representative of each equivalence class. Hence $\Delta_\qfr^+$ is either K\"ahler or Killing. 
	
	If $\Pi' \cap \Delta_\mathfrak{q} = \emptyset$, then the result follows from Theorem \ref{BTP2summands} since $r(G/H) \leq 2$.
\end{proof}

\begin{theorem}\label{FlagCn}
    For $G = Sp(n)$, every $G$-invariant BTP metric on the complex flag manifolds $(G/H, J)$ is either K\"ahler or a multiple of the standard metric.
\end{theorem}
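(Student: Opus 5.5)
The argument follows the pattern of Theorem \ref{FlagBn}: show that $\Delta_\qfr^+$ is either K\"ahler or Killing, by induction on $n\geq 2$, gluing overlapping K\"ahler/Killing pieces with Lemma \ref{MyLemmaOverlap}. Since $C_2=B_2$, the base case $n=2$ is already covered by Theorem \ref{FlagBn}. Explicitly, with $\Pi=\{\al_1,\al_2\}$ and $\al_2$ long, $\Delta^+=\{\al_1,\al_2,\al_1+\al_2,2\al_1+\al_2\}$. The ordered set $C=\{\al_1,\al_2,\al_1\}$ is $A_3$-like and $\tri{C,\qfr}=\Delta_\qfr^+$, so Lemma \ref{MYLEMMA} applies.

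For the inductive step, take $\Pi=\{\al_1,\dots,\al_n\}$ as in Figure \ref{FigureCn}. The subset $\Pi'=\{\al_2,\dots,\al_n\}$ spans a subsystem $\Delta'$ of type $C_{n-1}$, so by induction $(\Delta')^+_\qfr$ is K\"ahler or Killing. The base $C_1:=\Pi$ is $A_n$-like, so $\tri{C_1,\qfr}$ is K\"ahler or Killing by Lemma \ref{MYLEMMA}. Together these cover all positive roots except the type (b) roots containing $\al_1$ with coefficient $1$:
\[
\sum_{s=1}^{j-1}\al_s+2\sum_{t=j}^{n-1}\al_t+\al_n,\qquad 2\le j\le n-1,
\]
and the long root $2\sum_{s=1}^{n-1}\al_s+\al_n$.

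To cover the roots with $\al_1$-coefficient $1$, I will use a second $A_n$-like base
\[
C_2:=\{\beta,\al_{n-1},\al_{n-2},\dots,\al_2\},\qquad \beta:=\sum_{s=1}^{n}\al_s .
\]
Its consecutive partial sums are $\beta+\al_{n-1}+\dots+\al_j$, which are exactly these roots, and the sums omitting $\beta$ are $A$-type roots of $\Delta'$. I still need to check that $C_2$ is $A_n$-like, i.e.\ that no non-consecutive sum is a root. This should follow from the explicit root list, because coefficients $2$ can only occur on a terminal segment ending before $\al_n$.

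For the root $2\sum_{s=1}^{n-1}\al_s+\al_n$, write $\delta:=\sum_{s=1}^{n-1}\al_s$. The ordered set $\{\delta,\al_n,\delta\}$ is $A_3$-like, since $2\delta$ is not a root and $\delta+\al_n$, $2\delta+\al_n$ are roots. Its triangle contains the long root and overlaps $\tri{C_1,\qfr}$ as soon as $\delta,\al_n,\delta+\al_n\in\Delta_\qfr$. Alternatively, the long root can be reached as a $\beta$-series inside Lemma \ref{MyLemmaSeries}.

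The main obstacle is overlap. Lemma \ref{MyLemmaOverlap} needs a triple $\al,\beta,\al+\beta$ of complementary roots in the intersection, and this fails when few simple roots are complementary. I will split as in Theorem \ref{FlagBn}. If $|\Pi'\cap\Delta_\qfr|\ge2$, all the pieces overlap with $(\Delta')^+_\qfr$ or with $\tri{C_1,\qfr}$, and the result follows. If $\Pi'\cap\Delta_\qfr=\emptyset$, then $b\le1$ and $r\le2$ by Theorem \ref{FormulaCn}, so Theorem \ref{BTP2summands} applies. The delicate case is $\Pi'\cap\Delta_\qfr=\{\al_i\}$ with $\al_1\in\Pi_\qfr$. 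Here $b=2$ and Theorem \ref{FormulaCn} gives $r=4$ if $i=n$ and $r=6$ otherwise. In this case I will build a single $A_3$- or $A_4$-like set whose triangle meets every isotropy class; for instance, with $\gamma_i:=\sum_{s=2}^{i}\al_s$, try $\{\al_1,\gamma_i,\ast,\gamma_i\}$-type chains mimicking the long-root structure. Alternatively, I will combine one such triangle with the configuration of Lemma \ref{MyLemmaForCn}, where $\gamma-\beta\in\Delta_\hfr$, which is designed for exactly these $C_n$ configurations. Since the metric is constant on classes, matching representatives of all classes suffices. Verifying that such a set exists in each subcase ($i=n$ versus $i<n$) is where I expect the real work to lie.
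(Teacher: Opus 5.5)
Your proposal has a genuine gap: the overlap bookkeeping is keyed to the wrong condition, and the hardest case is left as a plan.

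\textbf{The overlap claim fails.} Your $C_2=\{\beta,\al_{n-1},\dots,\al_2\}$ has $n-1$ elements, so it is $A_{n-1}$-like. Its triangle meets $\tri{C_1}\cup\Delta'$ only in $\beta$ and in the sums $\al_k+\dots+\al_j$ with $2\le k\le j\le n-1$. Hence $\tri{C_2,\qfr}$ overlaps another piece only when at least two of $\al_2,\dots,\al_{n-1}$ are complementary. Having $|\Pi'\cap\Delta_\qfr|\ge 2$ is not enough. Take $\Pi_\qfr=\{\al_1,\al_i,\al_n\}$ with $2\le i\le n-1$. Then $|\Pi'\cap\Delta_\qfr|=2$, yet the class of $\sum_{s=1}^{j-1}\al_s+2\sum_{t=j}^{n-1}\al_t+\al_n$ ($2\le j\le i$; coefficients $1,2,1$ on $\al_1,\al_i,\al_n$) lies entirely inside this isolated triangle. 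Lemma \ref{MyLemmaOverlap} therefore never ties its value to the rest.

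\textbf{The long root is not covered in general.} The root $2\sum_{s=1}^{n-1}\al_s+\al_n$ is the only root with $\al_1$-coefficient $2$. When $\al_1\in\Pi_\qfr$ and $\al_n\in\Pi_\hfr$, your triangle of $\{\delta,\al_n,\delta\}$ reduces to $\{\delta,\delta+\al_n,2\delta+\al_n\}$. This meets $\tri{C_1,\qfr}$ only in $\{\delta,\delta+\al_n\}$, and the $\delta$-series through $\delta+\al_n$ returns the same three roots. So there is no overlap here either.

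\textbf{The case $\Pi_\qfr=\{\al_1,\al_i\}$ is not done.} For $i=n$ it is immediate: your $\{\delta,\al_n,\delta\}$ already meets all four classes. For $i<n$ nothing is verified. For $i=2$, a single $A$-like set cannot work. The classes of $\al_1$ and of $2\sum_{t=2}^{n-1}\al_t+\al_n$ are singletons. This forces $\al_1$ to occur twice in the base, with the other complementary elements between the two copies. Then one of two things happens:
\begin{enumerate}[font = \normalfont, label = (\roman*)]
\item the middle is $2\sum_{t=2}^{n-1}\al_t+\al_n$ itself, and the class of $\al_2$ is missed; or
\item the first copy of $\al_1$ plus the non-adjacent element in the class of $\al_2$ is again a root, so the set is not $A$-like.
\end{enumerate}

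\textbf{What the paper does instead.} It avoids all of this with a preliminary reduction. Group $\al_1,\dots,\al_{n-1}$ into consecutive blocks, each containing exactly one complementary simple root, replace each block by its sum, and keep $\al_n$. This gives a subsystem of type $C_b$ or $C_{b+1}$ whose positive roots represent every isotropy class. So one may assume $\al_1,\dots,\al_{n-1}$ are all complementary.

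The paper then uses $C_2=\{\beta,\al_{n-1},\dots,\al_2,\al_1\}$, which is your chain with $\al_1$ appended. Its triangle contains the long root and overlaps $\tri{C_1,\qfr}$ through $\al_1,\al_2,\al_1+\al_2$, whether or not $\al_n$ is complementary.

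Your delicate case then becomes the base case of type $C_3$ with $\Pi_\qfr=\{\al,\beta\}$ and $\gamma\in\Pi_\hfr$. The paper settles it by applying Lemma \ref{MyLemmaForCn} to $\al,\beta,\beta+\gamma$, which is allowed since $(\beta+\gamma)-\beta=\gamma\in\Delta_\hfr$. It then overlaps the result with the triangle of $\{\al+\beta+\gamma,\beta,\al\}$. This non-$A$-like configuration is the ingredient your plan is missing.
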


\begin{proof}
    Let $\Delta_\qfr^+$ be the set of positive complementary roots (with respect to the integrable complex structure $J$) and $\Pi_\qfr := \{ \al_{i_1}, \dots, \al_{i_b}\}$ the simple complementary roots, with $1 \leq i_1 < \cdots < i_b \leq n$ (see Figure \ref{FigureCn}).
    Define $$\beta_1 := \sum\limits_{s = 1}^{i_1}\al_s, \hspace{2mm} \beta_j := \sum\limits_{s = i_{j-1}+1}^{i_j}\al_s, \hspace{1mm} \text{for } 2\leq j \leq b-2.$$
  	Also, define $\beta_{b-1} := \sum\limits_{s = i_{b-2}+1}^{n-1}\al_s$ if $\al_n$ is a complementary root, and $\beta_{b-1}:= \sum\limits_{s = i_{b-2}+1}^{i_{b-1}}\al_s$, $\beta_b := \sum\limits_{s = i_{b-1}+1}^{n-1} \al_s$ if not. Consider
  	$$\Pi' := \begin{cases}
  		\{ \beta_1, \dots, \beta_{b-1}, \al_n \}, &\text{if } \al_n \in \Pi_\qfr, \\
  		\{\beta_1, \dots, \beta_b, \al_n\}, &\text{if } \al_n \in \Pi_\hfr.
  	\end{cases}$$
    In either case, $\Pi'$ is a basis for a root system $\Delta' \seq \Delta$ (of $C_b$ type if $\al_n$ is complementary, and of $C_{b+1}$ type otherwise). Notice that $(\Delta')^+ := \Delta' \cap \Delta^+$ contains a representative of each equivalence class in $\Delta_\qfr^+$ under $\sim$. Then without loss of generality, we may assume that $\al_1, \dots, \al_{n-1}$ are all complementary roots.
        
    We will proceed as in Theorem \ref{FlagBn} and prove the result by induction on $n \geq 3$. For $n = 3$, let $\Pi = \{ \al, \beta, \gamma \}$ be the usual ordered basis for $\Delta$. If $\gamma \in \Pi_\qfr$, i.e., $\Delta = \Delta_\qfr$, then define
   	$C_1 := \Pi$ and $\Pi' := \{\beta, \gamma\}$. Observe that $C_1$ is an $A_3$-like set and that $\Pi'$ is a basis for a root system $\Delta' \seq \Delta$ of $B_2$ type. Additionally, $\tri{C_1}$ and $(\Delta')^+ := \Delta' \cap \Delta^+$ are overlapping. Hence by Theorem \ref{FlagBn}, Lemma \ref{MYLEMMA} and Lemma \ref{MyLemmaOverlap}, we deduce that both are K\"ahler or both are Killing. The positive roots not contained in $\tri{C_1} \cup (\Delta')^+$ are $\al+2\beta+\gamma$ and $2\al+2\beta+\gamma$. Set $C_2 := \{ \al+\beta+\gamma, \beta, \al  \}$, which is $A_3$-like. Since $\tri{C_1}$ and $\tri{C_2}$ are overlapping, we conclude $\Delta_\qfr^+ = \Delta^+$ is either K\"ahler or Killing.
   	
    If $\gamma \in \Pi_\hfr$, then consider $C_1 := \{ \al, \beta, \beta+\gamma \}$. By applying Lemma \ref{MyLemmaForCn}, we obtain that $\tri{1} := \{ \al, \beta, \beta+\gamma, \al+\beta, 2\beta+\gamma,\al+2\beta+\gamma \}$ is either K\"ahler or Killing. Set $C_2 := \{ \al+\beta+\gamma, \beta, \al \}$ and notice that $\tri{1}, \tri{C_2}$ are overlapping. Hence $\Delta_\qfr^+$ is either K\"ahler or Killing.
    
    Suppose we have proven $\Delta_\qfr^+$ is either K\"ahler or Killing, for any choice of complementary roots on $\Delta$ the root system of $C_{n-1}$ type for some $n \geq 4$. Let $\Pi = \{ \al_1, \al_2, \dots, \al_n \}$ be a basis for the root system of $C_n$ type. Define $C_1 := \Pi$ and $\Pi' := \{ \al_2, \dots, \al_n\}$. Then $C_1$ is $A_n$-like and $\Pi'$ is a basis for a root system $\Delta' \seq \Delta$ of $C_{n-1}$ type. Hence by our induction hypothesis and Lemma \ref{MYLEMMA}, we know that $\tri{C_1, \qfr}$ and $(\Delta')_\qfr^+ := \Delta' \cap \Delta_\qfr^+$ are either K\"ahler or Killing. Also, since $n \geq 4$, it holds that $\tri{C_1, \qfr}$ and $(\Delta')_\qfr^+$ are overlapping. By Lemma \ref{MyLemmaOverlap}, both are K\"ahler or both are Killing.
    
    Observe that $$\tri{C_1} \cup (\Delta')^+ = \Delta^+ - \left\{ \beta + \sum\limits_{s = k}^{n-1}\al_s \colon 1\leq k \leq n-1 \right\},$$
    where $\beta := \sum\limits_{s = 1}^{n}\al_s \in \Delta_\qfr^+$. Set $C_2 := \{ \beta, \al_{n-1}, \al_{n-2}, \dots, \al_1\}$. Then $C_2$ is $A_n$-like, $\tri{C_2, \qfr}$ is either K\"ahler or Killing, and it overlaps with $\tri{C_1, \qfr}$. We conclude that $\Delta_\qfr^+ = \tri{C_1, \qfr} \cup (\Delta')_\qfr^+ \cup \tri{C_2, \qfr}$ is either K\"ahler or Killing.
\end{proof}   

\begin{theorem}\label{FlagDn}
    For $G = SO(2n)$, every $G$-invariant BTP metric on the complex flag manifolds $(G/H, J)$ is either K\"ahler or a multiple of the standard metric.
\end{theorem}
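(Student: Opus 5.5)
We will show that $\Delta_\qfr^+$ is either K\"ahler or Killing, where $\Delta$ is the root system of $\sog(2n)$ (of $D_n$ type) with basis $\Pi = \{\al_1,\dots,\al_n\}$ as in Figure \ref{FigureDn}. The argument is an induction on $n$, following the pattern of Theorems \ref{FlagBn} and \ref{FlagCn}. The base case is $n = 3$, where $D_3 = A_3$ and the result follows from Theorem \ref{FlagAn}. It helps to note that the base case can also be seen directly: $\{\al_{n-1}, \al_{n-2}, \al_n\}$ is $A_3$-like, since $\al_{n-1}+\al_n \notin \Delta$. Small-$r$ cases are handled by Theorem \ref{BTP2summands}.

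For the inductive step with $n \geq 4$, we use two subsystems.
\begin{enumerate}[font = \normalfont, label = (\roman*)]
\item $C_1 := \{\al_1, \dots, \al_{n-1}\}$ is $A_{n-1}$-like, and $C_1' := \{\al_1,\dots,\al_{n-2},\al_n\}$ is $A_{n-1}$-like as well.
\item $\Pi' := \{\al_2,\dots,\al_n\}$ is a basis of a subsystem $\Delta'$ of $D_{n-1}$ type.
\end{enumerate}
By Lemma \ref{MYLEMMA}, $\tri{C_1,\qfr}$ and $\tri{C_1',\qfr}$ are each K\"ahler or Killing. By the induction hypothesis, $(\Delta')^+_\qfr$ is K\"ahler or Killing. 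Whenever these sets overlap, Lemma \ref{MyLemmaOverlap} forces them all to be of the same type.

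The positive roots missing from $\tri{C_1}\cup\tri{C_1'}\cup(\Delta')^+$ are those involving $\al_1$ with support reaching both $\al_{n-1}$ and $\al_n$. These are $\beta + \al_{n-1}$ with $\beta := \al_1+\dots+\al_{n-2}+\al_n$, together with the roots of type (d) that have $i=1$. We cover them with one more $A$-like chain. Take $C_2 := \{\beta, \al_{n-1}, \al_{n-2}, \dots, \al_2\}$. This should be $A_{n-1}$-like: its consecutive partial sums are $\sum_{s=1}^n \al_s$ and the roots $\al_1 + 2\sum_{t=j}^{n-2}\al_t + \al_{n-1}+\al_n$, which are exactly the missing roots. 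Non-consecutive sums fail to be roots because they produce coefficient patterns not on the list in the proof of Theorem \ref{FormulaDn}. Then $\tri{C_2,\qfr}$ overlaps with $\tri{C_1',\qfr}$ or with $(\Delta')^+_\qfr$, and the union of all the sets is $\Delta^+_\qfr$.

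Overlap can fail when few simple roots are complementary, and this needs a reduction first. As in Theorem \ref{FlagCn}, we collapse consecutive runs of black nodes into the complementary root ending the run. This uses the $\beta_j$ constructed in Lemma \ref{MYLEMMA} and the representative count from Theorem \ref{FormulaDn}, and it replaces $\Delta$ by a smaller subsystem containing a representative of each class. The subsystem is of $D$ type, or of type $A$ or $B$ if the fork is partly black; there we invoke Theorems \ref{FlagAn} and \ref{FlagBn} or Lemma \ref{MyLemmaForCn}. After the reduction we may essentially assume all of $\al_1,\dots,\al_{n-2}$ are complementary, so the overlaps above hold.

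The remaining degenerate configurations have $b\le 2$ or only one fork node white. There we check directly, with a single $A_3$- or $A_4$-like chain, that some K\"ahler-or-Killing set meets every equivalence class, as done at the end of Theorem \ref{FlagBn}. If $r \le 2$, Theorem \ref{BTP2summands} applies.

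The main obstacle is this bookkeeping at the fork. One must verify that $C_2$ is genuinely $A$-like. One must also make sure that when $\al_{n-1}$ or $\al_n$ lies in $\Pi_\hfr$, the equality $x_{\gamma}=x_{\gamma'}$ for $\gamma-\gamma'\in\Delta_\hfr$ still lets Lemma \ref{MyLemmaForCn} bridge the classes. This is the analogue of the $\gamma\in\Pi_\hfr$ case in Theorem \ref{FlagCn}. We would first try to handle each painting of $\{\al_{n-2},\al_{n-1},\al_n\}$ separately, guided by the three cases of Theorem \ref{FormulaDn}.
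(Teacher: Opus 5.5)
Your generic step is the paper's: once $\tri{C_1,\qfr}$, $\tri{C_1',\qfr}$ and $(\Delta')^+_\qfr$ overlap, adding the chain $C_2=\{\beta,\al_{n-1},\al_{n-2},\dots,\al_2\}$ finishes the argument, and $C_2$ is indeed $A_{n-1}$-like. The gap is in how you reach that situation.

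\textbf{The collapsing reduction fails in type $D$.} The reduction borrowed from Theorem \ref{FlagCn} does not keep a representative of every class here. In type $C$ it works because $2e_a$ is a root. In $D_n$, a root $e_a+e_b$ with $a<b$ in the same collapsed block has no counterpart in the collapsed subsystem. These roots are exactly what the term $c$ in Theorem \ref{FormulaDn} counts. They also include the class of $\al_{n-2}+\al_{n-1}+\al_n$ when $\al_{n-2}$ is black and both fork nodes are white. So the count you propose to use would expose the failure. Two examples:
\begin{enumerate}[font = \normalfont, label = (\roman*)]
\item For $\Pi_\qfr=\{\al_1,\al_{n-1},\al_n\}$ one has $r=7$. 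Collapsing $\al_2,\dots,\al_{n-2}$ gives the $D_3\cong A_3$ subsystem with basis $\{\al_1,\sum_{s=2}^{n-1}\al_s,\sum_{s=2}^{n-2}\al_s+\al_n\}$. It has only six positive roots and no representative of the class of $\al_{n-2}+\al_{n-1}+\al_n=e_{n-2}+e_{n-1}$.
\item For $D_4$ with $\Pi_\qfr=\{\al_2,\al_3,\al_4\}$, the collapsed system $\{\al_1+\al_2,\al_3,\al_4\}$ misses the class of $\al_1+2\al_2+\al_3+\al_4$.
\end{enumerate}

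\textbf{Your fallback tools do not exist in type $D$.} $D_n$ is simply laced, so it has no subsystem of type $B$, and Theorem \ref{FlagBn} cannot be invoked. Lemma \ref{MyLemmaForCn} is vacuous there. If $\al+\beta$, $\beta+\gamma$ and $\al+\gamma$ are roots and all roots have squared length $2$, then all pairwise inner products equal $-1$. Hence $|\al+\beta+\gamma|^2=0$, so $\al+\beta+\gamma$ cannot be a root.

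\textbf{An uncovered case.} Consequently $\Pi_\qfr=\{\al_1,\al_{n-1},\al_n\}$ is left unproved for every $n\ge 4$. It has $b=3$ and two white fork nodes, and no two of your three basic sets overlap. So it is not among your ``degenerate configurations''. Since your base case is $D_3$, the $D_4$ instance of this painting is never treated directly either.

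\textbf{How the paper avoids this.} The paper never collapses. It splits on whether at least two of $C_1\cap C_1'$, $C_1\cap\Pi'$, $C_1'\cap\Pi'$ contain two complementary roots; this yields the overlaps for arbitrary paintings. Otherwise either $\Pi_\qfr\subseteq\{\al_1,\al_{n-1},\al_n\}$, or $\Pi_\qfr$ is $\{\al_i\}$ or $\{\al_i,\al_j\}$ with $2\le i\le n-2$ and $j\in\{1,n-1,n\}$.
\begin{enumerate}[font = \normalfont, label = (\roman*)]
\item The case $\Pi_\qfr\subseteq\{\al_1,\al_{n-1},\al_n\}$ is reduced to the $D_4$ subsystem with basis $\{\al_1,\sum_{s=2}^{n-2}\al_s,\al_{n-1},\al_n\}$. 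This basis keeps a black node, so the extra class survives.
\item That configuration is then settled in an explicit $D_4$ base case. For $\Pi_\qfr=\{\al_1,\al_3,\al_4\}$ it uses the $A_3$-like chains $\{\al_1,\al_2+\al_3,\al_4\}$ and $\{\al_1,\al_2+\al_4,\al_3\}$, whose triangles overlap and together meet all seven classes.
\item The remaining cases are handled with explicit $A_3$-like chains, or with Theorem \ref{BTP2summands} when $\Pi_\qfr=\{\al_i\}$.
\end{enumerate}
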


\begin{proof}
    We will prove that $\Delta_\qfr^+$ is either K\"ahler or Killing, where $\Delta$ is the root system for the Lie algebra $\ggo = \mathfrak{so}(2n)$ (of $D_n$ type). We proceed by induction on $n \geq 4$.

	For $n = 4$, take $\Pi := \{ \al, \beta, \gamma, \delta\} \seq \Delta^+$ the usual ordered basis for $\Delta$ as in Figure \ref{FigureD4} and let $\Pi_\qfr \seq \Pi$ be the simple complementary roots. We may assume that $|\Pi_\qfr | \geq 2$; otherwise, the proof follows from Theorem \ref{BTP2summands} since $r(G/H) \leq 2$.
	
	If $\Pi_\qfr = \Pi$, then take $C_1 := \{ \al, \beta, \gamma \}$, $C_2 := \{ \al, \beta, \delta\}$ and $C_3 := \{ \gamma, \beta, \delta\}$. Each set is $A_3$-like, and the triangles $\tri{C_1}$, $\tri{C_2}$, $\tri{C_3}$ are pairwise overlapping. The leftout roots are $\al+\beta+\gamma+\delta$ and $\al+2\beta+\gamma+\delta$. We define $C_4 := \{ \al+\beta+ \gamma, \delta, \beta\}$, which is $A_3$-like, and whose triangle $\tri{C_4}$ overlaps with $\tri{C_2}$. Hence $\Delta_\qfr^+ = \tri{C_1} \cup \tri{C_2} \cup \tri{C_3} \cup \tri{C_4}$ is either K\"ahler or Killing.
	
	For $|\Pi_\qfr| = 3$, it is enough to consider the cases when $\Pi_\qfr = \{ \al, \beta, \gamma \}$ and $\Pi_\qfr = \{ \al, \gamma, \delta\}$ (every other choice of complementary roots produces equivalent painted Dynkin diagrams). If $\Pi_\qfr = \{ \al, \beta, \gamma \}$, then consider the sets $C_1 := \Pi_\qfr$ and $C_2 := \{\beta, \al, \beta+\gamma \}$, which are both $A_3$-like. Their triangles are overlapping and their union contains a representative of each equivalence class in $\Delta_\qfr^+$. We conclude that $\Delta_\qfr^+$ is either K\"ahler or Killing.
	
	If $\Pi = \{ \al, \gamma, \delta\}$, then consider $C_1 := \{ \al, \beta+\gamma, \delta\}$ and $C_2 := \{\al, \beta+\delta, \gamma\}$ (note that $\beta+ \gamma \sim \gamma$ and $\beta+\delta \sim \delta$). The triangles $\tri{C_1, \qfr}$ and $\tri{C_2, \qfr}$ are both K\"ahler or both Killing, and their union contain representatives of each equivalence class. Hence $\Delta_\qfr^+$ is either K\"ahler or Killing.
	
	For $|\Pi_\qfr| = 2$, it is enough to consider the cases when $\Pi_\qfr = \{ \al, \gamma \}$ and $\Pi_\qfr = \{\al, \beta \}$. The result follows by considering $C_1 := \{\al, \beta+\gamma \}$ if $\Pi_\qfr = \{ \al, \gamma \}$, and $C_2 := \{\beta, \al, \beta+\gamma+\delta \}$ if $\Pi_\qfr = \{\al, \beta \}$. In each scenario, $C_i$ is an $A_2$-like or an $A_3$-like set, whose triangle contains a representative of each equivalence class. We conclude that $\Delta_\qfr^+$ is either K\"ahler or Killing.

	\begin{figure}[h]
		\centering
	\begin{tikzpicture}[scale=1, baseline={(0,-0.1)}]
		\tikzstyle{every node}=[circle, draw, inner sep=1.5pt];
		
		\node (a1) at (-1,0) {};
		\node (a2) at (0,0) {};
		\node (a3) at (0.5,0.866) {};
		\node (a4) at (0.5,-0.866) {};
		
		\node[draw = none] at ($(a1.south)+(0,-8pt)$) {$\al$};
		\node[draw = none] at ($(a2.south)+(8pt, 0)$) {$\beta$};
		\node[below=2pt of dots, draw=none] {};
		\node[draw = none] at ($(a3.south)+(8pt,0)$) {$\gamma$};
		\node[draw = none] at ($(a4.south)+(8pt,0)$) {$\delta$};
		
		\draw (a1)--(a2) (a2)--(a3) (a2)--(a4);
	\end{tikzpicture}
\caption{Dynkin diagram of $D_4$ type}\label{FigureD4}
\end{figure}
Suppose we have proven $\Delta_\qfr^+$ is either K\"ahler or Killing, for any choice of complementary roots on $\Delta$ the root system of $D_{n-1}$ type for some $n \geq 5$. Let $\Pi = \{ \al_1, \al_2, \dots, \al_n \}$ be a basis for the root system of $D_n$ type as in Figure \ref{FigureDn}. Consider $C_1 := \{ \al_1, \al_2, \dots, \al_{n-1}\}$, $C_2 := \{ \al_1, \al_2, \dots, \al_{n-2}, \al_n\}$ and $\Pi' := \{ \al_2, \al_3, \dots, \al_{n-1}, \al_n\}$, and observe that $C_1$, $C_2$ are $A_{n-1}$-like sets and $\Pi'$ is a basis for a root system $\Delta'\seq \Delta$ of $D_{n-1}$ type. Note that $\tri{C_1, \qfr}$, $\tri{C_2, \qfr}$ and $(\Delta')_\qfr^+ := \Delta' \cap \Delta_\qfr^+$ are either K\"ahler sets or Killing sets.

If at least two of the three intersections $C_1 \cap C_2$, $C_1\cap \Pi'$, $C_2 \cap \Pi'$ each contain at least two complementary roots, then $\tri{C_1, \qfr}$, $\tri{C_2, \qfr}$,  $(\Delta')_\qfr^+$ are all K\"ahler or all Killing (since we can find at least two overlapping pairs). Observe that 
$$\tri{C_1} \cup \tri{C_2} \cup (\Delta')^+ = \Delta^+ - \left\{ \beta + \sum\limits_{s = i}^{n-1}\al_s \colon 2 \leq i \leq n-1 \right\},$$
where $\beta := \sum\limits_{k = 1}^{n-2} \al_k + \al_n \in \tri{C_2}$. Define $C_3 := \{ \beta, \al_{n-1}, \dots, \al_2\}$ if $|C_1 \cap \Pi'\cap \Pi_\qfr| \geq 2$, and $C_3 := \{ \gamma, \al_n, \al_{n-2}, \dots, \al_2\}$ if $|C_2 \cap \Pi' \cap \Pi_\qfr| \geq 2$, where $\gamma := \sum\limits_{k = 1}^{n-1} \al_k \in \tri{C_1}$. In either case, $\tri{C_3, \qfr}$ overlaps with $(\Delta')_\qfr^+$. Hence $\Delta_\qfr^+ = \tri{C_1, \qfr} \cup \tri{C_2, \qfr} \cup (\Delta')_\qfr^+ \cup \tri{C_3, \qfr}$ is either K\"ahler or Killing.

If at most one of the three intersections $C_1 \cap C_2$, $C_1 \cap \Pi'$, $C_2 \cap \Pi'$ contains more than one complementary root, then we either have that $\Pi_\qfr \seq \{ \al_1, \al_{n-1}, \al_n \}$ or $\Pi_\qfr = \{ \al_1, \al_i \}$, $\{ \al_{n-1}, \al_i \}$, $\{ \al_n, \al_i \}$, $\{ \al_i\}$ for some $2 \leq i \leq n-2$. For the first scenario, consider $\beta := \sum\limits_{s =2}^{n-2}\al_s$ and $\Pi'' := \{ \al_1, \beta, \al_{n-1}, \al_n\}$. Then $\Pi''$ is a basis for a root system $\Delta'' \seq \Delta$ of $D_4$ type. Note that $(\Delta'')_\qfr^+:= \Delta'' \cap \Delta_\qfr^+$ contains a representative of each equivalence class of $\Delta_\qfr^+$ under $\sim$. It follows that $(\Delta'')_\qfr^+$, and therefore $\Delta_\qfr^+$, is either a K\"ahler set or a Killing set.

For $\Pi_\qfr = \{\al_1, \al_i \}$, consider the set $C := \left\{\sum\limits_{s = 2}^{n-1}\al_s, \al_1, \sum\limits_{s = 2}^{n-2}\al_s + \al_n\right\}$ if $i = 2$, and $C := \left\{ \al_1, \sum\limits_{s = 2}^{n-2}\al_s, \sum\limits_{s = i}^{n}\al_s\right\}$ if $i > 2$. In either case, $C$ is a $A_3$-like and $\tri{C, \qfr}$ contains a representative of each equivalence class in $\Delta_\qfr^+$. Hence $\Delta_\qfr^+$ is either a K\"ahler set or a Killing set.

For $\Pi_\qfr = \{ \al_i, \al_{n-1}\}$, consider the $A_3$-like set $C := \left\{ \sum\limits_{s = i}^{n-2}\al_s, \al_{n-1}, \sum\limits_{s = i}^{n-2}\al_s +\al_n \right\}$. Its triangle $\tri{C, \qfr}$ contains a representative of each equivalence class in $\Delta_\qfr^+$. Therefore $\Delta_\qfr^+$ is either K\"ahler or Killing. The case $\Pi_\qfr = \{\al_i, \al_n \}$ follows in much the same way.

The proof when $\Pi_\qfr = \{\al_i \}$ follows from Theorem $\ref{BTP2summands}$ since $r(G/H) = 2$.

\end{proof}

\begin{theorem}\label{FlagEn}
	For $n = 6,7$ and $8$, every $E_n$-invariant BTP metric on the complex full flag manifolds $(E_n/T^n, J)$ is either K\"ahler or a multiple of the standard metric.
\end{theorem}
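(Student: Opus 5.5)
For the full flag $E_n/T^n$ we have $\Delta_\qfr = \Delta$, and every root is its own equivalence class. The aim is therefore to show that $\Delta^+$ is a K\"ahler set or a Killing set. I would follow the same overlapping-triangles strategy as in Theorems \ref{FlagBn}, \ref{FlagCn} and \ref{FlagDn}, inducting along the chain $D_5 \subset E_6 \subset E_7 \subset E_8$ of subdiagrams of the Dynkin diagram. Take Bourbaki's labelling: $\al_2$ is attached to $\al_4$, and the long chain is $\al_1 - \al_3 - \al_4 - \al_5 - \cdots - \al_n$. The sub-basis $\{\al_1,\al_2,\al_3,\al_4,\al_5\}$ is of $D_5$ type, and $\{\al_1,\dots,\al_{n-1}\}$ is of $E_{n-1}$ type for $n=7,8$. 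Since every positive root of a root subsystem $\Delta'$ spanned by a sub-basis is complementary, Theorem \ref{FlagDn} (applied to the full flag of $D_5$) and the induction hypothesis show that $(\Delta')^+$ is K\"ahler or Killing.

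The inductive step for $E_n$ goes as follows. Let $\Delta'$ be the subsystem with basis $\{\al_1,\dots,\al_{n-1}\}$, and let $C_1 := \{\al_2,\al_4,\al_5,\dots,\al_n\}$, which is $A_{n-2}$-like. By Lemma \ref{MYLEMMA}, $\tri{C_1}$ is K\"ahler or Killing. It overlaps $(\Delta')^+$ in, for example, $\al_4,\al_5,\al_4+\al_5$. By Lemma \ref{MyLemmaOverlap}, both sets are therefore of the same type. It then remains to cover the roots with positive $\al_n$-coefficient that lie in neither set. These roots are exactly the weights of the representation of $E_{n-1}$ on the graded pieces $\ggo_k$ with $k\geq1$, where $k$ is the $\al_n$-coefficient: the $27$ for $E_6\subset E_7$, and the $56$ together with the $1$-dimensional piece for $E_7\subset E_8$. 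The same description applies to the $16$ for $D_5\subset E_6$.

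The plan for these leftover roots is to cover them by a finite family of $A_m$-like chains. Start from the highest root $\theta$ of each grade and walk down by simple roots: from a root $\gamma$ and a simple root $\al_i$ with $\gamma-\al_i\in\Delta$, form chains of the form $\{\gamma-\al_{i_1}-\cdots,\ \al_{i_1},\dots\}$, or use the pattern from the $B_n$ and $D_n$ proofs, $C_3 := \{\beta,\al_{n-1},\dots\}$, with $\beta$ a top root of an already covered triangle. Each new triangle must overlap an already controlled set. Lemma \ref{MyLemmaSeries} supplies further overlaps along $\al_i$-strings, and in $E_n$ all these strings have length at most $2$, so every such string is itself an $A_2$-like triangle. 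A cleaner way to organize this is to show that the graph on positive roots is connected, where two roots are joined when they lie in a common triple $\{\al,\beta,\al+\beta\}$ that is contained in some controlled triangle. Every positive root $\gamma$ of height $\geq2$ can be written as $\gamma=(\gamma-\al_i)+\al_i$, and then $\{\al_i,\gamma-\al_i,\gamma\}$ is an $A_2$-like triangle. Any two such triangles sharing a root $\gamma-\al_i$ which is itself non-simple already overlap with the triangle one step below, through a common triple.

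The main obstacle is exactly this overlap bookkeeping. Two $A_2$-triangles that share only a single root do not overlap in the sense of the definition, which requires a full triple $\al,\beta,\al+\beta$. So I must verify that the descent from $\gamma$ to height $2$ can always be realized by $A_3$-like chains $\{\al_i,\al_j,\gamma-\al_i-\al_j\}$ or by Lemma \ref{MyLemmaForCn}-type configurations. These share two-root sums with the previous layer. For simply-laced $E_n$, the roots $\gamma-\al_i$ and $\gamma-\al_j$, for distinct $i,j$ with both differences roots, satisfy that $\gamma-\al_i-\al_j$ is a root whenever $\al_i\perp\al_j$ fails or the heights are large enough. I would check case by case using the explicit lists of the $36$, $63$ and $120$ positive roots, possibly computer-assisted, that every root of height $\geq3$ admits such an $A_3$-like chain overlapping a lower one. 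With that verified, the type of $\Delta^+$ (K\"ahler or Killing) propagates from $\Pi$ upward, which proves the theorem.
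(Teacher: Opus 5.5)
Your set-up is sound, but it is only the easy part, and the decisive step of the proof is missing. $(\Delta')^+$ is controlled by Theorem \ref{FlagDn} and the induction hypothesis, $\tri{C_1}$ by Lemma \ref{MYLEMMA}, and the two overlap in $\{\alpha_4,\alpha_5,\alpha_4+\alpha_5\}$.

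The whole content of the theorem is to cover the remaining roots of positive $\alpha_n$-coefficient by K\"ahler-or-Killing sets chained together by overlaps. There are $12$, $22$ and $51$ of these roots for $E_6,E_7,E_8$. The paper does this by taking two overlapping maximal subsystems: two copies of $D_5$; $E_6$ and $D_6$; $D_7$ and $E_7$. This leaves only the roots with positive coefficients at both removed end nodes. It then lists explicit $A_k$-like sets $C_i$ (three, six and fifteen of them) whose triangles cover these roots and each overlap one of the subsystems.

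You give neither such a list nor a general argument. The step is deferred to an unperformed check, and the heuristics you offer for it are wrong:
\begin{enumerate}
\item Normalize $(\alpha,\alpha)=2$. If $\gamma-\alpha_i$ and $\gamma-\alpha_j$ are both roots with $i\neq j$, then $(\gamma,\alpha_i)=(\gamma,\alpha_j)=1$ and $(\gamma-\alpha_i,\alpha_j)=1-(\alpha_i,\alpha_j)$. For adjacent $\alpha_i,\alpha_j$ this equals $2$, which forces $\gamma=\alpha_i+\alpha_j$. So $\gamma-\alpha_i-\alpha_j$ is a root exactly when $\alpha_i\perp\alpha_j$, the opposite of your claim.
\item Your chain $(\alpha_i,\alpha_j,\gamma-\alpha_i-\alpha_j)$, in that order, requires $\alpha_i+\alpha_j\in\Delta$ and $\gamma-\alpha_j\notin\Delta$. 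So it never applies in the situation you describe.
\item Lemma \ref{MyLemmaForCn} is unavailable, since it needs $\gamma-\beta\in\Delta_\hfr=\emptyset$.
\item Lemma \ref{MyLemmaSeries} only returns single triples here.
\item An $A_2$-triangle is a single triple, so two distinct ones never overlap. Your sentence that such triangles ``already overlap with the triangle one step below'' is false.
\item Connectivity of a graph on roots is the wrong invariant. What must be connected is the graph whose vertices are the triples $\{\alpha,\beta,\alpha+\beta\}$, with two triples adjacent when they lie in a common K\"ahler-or-Killing set.
\end{enumerate}

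Your descent idea can in fact be completed uniformly, with no root lists and no subsystem induction. Let $\gamma$ have height at least $3$, put $\gamma'=\gamma-\alpha_i\in\Delta^+$, and choose $\alpha_j$ with $\delta:=\gamma'-\alpha_j\in\Delta^+$. Then $j\neq i$ (otherwise $(\gamma,\alpha_i)=3$), and $(\gamma,\alpha_j)=1+(\alpha_i,\alpha_j)$.
\begin{enumerate}
\item If $\alpha_i\perp\alpha_j$, then $\gamma-\alpha_j\in\Delta$ and $(\alpha_i,\delta,\alpha_j)$ is $A_3$-like.
\item If $\alpha_i,\alpha_j$ are adjacent, then $\gamma-\alpha_j\notin\Delta$ and $(\delta,\alpha_j,\alpha_i)$ is $A_3$-like.
\end{enumerate}
In both cases the triangle contains the triples $\{\alpha_i,\gamma',\gamma\}$ and $\{\alpha_j,\delta,\gamma'\}$. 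By Lemma \ref{MYLEMMA} these two triples therefore have the same type. The height-two triples are linked by the $A_3$-like sets $(\alpha_a,\alpha_b,\alpha_c)$ for paths $a-b-c$ in the Dynkin diagram. Induction on height then shows that all triples $\{\alpha_i,\gamma-\alpha_i,\gamma\}$ are of one type. This forces $x$ to be either additive on $\Delta^+$ (K\"ahler) or constant. With this lemma, or with explicit lists as in the paper, your argument would be complete.
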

\begin{proof}
	\begin{figure}[h]\label{DynkinE6}
		\centering
		\begin{tikzpicture}[scale = 1, baseline={(0,0)}]
			\tikzstyle{every node}=[circle, draw, inner sep=1.5pt];
			
			\node (a1) at (0,0) {};
			\node (a2) at (1,0) {};
			\node (a3) at (2,0) {};
			\node (a4) at (3,0) {};
			\node (a5) at (4,0) {};
			\node (a6) at (2,1) {};
			
			\node[draw = none] at ($(a1.south)+(0,-8pt)$) {$\alpha_1$};
			\node[draw = none] at ($(a2.south)+(0,-8pt)$) {$\alpha_2$};
			\node[below=2pt of dots, draw=none] {};
			\node[draw = none] at ($(a3.south)+(0,-8pt)$) {$\alpha_3$};
			\node[draw = none] at ($(a4.south)+(0,-8pt)$) {$\alpha_4$};
			\node[draw = none] at ($(a5.south)+(0,-8pt)$) {$\alpha_5$};
			\node[draw = none] at ($(a6.east)+(0,8pt)$) {$\alpha_6$};
			
			\draw (a1)--(a2)--(a3)--(a4)--(a5) (a3)--(a6);
		\end{tikzpicture}
		\caption{Dynkin Diagram of $E_6$ type}
	\end{figure}
	For $n = 6$, let $\Pi := \{ \alpha_1, \dots, \alpha_6\}$ be a basis for $\Delta$ as in Figure \ref{DynkinE6}. Let $\Pi_1 := \Pi - \{ \alpha_5\}$ and $\Pi_2 := \Pi - \{ \alpha_1\}$. Those are basis for root systems $\Delta_1$, $\Delta_2 \subseteq \Delta$ both of $D_5$ type. Additionally $\Delta_1^+ := \Delta_1 \cap \Delta^+$ and $\Delta_2^+ := \Delta_2 \cap \Delta^+$ are overlapping. Therefore, they are either both Kähler or both Killing. Define
	\begin{align*}
		\beta &:= \alpha_1+\alpha_2+\alpha_3+\alpha_4+\alpha_5,& C_1 &:= \{ \alpha_1+\alpha_2+\alpha_3, \alpha_4, \alpha_5 \},\\
		\gamma &:= \alpha_2+2\alpha_3+2\alpha_4+\alpha_5+\alpha_6,& C_2 &:= \{ \beta, \alpha_6, \alpha_3, \alpha_2 \},\\
		& & C_3 &:= \{ \gamma, \alpha_1, \alpha_2, \alpha_3, \alpha_6\}.
	\end{align*}
	The sets $C_1$, $C_2$ and $C_3$ are $A_3$, $A_4$ and $A_5$-like, respectively, and their triangles overlap either with $\Delta_1^+$ or with $\Delta_2^+$. Since it holds that $\Delta^+ = \mathcal{T}_{C_1} \cup \mathcal{T}_{C_2} \cup \mathcal{T}_{C_3} \cup \Delta_1^+ \cup \Delta_2^+$,
	we can conclude that $\Delta^+$ is either Kähler or Killing.
	
	\begin{figure}[htbp]
	\centering
	\begin{tikzpicture}[scale = 1, baseline={(0,0)}]
		\tikzstyle{every node}=[circle, draw, inner sep=1.5pt];
		
		\node (a1) at (0,0) {};
		\node (a2) at (1,0) {};
		\node (a3) at (2,0) {};
		\node (a4) at (3,0) {};
		\node (a5) at (4,0) {};
		\node (a6) at (5,0) {};
		\node (a7) at (3,1) {};
		
		\node[draw = none] at ($(a1.south)+(0,-8pt)$) {$\alpha_1$};
		\node[draw = none] at ($(a2.south)+(0,-8pt)$) {$\alpha_2$};
		\node[below=2pt of dots, draw=none] {};
		\node[draw = none] at ($(a3.south)+(0,-8pt)$) {$\alpha_3$};
		\node[draw = none] at ($(a4.south)+(0,-8pt)$) {$\alpha_4$};
		\node[draw = none] at ($(a5.south)+(0,-8pt)$) {$\alpha_5$};
		\node[draw = none] at ($(a6.south)+(0,-8pt)$) {$\alpha_6$};
		\node[draw = none] at ($(a7.south)+(0,8pt)$) {$\alpha_7$};
		
		\draw (a1)--(a2)--(a3)--(a4)--(a5)--(a6) (a4)--(a7);
	\end{tikzpicture}
	\caption{Dynkin Diagram of $E_7$ type}
	\label{DynkinE7}
	\end{figure}
	
	For $n = 7$, let $\Pi := \{ \alpha_1, \dots, \alpha_7 \}$ be a basis for $\Delta$ as in Figure \ref{DynkinE7}. Let $\Pi_1 := \Pi - \{ \alpha_1 \}$ and \linebreak $\Pi_2 := \Pi - \{ \alpha_6\}$ be basis for root systems $\Delta_1$, $\Delta_2 \subseteq \Delta$ of $E_6$ and $D_6$ type, respectively. Note that the sets $\Delta_1^+ := \Delta_1 \cap \Delta^+$ and $\Delta_2^+ := \Delta_2 \cap \Delta^+$ are overlapping. Therefore they are either both Kähler or both Killing.
	Consider the following roots in $\Delta^+$
	\begin{align*}
		\beta_1 &:= \alpha_3 + \alpha_4+\alpha_5+\alpha_6, & \gamma_1 &:= \alpha_3 + 2\alpha_4+2\alpha_5+\alpha_6+\alpha_7,\\
		\beta_2 &:= \beta_1 + \alpha_1+\alpha_2, & \gamma_2 &:= \gamma_1 + \alpha_1+\alpha_2,\\
		\beta_3 &:= \beta_2 + \alpha_2+\alpha_3+\alpha_4+\alpha_7, & \delta &:= \alpha_2+2\alpha_3+3\alpha_4+2\alpha_5+\alpha_6+2\alpha_7.
	\end{align*}
	and the following subsets of $\Delta^+$
	\begin{align*}
		C_1 &:= \{ \beta_1, \alpha_2, \alpha_1 \}, & C_4 &:= \{ \gamma_1, \alpha_2,\alpha_1 \},\\
		C_2 &:= \{ \beta_2, \alpha_7, \alpha_4, \alpha_3, \alpha_2 \}, &  C_5 &:= \{ \gamma_2, \alpha_3, \alpha_4\},\\
		C_3 &:= \{ \beta_3 , \alpha_5, \alpha_4, \alpha_3 \}, & C_6 &:= \{ \delta, \alpha_1, \alpha_2,\alpha_3,\alpha_4,\alpha_5,\alpha_6 \}.
	\end{align*}
	For $1 \leq i \leq 6$ the triangle $\mathcal{T}_i$ overlaps with either $\Delta_1^+$ or with $\Delta_2^+$. Since it holds that the union of them is $\Delta^+$, we conclude that $\Delta^+$ is either Kähler or Killing.
	
	\begin{figure}[htbp]
	\centering
	\begin{tikzpicture}[scale = 1, baseline={(0,0)}]
		\tikzstyle{every node}=[circle, draw, inner sep=1.5pt];
		
		\node (a1) at (0,0) {};
		\node (a2) at (1,0) {};
		\node (a3) at (2,0) {};
		\node (a4) at (3,0) {};
		\node (a5) at (4,0) {};
		\node (a6) at (5,0) {};
		\node (a7) at (6,0) {};
		\node (a8) at (4,1) {};
		
		\node[draw = none] at ($(a1.south)+(0,-8pt)$) {$\alpha_1$};
		\node[draw = none] at ($(a2.south)+(0,-8pt)$) {$\alpha_2$};
		\node[below=2pt of dots, draw=none] {};
		\node[draw = none] at ($(a3.south)+(0,-8pt)$) {$\alpha_3$};
		\node[draw = none] at ($(a4.south)+(0,-8pt)$) {$\alpha_4$};
		\node[draw = none] at ($(a5.south)+(0,-8pt)$) {$\alpha_5$};
		\node[draw = none] at ($(a6.south)+(0,-8pt)$) {$\alpha_6$};
		\node[draw = none] at ($(a7.south)+(0,-8pt)$) {$\alpha_7$};
		\node[draw = none] at ($(a8.south)+(0,8pt)$) {$\alpha_8$};			
		\draw (a1)--(a2)--(a3)--(a4)--(a5)--(a6)--(a7) (a5)--(a8);
	\end{tikzpicture}
	\caption{Dynkin Diagram of $E_8$ type}
	\label{DynkinE8}
	\end{figure}
	
	For $n = 8$, let $\Pi := \{ \alpha_1, \dots, \alpha_8\}$ be a basis for $\Delta$ as in Figure \ref{DynkinE8}. Then the subsets \linebreak $\Pi_1 := \Pi - \{\alpha_7\}$ and $\Pi_2 := \Pi - \{ \alpha_1 \}$ are basis for root systems $\Delta_1$, $\Delta_2 \subseteq \Delta$ of $D_7$ and $E_7$ type, respectively. It holds that $\Delta_1^+ := \Delta_1 \cap \Delta^+$ and $\Delta_2^+ := \Delta_2 \cap \Delta^+$ are overlapping and that $\Delta_i^+$ is either Kähler or Killing for $i = 1,2$. Therefore they are either both Kähler or both Killing. Consider the following elements of $\Delta^+$
	\begin{align*}
		\beta_1 &:= \alpha_3+\alpha_4+\alpha_5+\alpha_6+\alpha_7, & \delta_1 &:= \alpha_2+2\alpha_3+2\alpha_4+3\alpha_5,\\
		\beta_2 &:= \beta_1 + \alpha_1 + \alpha_2, & \delta_2 &:= \delta_1+\alpha_1+\alpha_2,\\
		\beta_3 &:= \beta_2 + \alpha_2+\alpha_3+\alpha_4+\alpha_5+\alpha_8, & \epsilon_1 &:= \beta_1 + \alpha_3+\alpha_4+\alpha_5+\alpha_8,\\
		\beta_4 &:= \beta_3 + \alpha_3+\alpha_4+\alpha_5+\alpha_6, & \epsilon_2 &:= \beta_4 + \alpha_8,\\
		\gamma_1 &:= \alpha_3+\alpha_4+2\alpha_5+2\alpha_6+\alpha_7+\alpha_8, & \epsilon_3 &:= \epsilon_2+\alpha_4+\alpha_5,\\
		\gamma_2 &:= \gamma_1+\alpha_1+\alpha_2, & \eta_1 &:= \epsilon_3 + \alpha_6,\\
		\gamma_3 &:= \gamma_2 + \alpha_4+\alpha_5+\alpha_8, & \epsilon_4 &:= \eta_1+\alpha_5,\\
		& & \eta_2 &:= \epsilon_4 + \alpha_8,
	\end{align*}

	and the following subsets of $\Delta^+$
	\begin{align*}
		C_1 &:= \{\beta_1, \alpha_2, \alpha_1\}, & C_8 &:= \{ \delta_1, \alpha_1, \alpha_2\},\\
		C_2 &:= \{ \beta_2, \alpha_8, \alpha_5, \alpha_4, \alpha_3, \alpha_2 \}, & C_9 &:= \{ \delta_2, \alpha_4,\alpha_5,\alpha_6,\alpha_7\},\\
		C_3 &:= \{ \beta_3, \alpha_6,\alpha_5,\alpha_4,\alpha_3\}, & C_{10} &:= \{ \epsilon_1, \alpha_6,\alpha_5,\alpha_4 \},\\
		C_4 &:= \{ \beta_4, \alpha_8,\alpha_5,\alpha_6,\alpha_7 \}, & C_{11} &:= \{ \epsilon_2, \alpha_5, \alpha_4\},\\
		C_5 &:= \{\gamma_1, \alpha_2, \alpha_1\}, & C_{12} &:= \{\epsilon_3, \alpha_6, \alpha_7 \},\\
		C_6 &:= \{\gamma_2, \alpha_4,\alpha_5,\alpha_8\}, & 	C_{13} &:= \{ \eta_1, \alpha_5, \alpha_8\},\\
		C_7 &:= \{ \gamma_3, \alpha_3,\alpha_4,\alpha_5,\alpha_6,\alpha_7\},&C_{14} &:= \{ \epsilon_4, \alpha_7, \alpha_6 \},\\
		& & C_{15} &:= \{ \eta_2, \alpha_7,\alpha_6,\alpha_5,\alpha_4,\alpha_3, \alpha_2, \alpha_1\}.
	\end{align*}
	
	For $i \in \{1, \dots, 15\}$, the triangle $\mathcal{T}_i$ overlaps with either $\Delta_1^+$ or with $\Delta_2^+$. Since the union of all of them equals $\Delta^+$, we conclude that $\Delta^+$ is either Kähler or Killing.
	
\end{proof}

\begin{theorem}
	Every $G_2$-invariant BTP metric on the complex flag manifolds $(G_2/H, J)$ is either K\"ahler or a multiple of the standard metric.
\end{theorem}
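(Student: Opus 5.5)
The plan is to split into the three painted Dynkin diagrams of $G_2$ from Table \ref{tab:expectF4andG2} and, in each case, exhibit $A_n$-like sets and $\beta$-series whose Kähler/Killing sets overlap and cover a representative of every equivalence class in $\Delta_\qfr^+$. Let $\Pi=\{\al,\beta\}$ with $\al$ long and $\beta$ short, so that
$$\Delta^+=\{\al,\ \beta,\ \al+\beta,\ \al+2\beta,\ \al+3\beta,\ 2\al+3\beta\}.$$
Since the coefficients $x_\gamma$ are constant on equivalence classes, any additivity (resp. equality) relation proved on representatives transfers to the whole of $\Delta_\qfr^+$. This is the same principle used for $B_n$ and $D_n$ when a triangle "contains a representative of each equivalence class."

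\emph{Case $\beta\in\Pi_\hfr$, $\al\in\Pi_\qfr$.} The classes are determined by the coefficient of $\al$, which is $1$ or $2$, so $r=2$. The result then follows directly from Theorem \ref{BTP2summands}.

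\emph{Case $\al\in\Pi_\hfr$, $\beta\in\Pi_\qfr$.} The classes are determined by the coefficient of $\beta$:
$$\{\beta,\al+\beta\},\qquad \{\al+2\beta\},\qquad \{\al+3\beta,2\al+3\beta\},$$
so $r=3$. Take the ordered set $C:=\{\beta,\al+\beta,\beta\}$. Consecutive sums give $\al+2\beta\in\Delta^+$, and the total sum is $\al+3\beta\in\Delta^+$. The sum of the first and third elements is $2\beta\notin\Delta$, so $C$ is $A_3$-like. By Lemma \ref{MYLEMMA}, $\tri{C,\qfr}=\{\beta,\al+\beta,\al+2\beta,\al+3\beta\}$ is Kähler or Killing, and it meets all three classes. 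Any triple $\gamma,\delta,\gamma+\delta\in\Delta_\qfr^+$ has $\beta$-coefficients $1+1=2$ or $1+2=3$. Such a triple can therefore be replaced by class representatives $\beta,\beta,\al+2\beta$ or $\beta,\al+2\beta,\al+3\beta$, both of which lie in $\tri{C,\qfr}$. So $\Delta_\qfr^+$ is Kähler or Killing.

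\emph{Full flag $G_2/T$.} Here I would glue three sets.
\begin{enumerate}
\item By Lemma \ref{MyLemmaSeries}, $A:=\{\al,\al+\beta,\al+2\beta,\al+3\beta,\beta\}$, which is $(\mathcal S_{\beta,\al}\cup\{\beta\})\cap\Delta^+$, is Kähler or Killing.
\item The only missing root is $2\al+3\beta$. To reach it, take $C':=\{\al+\beta,\beta,\al+\beta\}$. Consecutive sums give $\al+2\beta$, and the total sum is $2\al+3\beta$. The outer sum $2\al+2\beta$ is not a root, so $C'$ is $A_3$-like. Then $\tri{C'}=\{\al+\beta,\beta,\al+2\beta,2\al+3\beta\}$ is Kähler or Killing by Lemma \ref{MYLEMMA}.
\item Since $\beta,\al+\beta,\al+2\beta\in A\cap\tri{C'}$, the two sets overlap. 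By Lemma \ref{MyLemmaOverlap} they are both Kähler or both Killing.
\end{enumerate}
Finally, every decomposition $\gamma+\delta=\epsilon$ in $\Delta^+$ lies in $A$ or in $\tri{C'}$. The only decompositions of $2\al+3\beta$ are $\al+(\al+3\beta)$ and $(\al+\beta)+(\al+2\beta)$. The second lies in $\tri{C'}$. The first does not lie in either set, so I would cover it with one more check: use $A_2$-like $\{\al,\al+3\beta\}$ together with Lemma \ref{MyLemma2Roots}.
\begin{itemize}
\item Kähler case: $x_{2\al+3\beta}=x_{\al+\beta}+x_{\al+2\beta}=2x_\al+3x_\beta=x_\al+x_{\al+3\beta}$, so this relation holds automatically.
\item Killing case: all of $x_\al,\dots,x_{\al+3\beta},x_{2\al+3\beta}$ are equal, and the relation again holds.
\end{itemize}
Hence $\Delta^+$ is Kähler or Killing.

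In every case, a Kähler $\Delta_\qfr^+$ gives a Kähler metric by the characterization \cite[Proposition 7.7]{Arv}. A Killing $\Delta_\qfr^+$ whose sums connect all classes (which holds in each case above) gives a multiple of $g_o$. The main obstacle I expect is the full-flag case: making sure the chosen $A_3$-like sets really are $A_3$-like, i.e.\ that the non-consecutive sums fail to be roots, and that the two decompositions of the highest root $2\al+3\beta$ are both controlled. That is why I handle the extra triple $\al,\al+3\beta,2\al+3\beta$ explicitly rather than relying on overlap alone.
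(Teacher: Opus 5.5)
Your proof is correct and follows essentially the same route as the paper: the $r=2$ case goes through Theorem \ref{BTP2summands}, and the $r=3$ case uses a K\"ahler-or-Killing set on the four roots $\beta,\al+\beta,\al+2\beta,\al+3\beta$ (the paper gets this set from Lemma \ref{MyLemmaSeries} applied to the series through $\al+\beta$, whereas you get it from Lemma \ref{MYLEMMA}). For the full flag, both proofs overlap the $\beta$-series through $\al$ with one $A_3$-like triangle; the paper's triangle is $\{\al+2\beta,\beta,\al\}$ in your notation, which is all of $\Delta^+$ and already contains both decompositions of $2\al+3\beta$ as concatenations, so your separate check of the triple $\al,\al+3\beta,2\al+3\beta$ is not needed there.
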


\begin{proof}
	Let $\Pi = \{ \al, \beta\}$ be a basis for the root system $\Delta$ of $G_2$ type such that 
	$$\Delta^+ = \{ \al, \beta, \al+\beta, 2\al+\beta, 3\al+\beta, 3\al+2\beta\}.$$
	
	If $\Pi_\qfr = \Pi$, then by applying Lemma \ref{MyLemmaSeries} on $\al, \beta$, we obtain that the set $\sca_{\al, \beta}\cup \{\al \}$ is either K\"ahler or Killing. Define $C := \{2\al+\beta, \al, \beta\}$ which is $A_3$-like, and notice that $\tri{C}$ overlaps with $\mathcal{S}_{\al, \beta} \cup \{ \al\}$. Then $\Delta_\qfr^+ = \tri{C} \cup S_{\al, \beta}$ is either K\"ahler or Killing.
	
	If $\Pi_\qfr = \{\al\}$, observe that $\sca_{\al, \al+\beta}\cup \{\al\}$ is a set which is either K\"ahler or Killing, and that contains a representative of each equivalence class in $\Delta_\qfr^+$. We conclude that $\Delta_\qfr^+$ is either K\"ahler or Killing.
	
	If $\Pi_\qfr = \{ \beta\}$, then the result follows from Theorem \ref{BTP2summands}, since $r(G_2/H) = 2$.
\end{proof}

\begin{theorem}
	Every $F_4$-invariant BTP metric on the complex flag manifolds $(F_4/H, J)$ is either K\"ahler or a multiple of the standard metric.
	
\end{theorem}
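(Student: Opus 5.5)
The plan is the same covering strategy used for $B_n$, $C_n$ and $G_2$. We cover $\Delta_\qfr^+$ (or a set containing a representative of every equivalence class under $\sim$) by subsets already known to be K\"ahler or Killing, chained together by overlaps. Then Lemma \ref{MyLemmaOverlap} propagates a single type, K\"ahler or Killing, to the whole of $\Delta_\qfr^+$. The building blocks are:
\begin{itemize}
\item triangles $\tri{C,\qfr}$ of $A_k$-like sets (Lemma \ref{MYLEMMA});
\item series sets $(\sca_{\beta,\al}\cup\{\beta\})\cap\Delta_\qfr^+$ (Lemma \ref{MyLemmaSeries});
\item the six-element configurations of Lemma \ref{MyLemmaForCn};
\item the complementary positive roots of root subsystems of type $B_3$ and $C_3$ spanned by subsets of $\Pi$, which are K\"ahler or Killing by Theorems \ref{FlagBn} and \ref{FlagCn}.
\end{itemize}

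We use the basis $\Pi=\{\al_1,\al_2,\al_3,\al_4\}$ of Figure \DynkinF-type ordering, with $\al_1,\al_2$ long and $\al_3,\al_4$ short. There are $24$ positive roots.

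\emph{Full flag ($\Pi_\qfr=\Pi$).} $\Pi_1=\{\al_1,\al_2,\al_3\}$ spans a $B_3$ subsystem and $\Pi_2=\{\al_2,\al_3,\al_4\}$ spans a $C_3$ subsystem. By Theorems \ref{FlagBn} and \ref{FlagCn}, $\Delta_1^+$ and $\Delta_2^+$ are each K\"ahler or Killing. They overlap, since both contain $\al_2,\al_3,\al_2+\al_3$, so they have the same type. Their union has $9+9-4=14$ roots. Each of the remaining ten roots involves both $\al_1$ and $\al_4$. We reach them with $A_k$-like chains built in the style of the $E_n$ proof. Examples are $C=\{\al_1+\al_2+\al_3,\al_4\}$-type extensions such as $\{\al_1,\al_2,\al_3,\al_4\}$ (which is $A_4$-like), and sequences of the form $\{\gamma,\al_3,\al_2,\al_1\}$ or $\{\gamma,\al_4,\al_3\}$ with $\gamma$ a large root. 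Each triangle must share a triple $\al,\beta,\al+\beta$ with $\Delta_1^+\cup\Delta_2^+$ or with a previously added triangle. The highest root $2\al_1+3\al_2+4\al_3+2\al_4$ and its neighbours can be handled with series sets through $\al_3$ or $\al_4$.

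\emph{Partial flags.} By Table \ref{tab:expectF4andG2}, the diagrams with $r\le 2$ follow directly from Theorem \ref{BTP2summands}. For every other painted diagram we first replace $\Pi$ by a smaller family of roots, as in the proof of Theorem \ref{FlagCn}. We sum runs of black nodes into adjacent white ones, which gives a set $\Pi'$ that either is a basis of a subsystem of type $B_k$, $C_k$, $A_k$ or $G_2$-like rank two, or whose $A$-like triangles plus a few series sets contain a representative of each of the $r$ classes. We then apply the corresponding earlier theorem, or Lemma \ref{MYLEMMA}, together with overlaps. When a black simple root sits between two whites so that two roots of a triple differ by a root of $\Delta_\hfr$, we use Lemma \ref{MyLemmaForCn} in place of Lemma \ref{MYLEMMA}, exactly as in the $\gamma\in\Pi_\hfr$ case for $C_3$. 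This is done diagram by diagram for the sixteen remaining cases in Table \ref{tab:expectF4andG2}. Painted diagrams related by an automorphism preserving $\Delta_\hfr$ do not collapse here, because $F_4$ has no diagram symmetry.

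The main obstacle is the bookkeeping of the covering, not any new identity. For each diagram, every class must actually be represented in the union of the chosen sets, and each new set must overlap with the growing union in a genuine triple $\al,\beta,\al+\beta$ of complementary roots. Verifying the $A_k$-like condition (sums of non-consecutive elements are not roots) for the chains through the highest part of $\Delta^+$ is the delicate step. I would first try chains mixing long and short roots, such as $\{\gamma,\al_3,\al_2,\al_1\}$, and fall back on Lemma \ref{MyLemmaSeries} with $\beta=\al_3$ or $\al_4$ whenever a string of length three obstructs the $A$-like property.
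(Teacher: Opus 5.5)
Your strategy is the paper's. You cover $\Delta_\qfr^+$ (up to $\sim$) by sets already known to be K\"ahler or Killing: the $B_3$ and $C_3$ subsystems on $\{\al_1,\al_2,\al_3\}$ and $\{\al_2,\al_3,\al_4\}$, triangles of $A_k$-like sets, and the configurations of Lemma \ref{MyLemmaForCn}. You then propagate a single type through overlaps. For $F_4$, however, the whole content of the proof is the explicit covers, and apart from $\Delta_1^+\cup\Delta_2^+$ in the full flag you produce none. ``We reach them with $A_k$-like chains'' and ``this is done diagram by diagram'' are exactly the steps that need proof.

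The one concrete fallback you commit to also fails. For $\theta=2\al_1+3\al_2+4\al_3+2\al_4$, none of $\theta\pm\al_3$, $\theta\pm\al_4$ is a root, and the same holds for $\theta-\al_1=\al_1+3\al_2+4\al_3+2\al_4$. So the $\al_3$- and $\al_4$-series through these roots are singletons and contain no triple. More generally, a series set can only bring in a new root (or a new class) if its string has length at least three. The paper instead reaches the ten roots with nonzero $\al_1$- and $\al_4$-coefficients through four $A_k$-like chains, each overlapping $\Delta_1^+$: $\{\al_1+\al_2,\al_3,\al_4\}$, $\{\al_2+2\al_3+\al_4,\al_1,\al_2,\al_3\}$, $\{\al_2+2\al_3+2\al_4,\al_1,\al_2,\al_3\}$ and $\{\al_1+2\al_2+3\al_3+2\al_4,\al_3,\al_2,\al_1\}$. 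The last one has $\theta$ at the top of its triangle.

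For the partial flags your count is off. $F_4$ has $15$ painted diagrams, and only $\Pi_\qfr=\{\al_1\}$ and $\{\al_4\}$ have $r=2$, so twelve partial flags remain besides the full flag, not sixteen. Also, no $G_2$-type subsystem occurs inside $F_4$.

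More substantively, the $C_n$-style reduction of absorbing black nodes into white ones does not reach all classes here. Take $\Pi_\qfr=\{\al_2,\al_4\}$, where $r=8$. The roots $\al_1+\al_2$ and $\al_3+\al_4$ span a $B_2$ whose positive roots lie only in the classes with $(n_2,n_4)=(1,0),(0,1),(1,1),(1,2)$, missing $(2,0),(2,1),(2,2),(3,2)$. The paper uses the $C_3$ subsystem on $\{\al_2,\al_3,\al_4\}$ together with the triangles of $\{\al_2,\al_1+\al_2+2\al_3,\al_4\}$ and $\{\al_1+2\al_2+3\al_3+\al_4,\al_3+\al_4,\al_2\}$. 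The first of these shares no genuine triple with the other two sets, only a triple of classes: compare $(\al_1+\al_2+2\al_3)+\al_4$ with $(\al_2+2\al_3)+\al_4$. That suffices because $x$ is constant on classes, but your requirement of a genuine shared triple rules it out, so you should relax it.

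Until such a cover is written out and verified for each of the thirteen diagrams, the proposal is a correct plan but not yet a proof. Verification means: every listed element is a root, the $A_k$-like (or Lemma \ref{MyLemmaForCn}) condition holds, the overlaps exist, and every class is represented.
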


\begin{proof}
	Let $\Pi := \{ \al_1, \al_2, \al_3, \al_4 \}$ be a basis for the root system $\Delta$ of $F_4$ type as in Figure \ref{FigureF4}. As in the previous theorems, the proof will follow by showing that $\Delta_\qfr^+$ is either a K\"ahler set or a Killing set. This will be done by providing a list of overlapping subsets so that the union of them all contains a representative of each equivalence class of $\Delta_\qfr^+$ under $\sim$, and such that each subset is known to be either a K\"ahler set or a Killing set, by Lemmas \ref{MYLEMMA}, \ref{MyLemmaForCn} and Theorems \ref{FlagBn}, \ref{FlagCn}. We list out said subsets in the following table. The subsets will be denoted by $\Pi_i$ when they are seen as basis for root systems $\Delta_i \seq \Delta$ (which will always be either of $B_3$ type or $C_3$ type), and denoted by $C_i$ when they are seen as $A_k$-like sets for some $k \in \Nmb$. We leave out the cases $\Pi_\qfr = \{ \al_1\}$ and $\Pi_\qfr = \{\al_4\}$ since they follow from Theorem \ref{BTP2summands}.
	
		\begin{figure}[H]
		\centering
		\begin{tikzpicture}[scale = 1, baseline={(0,-0.1)}]
			\tikzstyle{every node}=[circle, draw, inner sep=1.5pt];
			
			\node (a3) at (0,0) {};
			\node (a4) at (1,0) {};
			\node (a5) at (2,0) {};
			\node (a6) at (3,0) {};
			
			\node[draw = none] at ($(a3.south)+(0,-8pt)$) {$\al_1$};
			\node[draw = none] at ($(a4.south)+(0,-8pt)$) {$\al_2$};
			\node[draw = none] at ($(a5.south)+(0,-8pt)$) {$\al_3$};
			\node[draw = none] at ($(a6.south)+(0,-8pt)$) {$\al_4$};
			
			\draw (a3)--(a4) (a5)--(a6);
			\draw[double distance=2pt] (a4) -- (a5);
			\draw[double distance=2pt,-implies] (a4)-- ($(a5)!0.4!(a4)$); 
		\end{tikzpicture}
		\caption{Dynkin diagram of $F_4$ type}\label{FigureF4}
	\end{figure}

	\begin{table}[H]
	\centering
	\small
	\setlength{\tabcolsep}{3pt} 
	\renewcommand{\arraystretch}{1.4}
	
	\makeatletter
	\newcommand{\lineagruesa}{\noalign{\global\arrayrulewidth=1.5pt}\hline\noalign{\global\arrayrulewidth=0.4pt}}
	\makeatother
	
	\label{tab:listforF4}
		\vspace{-3mm}
			\begin{tabular}{|c |p{10cm}@{}|}
				\lineagruesa
				Painted Dynkin diagram & List of subsets \\
				\lineagruesa
				\DynF{white}{white}{white}{white} &
				
				$\Pi_1 := \{\al_1, \al_2, \al_3\}$, $\Pi_2 := \{\al_2,\al_3, \al_4 \}$, $C_1 := \{\al_1+\al_2, \al_3, \al_4 \}$, \par
				$C_2 := \{\al_2+2\al_3+\al_4,\al_1,\al_2,\al_3 \}$, $C_3 := \{\al_2+2\al_3+2\al_4,\al_1,\al_2,\al_3 \},$ \par
				$C_4 := \{\al_1+2\al_2+3\al_3+2\al_4, \al_3,\al_2,\al_1\}$\\ \hline
				\DynF{white}{white}{white}{black} &
				$\Pi_1 := \{\al_1, \al_2, \al_3 \}$, $C_1 := \{\al_1+\al_2+2\al_3+2\al_4, \al_2,\al_3 \},$ \par
				$C_2 :=\{\al_1+2\al_2+3\al_3+2\al_4, \al_3,\al_2,\al_1 \}.$\\ \hline
				\DynF{white}{white}{black}{white} & $\Pi_1 := \{\al_1,\al_2,\al_3 \}$, $\Pi_2 := \{\al_2,\al_3,\al_4\}$, $C_1 := \{\al_1,\al_2,\al_3,\al_4\},$ \par
				$C_2 :=\{\al_2+2\al_3+2\al_4,\al_1,\al_2\}$, $C_3 := \{\al_2+2\al_3+\al_4,\al_1,\al_2 \}$, \par
				$C_4 := \{\al_1+2\al_2+4\al_3+2\al_4,\al_2,\al_1\}.$\\ \hline
				\DynF{white}{black}{white}{white} & $\Pi_1 := \{\al_1,\al_2,\al_3\}$, $\Pi_2 := \{\al_2,\al_3,\al_4\}$, $C_1 := \{\al_1,\al_2,\al_3,\al_4 \}$, \par $C_2 := \{\al_1+\al_2+\al_3+\al_4, \al_3, \al_4 \}$, $C_3 := \{\al_1+2\al_2+2\al_3,\al_4,\al_3 \},$ \par
				$C_4 := \{\al_1+2\al_2+2\al_3+2\al_4, \al_3,\al_2+\al_3 \},$ \par
				$C_5 := \{\al_1+2\al_2+3\al_3+2\al_4,\al_2+\al_3,\al_1 \}.$\\ \hline
				\DynF{black}{white}{white}{white} & 
				$\Pi_1 := \{\al_1, \al_2,\al_3\}$,
				$\Pi_2 :=\{\al_2,\al_3,\al_4\}$, $C_1 := \{\al_1+2\al_2+2\al_3, \al_4,\al_3\},$ \par
				$C_2 := \{\al_1+\al_2+2\al_3+2\al_4, \al_2,\al_3\}$, $C_3 := \{\al_1+2\al_2+3\al_3+2\al_4, \al_3,\al_2\}.$ \\ \hline
				\DynF{white}{white}{black}{black} & 
				$\Pi_1 := \{\al_1,\al_2,\al_3\}$, $C_1 := \{\al_1+2\al_2+4\al_3+2\al_4,\al_2,\al_1 \}.$\\ \hline
				\DynF{white}{black}{white}{black} &
				$\Pi_1 := \{\al_1, \al_2, \al_3\}$, $C_1 := \{\al_2+2\al_3+2\al_4, \al_1+\al_2, \al_3 \},$ \par
				$C_2 := \{\al_1+2\al_2+3\al_3+2\al_4, \al_2+\al_3,\al_1\}.$ \\ \hline
				\DynF{black}{white}{white}{black} & 
				$\Pi_1 := \{\al_1,\al_2,\al_3\}$, $C_1 := \{\al_2+2\al_3+ 2\al_4,\al_1+\al_2,\al_3\},$ \par
				$C_2 := \{\al_1+2\al_2+3\al_3+2\al_4,\al_3,\al_2 \}.$ \\ \hline
				\DynF{white}{black}{black}{white} & 
				$C_1 := \{ \al_1,\al_2+\al_3+\al_4,\al_3+\al_4\},$ \par
				$C_2 := \{\al_1+2\al_2+3\al_3+\al_4,\al_2+\al_3+\al_4,\al_1\}.$ \\ \hline
				\DynF{black}{white}{black}{white} & $\Pi_1 := \{\al_2,\al_3,\al_4\}$, $C_1 := \{\al_2,\al_1+\al_2+2\al_3,\al_4 \},$ \par
				$C_2 := \{\al_1+2\al_2+3\al_3+\al_4,\al_3+\al_4,\al_2\}.$ \\ \hline
				\DynF{black}{black}{white}{white} & 
				$\Pi_1 := \{ \al_2,\al_3,\al_4\}$, $C_1 := \{\al_1+2\al_2+2\al_3,\al_4,\al_3\},$ \par 
				$C_2 := \{\al_1+2\al_2+3\al_3+\al_4,\al_4,\al_3\}.$ \\ \hline
				\DynF{black}{white}{black}{black} & 
				$C_1 := \{\al_2,\al_1+\al_2+2\al_3+2\al_4,\al_2+2\al_3 \}.$ \\ \hline
				\DynF{black}{black}{white}{black} &
				$C_1 := \{\al_1+\al_2+\al_3,\al_3+\al_4,\al_2+\al_3+\al_4 \}$\footnotemark,
				\par
				$C_2 := \{\al_3,\al_1+2\al_2+2\al_3+2\al_4,\al_3\}.$ \\ \hline
			\end{tabular}
		\end{table}

	\footnotetext{This set is not $A_3$-like, but it produces a triangle in the sense of Lemma \ref{MyLemmaForCn}.}
	
	In any case, all equivalence classes in $\Delta_\qfr^+$ have nontrivial intersection with the union of the respective complementary roots $(\Delta_i)^+_\qfr : = \Delta_i^+ \cap \Delta_\qfr$ corresponding to $\Pi_i$ and the triangles $\tri{C_i , \qfr}$. Hence we conclude that $\Delta_\qfr^+$ is either a K\"ahler set or a Killing set.
\end{proof}

\newpage
\bibliographystyle{plain}
\bibliography{References}

\end{document}